\theoremstyle{definition}
\newtheorem{thm}{Theorem}[section]
\newtheorem{defn}[thm]{Definition}
\newtheorem{ex}[thm]{Example}
\newtheorem{rem}[thm]{Remark}
\newtheorem{prop}[thm]{Proposition}
\newtheorem{cor}[thm]{Corollary}
\newtheorem{lem}[thm]{Lemma}
\newtheorem{open}[thm]{Open Problem}
\colorlet{green}{OliveGreen}
\tikzset{
    graph/.style={thick, red}
}
\newcommand{\SP}{\mathit{SP}}
\newcommand{\Z}{\mathbb{Z}}
\newcommand{\C}{\mathbb{C}}
\newcommand{\N}{\mathbb{N}}
\newcommand{\R}{\mathbb{R}}
\newcommand{\Q}{\mathbb{Q}}
\DeclareMathOperator{\id}{id}
\DeclareMathOperator{\im}{im}
\begin{document}

\title{
At most $n$-valued  maps }


\author{DACIBERG~LIMA~GON\c{C}ALVES\\
Departamento de Matem\'atica - IME-USP,\\
Rua do Mat\~ao 1010, Butant\~a\\
CEP:~05508-090 - S\~ao Paulo - SP - Brazil.\\
e-mail:~\url{dlgoncal@ime.usp.br}\vspace*{4mm}\\
 ROBERT SKIBA \\
 Faculty of Mathematics and Computer Science\\
Nicolaus Copernicus University in Toru\'n\\
Chopina 12/18\\
87-100 Toru\'n\\
Poland\\
E-mail: robert.skiba@mat.umk.pl\vspace*{4mm}\\
P. CHRISTOPHER  STAECKER\\
Department of Mathematics, Fairfield University,\\
Fairfield, 06823-5195, CT, USA.\\
e-mail:~\url{
cstaecker@fairfield.edu}
}

\date{\today}

\begingroup
\renewcommand{\thefootnote}{}
\footnotetext{Classification (MSC2020):  (Primary) 54C60; (Secondary) 55R80, 57M05, 55P42. }  
\endgroup

\maketitle

\begin{abstract}
This paper concerns various models of ``at-most-$n$-valued maps''. That is, multivalued maps $f:X\multimap Y$ for which $f(x)$ has cardinality at most $n$ for each $x$. We consider 4 classes of such maps which have appeared in the literature: $\mathcal U$, the set of exactly $n$-valued maps, or unions of such; $\mathcal F$, the set of $n$-fold maps defined by Crabb; $\mathcal S$, the set of symmetric product maps; and $\mathcal W$, the set of weighted maps with weights in $\mathbb N$. Our main result is roughly that these classes satisfy the following containments:
\[ \mathcal U \subsetneq \mathcal F \subsetneq \mathcal S = \mathcal W \]

Furthermore we define the general class $\mathcal C$ of all at-most-$n$-valued maps, and show that there are maps in $\mathcal C$ which are outside of any of the other classes above. We also describe a configuration-space point of view for the class $\mathcal C$, defining a configuration space $C_n(Y)$ such that any at-most-$n$-valued map $f:X\multimap Y$ corresponds naturally to a single-valued map $f:X\to C_n(Y)$. We give a full calculation of the fundamental group and homology groups of $C_n(S^1)$.
\end{abstract}

\section{Introduction}
Multivalued maps are an important and broad subject. Many problems and applications have motivated the study of certain types of multivalued maps. Notably, for a fixed positive integer $n$ we have the so called $n$-valued maps
$X \multimap Y$, which consist of the multivalued maps where the image of a point of the domain is a set of cardinality exactly $n$. We also refer to these as \emph{equicardinal} maps. Such maps have been intensively studied and typical topological techniques have been successfully used, including a configuration space viewpoint which views an $n$-valued multimap $X\multimap Y$ as a single-valued map from $X$ into the unordered configuration space of $n$ points on $Y$.

This paper concerns the slightly more general class of so called at-most-$n$-valued maps
$X \multimap Y$, the multivalued maps where the image of a point of the domain is a set of cardinality at most $n$. To exemplify such maps consider $p:\hat X \to X$
a branched covering with $n$-sheets.  The map $x \to p^{-1}(x)$ is at-most-$n$-valued, and is $n$-valued if and only if $p$ is a covering. Another important example is the multimap from the space of all complex polynomials of degree $n$ into the complex numbers $\mathbb{C}$ which associates to each polynomial the set of its roots. This is an at-most-$n$-valued map.

In order to study some natural questions, many mirrored by results of $n$-valued maps, we initially study a suitable configuration space where single valued maps into this space will correspond to at-most-$n$-valued maps. In existing literature there are other closely related classes of at-most-$n$-valued maps. We consider the following four families of maps:
\begin{itemize}
\item $\mathcal{U}$, the set of unions of equicardinal maps
\item $\mathcal{F}$, the set of $n$-fold maps as defined by Crabb
\item $\mathcal{W}$, the set of $\N$-weighted maps
\item $\mathcal{S}$, the set of symmetric product maps.
\end{itemize}

It is natural and useful to explore the relations among these families, including the general family of all at-most-$n$-valued maps. Questions concerning one family may potentially be illuminated by using known results from another family. The families above have been already considered for some time, except the family $\mathcal{F}$
which has appeared more recently.

This paper studies several aspects of such multivalued maps. First we define a suitable configuration space for the at-most-$n$-valued maps. Then we give general relations between the various families listed above. We also prove a general statement about a large class of weighted maps which can be realized as sums of single-valued maps. Finally, we calculate the homology and homotopy groups for the at-most-$n$-valued configuration space of the circle $S^1$.

Multivalued maps play a significant role in both general topology and algebraic topology. Surprisingly, they can also be effectively applied to studying the existence of periodic trajectories in ordinary differential equations (examples of such applications can be found in \cite{Conti,Pejs3,skiba}).

\section{A configuration space for at-most-$n$-valued \linebreak maps}
Given sets $X$ and $Y$ and a positive integer $n$, an $n$-valued function from $X$ to $Y$ is a set-valued function $f:X\multimap Y$ such that $f(x)\subseteq Y$ has cardinality exactly $n$ for every $x\in X$. 

We will often wish to avoid naming a specific natural number $n$ when discussing these maps, and use the term \emph{equicardinal}.
\begin{defn}
A set-valued map $f:X\multimap Y$ is \emph{equicardinal} if there is some natural number $n>0$ such that $f$ is $n$-valued.
\end{defn}

For equicardinal maps there is a well studied configuration space point of view from \cite{BrGo}. Let $D_n(Y)$ be the \emph{unordered configuration space of $n$ points in $Y$}, defined as:
\[ D_n(Y) = \{ \{y_1,\dots,y_n\} \mid y_i \in Y, y_i \neq y_j \text{ for } i\neq j \}. \]

When $Y$ is a topological space, we give $D_n(Y)$ a topology as follows: begin with the product topology on $Y^n$, then consider the subspace $F_n(Y)$ of tuples $(y_1,\dots,y_n)\in Y^n$ with $y_i \neq y_j$ for $i\neq j$. This is the \emph{ordered configuration space}. Then $D_n(Y)$ is the quotient of $F_n(Y)$ up to ordering, and so its topology is given by the quotient topology. It is shown in \cite{BrGo} that every upper and lower semicontinuous map $f:X\multimap Y$ naturally corresponds to a continuous single-valued map $F:X\to D_n(Y)$.

Recall that a multivalued function $f : X \multimap Y$ is \emph{lower semi-continuous} if $V$ an open subset of $Y$ implies
that the set $\{x \in  X\mid f(x)\cap V\ne \emptyset\}$ is open in $X$. This function $f$ is \emph{upper semi-continuous} if $V$ open in $Y$ implies that the set $\{x \in X \mid f(x) \subset V\}$ is open in $X$.

Our main focus for this paper is the following more general class of maps:
\begin{defn}
Given spaces $X, Y$ and some natural number $n>0$, we say $f:X\multimap Y$ is an \emph{at-most-$n$-valued map} when it is both lower and upper semicontinuous and $f(x)\subset Y$ has nonzero cardinality at most $n$ for each $x\in X$.
\end{defn}

Proposition 2.1 of \cite{BrGo} shows that, for an $n$-valued function with Hausdorff codomain, lower semicontinuity implies upper semicontinuity. We obtain the same result for at-most-$n$-valued functions:
\begin{thm}\label{lowerisupper}
Let $Y$ be Hausdorff, and let $f:X\multimap Y$ be at-most-$n$-valued and lower semicontinuous. Then $f$ is also upper semicontinuous.
\end{thm}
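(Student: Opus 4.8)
The plan is to prove the contrapositive locally: to show upper semicontinuity, I must show that for each open $V\subseteq Y$, the set $U_V = \{x \in X \mid f(x)\subset V\}$ is open. So I fix a point $x_0 \in U_V$ and try to produce an open neighborhood $N$ of $x_0$ with $N\subseteq U_V$. The key structural fact I would exploit is that the cardinality of $f(x)$ varies by at most $n$ and takes integer values, so near any point the cardinality can only stay the same or drop; lower semicontinuity should prevent it from dropping in a way that escapes $V$.

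**First I would** set $k = |f(x_0)|\le n$ and write $f(x_0) = \{y_1,\dots,y_k\}$, all contained in $V$. Using that $Y$ is Hausdorff, I would separate these $k$ points by pairwise disjoint open sets $W_1,\dots,W_k$ with $y_i \in W_i$, and shrink each so that $W_i \subseteq V$. Now lower semicontinuity applied to each $W_i$ gives that $\{x \mid f(x)\cap W_i \neq \emptyset\}$ is open and contains $x_0$; intersecting these over $i=1,\dots,k$ produces an open neighborhood $N_0$ of $x_0$ on which $f(x)$ meets every $W_i$. Since the $W_i$ are disjoint, for $x\in N_0$ the set $f(x)$ already contains at least $k$ distinct points, one in each $W_i$.

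**The crux** is then to rule out ``extra'' points of $f(x)$ lying outside $V$ for $x$ near $x_0$. Here I would use the cardinality bound: because $|f(x)|\le n$ and $f(x)$ already has at least $k$ points inside $\bigcup W_i \subseteq V$, any stray point would push the count toward $n$. The clean way to forbid strays is to apply lower semicontinuity to the open set $Y\setminus \overline{f(x_0)}$ or, more carefully, to an open set containing a hypothetical stray point; but since we do not know where strays could appear, the better route is to argue that if $x_0\in U_V$ fails to be interior, there is a net $x_\alpha \to x_0$ with points $z_\alpha \in f(x_\alpha)\setminus V$, and then extract a contradiction with the upper bound on cardinality combined with the $k$ guaranteed points in the disjoint $W_i$.

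**The hard part will be** handling the stray points without compactness of $Y$ and without assuming $Y$ is metric, since lower semicontinuity alone does not directly control points of $f(x)$ that wander. I expect the resolution to mirror Proposition 2.1 of \cite{BrGo}: one shows that the ``extra'' multiplicity $|f(x)| - k$ can be absorbed by observing that each persistent stray point, being outside the closed complement structure, would itself force an open region meeting $f$ near $x_0$ and thereby raise the local cardinality above $n$, contradicting the at-most-$n$ bound. Formalizing this counting argument — that you cannot simultaneously keep $k$ points trapped in $V$ and maintain an escaping point while respecting $|f(x)|\le n$ — is where the real work lies, and the Hausdorff hypothesis is exactly what makes the separation into disjoint $W_i$ (and hence the count) valid.
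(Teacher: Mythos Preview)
Your setup mirrors the paper's proof exactly: separate $f(x_0)=\{y_1,\dots,y_k\}$ by pairwise disjoint open sets $W_i\subseteq V$, apply lower semicontinuity to each $W_i$, and intersect to obtain an open neighbourhood $N_0$ of $x_0$ on which $f$ meets every $W_i$. At that point the paper simply asserts that this intersection lies inside $U_V$ and stops; it never addresses the ``stray point'' issue you correctly isolate as the crux.

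Your instinct that this is the hard part is right, and in fact the resolution you sketch cannot be completed: the counting argument forces $f(a)\subseteq V$ only when $k=n$, which is precisely the exactly-$n$-valued situation of \cite{BrGo}. For $k<n$ the claim is false. Take $f:\R\multimap\R$ with $f(0)=\{0\}$ and $f(x)=\{0,1\}$ for $x\neq 0$. This is at-most-$2$-valued and lower semicontinuous (the set $\{x:f(x)\cap V\neq\emptyset\}$ is always one of $\R$, $\R\setminus\{0\}$, or $\emptyset$), yet with $V=(-\tfrac12,\tfrac12)$ we get $\{x:f(x)\subset V\}=\{0\}$, which is not open. Your proposed ``persistent stray forces cardinality above $n$'' mechanism does not trigger here: the stray value $1$ simply vanishes at $x_0$ without violating lower semicontinuity, since lower semicontinuity controls where $f$ \emph{does} meet an open set, not where it fails to. So the gap you flagged is genuine and is shared, unacknowledged, by the paper's own argument.
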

\begin{proof}
We use essentially the same proof as in \cite{BrGo}. Let $V\subset Y$ be an open set, and we will show that the set $U = \{x\in X \mid f(x)\subset V\}$ is open in $X$. Take some $x\in U$, and we will construct a neighborhood of $x$ contained in $U$.

Let $f(x) = \{y_1,\dots, y_k\}$ for $k\le n$. Since $Y$ is Hausdorff, there are disjoint open sets $V_i \subset V$ with $y_i\in V_i$ for each $i$. Since $V_i$ is open and $f$ is lower semicontinuous, the following sets $W_i$ are open for each $i\in \{1,\dots,k\}$:
\[ W_i = \{a \in X \mid f(a) \cap V_i \neq \emptyset \}, \]
and $x \in W_i$ for each $i$.

Let $W = \bigcap W_i$, so that $W$ is an open set containing $x$, and $W \subseteq U$. Thus $W$ is the desired open neighborhood of $x$ contained in $U$.
\end{proof}

The converse to the theorem above is not generally true for at-most-$n$-valued maps, or even for equicardinal maps.
Let $f:\R \multimap \R$ be the 2-valued function given by:
\[ f(x) = \begin{cases} \{0,-x\} &$ if $ x < 0, \\
\{0,1\} &$ if $x\ge 0. \end{cases} \]
See Figure \ref{uscnotlscfig}. This is a 2-valued function which is upper semicontinuous but not lower semicontinuous.
\begin{figure}
\[
\begin{tikzpicture}[scale=1]
\draw (-2,0) -- (2,0);
\draw (0,-.1) -- (0,2);

\draw[red,thick] (-2,0) -- (2,0);
\draw[red,thick] (-2,2) -- (0,0);
\draw[red,thick] (0,1) -- (2,1);
\filldraw[red] (0,1) circle (.05cm);
\end{tikzpicture}
\]
\caption{The graph of a 2-valued function $f:\R \multimap \R$ which is upper semicontinuous but not lower semicontinuous.\label{uscnotlscfig}}
\end{figure}
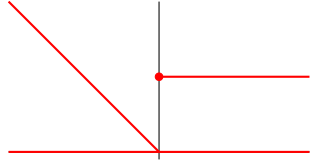

We will see later in Lemma \ref{conti-N} that a converse to Theorem \ref{lowerisupper} does hold for weighted maps: that is, if a weighted map is assumed only to be upper semicontinuous, then it is automatically lower semicontinuous.

In order to construct a configuration-space viewpoint for at-most-$n$-valued maps, we make the following definition:
\begin{defn}
For a space $X$ and some natural number $n>0$, let $C_n(X)$ be the set of all nonempty subsets of $X$ having cardinality at most $n$, topologized as the quotient of $X^n$ under the map $p:X^n \to C_n(X)$ which takes a tuple $(x_1,\dots,x_n)$ to the set $\{x_1,\dots,x_n\}$. We call $C_n(X)$ the \emph{[unordered] configuration space of at most $n$ points on $X$}.
\end{defn}

This space $C_n(X)$ is defined as a quotient space of $X^n$. If we further assume that $X$ is a metric space, since $C_n(X)$ is a space of subsets, we may also consider the topology on $C_n(X)$ induced by the Hausdorff metric.

We review some details about the Hausdorff metric. Given a metric space $(X,d)$ and a point $x\in X$ and a finite subset $A\subset X$, let $d(x,A) = \min_{a\in A} d(x,a)$. Given two finite subsets $A$ and $B$, the Hausdorff distance from $A$ to $B$, denoted $d_H(A,B)$, is the maximum distance from any point of $A$ to its nearest point in $B$, or from any point of $B$ to its nearest point in $A$. Specifically:
\[ d_H(A,B) = \max\left\{\max_{a\in A} d(a,B), \max_{b\in B} d(A,b)\right\}. \]
This function $d_H$ is a metric on the space of all finite subsets of $X$, called the Hausdorff metric.

We will see that the topology on $C_n(X)$ as a quotient of $X^n$ is the same as the topology on $C_n(X)$ induced by the Hausdorff metric. Towards that end, we prove a lemma about open balls in the Hausdorff metric on $C_n(X)$.
\begin{lem}\label{hausdorffball}
Let $X$ be a metric space, and let $d_H$ denote the Hausdorff metric on $C_n(X)$.
Let $x = p(x_1,\dots,x_n) \in C_n(X)$, and let $\epsilon > 0$. Then:
\[ B_H(x,\epsilon) = p(B(x_1,\epsilon) \times \dots \times B(x_n,\epsilon)) \cap \bigcap_i p(B(x_i,\epsilon) \times X \times \dots \times X), \]
where $B_H(x,\epsilon)$ denotes the open $d_H$-ball of radius $\epsilon$ about $x$ in $C_n(X)$, and $B(x_i,\epsilon)$ is the open ball of radius $\epsilon$ in $X$.
\end{lem}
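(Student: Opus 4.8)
The plan is to prove the set equality by the usual double inclusion, after first unwinding the Hausdorff distance into two separate ``covering'' conditions. Writing $x=\{x_1,\dots,x_n\}$ and unravelling the definition of $d_H$, a subset $A\in C_n(X)$ lies in $B_H(x,\epsilon)$ precisely when both of the following hold: (I) every $x_i$ has a point of $A$ within distance $\epsilon$, i.e.\ $A\cap B(x_i,\epsilon)\neq\emptyset$ for each $i$, coming from the term $\max_{a\in x}d(a,A)<\epsilon$; and (II) every point of $A$ lies within $\epsilon$ of some $x_i$, i.e.\ $A\subseteq\bigcup_i B(x_i,\epsilon)$, coming from the term $\max_{b\in A}d(x,b)<\epsilon$. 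This reformulation is routine and will be the backbone of the argument.

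Next I would match each factor on the right-hand side to one of these conditions. For the second factor I claim that $A\in p(B(x_i,\epsilon)\times X\times\dots\times X)$ if and only if $A\cap B(x_i,\epsilon)\neq\emptyset$: the forward direction is immediate since the first coordinate of any representative lies in $A\cap B(x_i,\epsilon)$, and for the converse one picks a point of $A$ in $B(x_i,\epsilon)$ as the first coordinate and, using $|A|\le n$, fills the remaining $n-1$ coordinates with the other points of $A$, repeating as necessary. Intersecting over $i$ then identifies $\bigcap_i p(B(x_i,\epsilon)\times X\times\dots\times X)$ exactly with the set of $A$ satisfying (I). For the first factor, any representative $(a_1,\dots,a_n)$ with $a_i\in B(x_i,\epsilon)$ immediately witnesses both (I), since each $a_i\in A\cap B(x_i,\epsilon)$, and (II), since each point of $A$ is some $a_i$ and hence within $\epsilon$ of $x_i$. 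This settles the inclusion of the right-hand side into $B_H(x,\epsilon)$.

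The substance of the lemma is the reverse inclusion: given $A$ satisfying (I) and (II), I must produce a single representative tuple $(a_1,\dots,a_n)\in B(x_1,\epsilon)\times\dots\times B(x_n,\epsilon)$ whose underlying set is exactly $A$. I would model this as a bipartite incidence between the $n$ ``slots'' $\{1,\dots,n\}$ and the points of $A$, joining slot $i$ to a point $b\in A$ whenever $b\in B(x_i,\epsilon)$. Condition (I) says no slot is isolated, condition (II) says no point of $A$ is isolated, and $|A|\le n$ gives at least as many slots as points. What is needed is a surjective, incidence-respecting assignment of a point of $A$ to each slot, which exists precisely when there is a matching saturating the points of $A$; one would then fill any leftover slots arbitrarily using (I).

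The hard part will be exactly this last combinatorial step. The delicate case is when the coordinates $x_i$ are not distinct, or when $|A|<n$, so that several slots compete for the same points of $A$ and the multiplicities with which each ball $B(x_i,\epsilon)$ is available must be tracked carefully. Verifying a Hall-type condition for the saturating matching, rather than merely (I) and (II) pointwise, is where the real work lies, and it is the part of the argument most sensitive to how the representative tuple is chosen; I expect this to be the main obstacle and the place where the hypothesis $|A|\le n$ together with (I) must be combined most carefully.
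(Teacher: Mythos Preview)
Your decomposition into conditions (I) and (II) and your identification of the second factor with (I) are exactly what the paper does; the paper then asserts, with no further argument, that
\[
\{\,z\in C_n(X)\mid z\subseteq \textstyle\bigcup_i B(x_i,\epsilon)\,\}=p\bigl(B(x_1,\epsilon)\times\dots\times B(x_n,\epsilon)\bigr),
\]
which is precisely the matching step you flag as the hard part. So your approach is the paper's approach, but you have correctly isolated a gap that the paper passes over in silence.

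Your instinct that a Hall-type condition is needed---and that (I) and (II) alone may not provide it---is correct, and in fact the lemma as stated is false. Take $X=\mathbb{R}$, $n=3$, the distinct representatives $x_1=0$, $x_2=1$, $x_3=100$, and $\epsilon=10$. Let $z=\{0.5,\,95,\,105\}$. Then $d_H(x,z)=5<10$, so $z\in B_H(x,\epsilon)$; and one checks easily that $z$ lies in each $p(B(x_i,\epsilon)\times X\times X)$. But $z\notin p\bigl(B(0,10)\times B(1,10)\times B(100,10)\bigr)$: the only element of $z$ lying in $B(0,10)\cup B(1,10)$ is $0.5$, so any tuple $(a_1,a_2,a_3)$ with $a_1\in B(0,10)$ and $a_2\in B(1,10)$ must have $a_1=a_2=0.5$, and then $\{a_1,a_2,a_3\}$ can contain at most one of $95,105$. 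Thus the right-hand side is strictly smaller than $B_H(x,\epsilon)$. The failure is exactly the one you anticipated: two of the balls overlap and only one point of $z$ serves them both, while the remaining ball must absorb two points of $z$ with only one slot. The paper's intermediate claim (that the first set equals the product image) is already false for $z=\{95,105\}$ in this same example, so the error is not repaired by the intersection with the second factor.
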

\begin{proof}
We begin by showing that $B_H(x,\epsilon) = A$, where $A$ is the following set:
\[ A = \left\{ z\in C_n(X) \mid z \subset \bigcup_{i} B(x_i,\epsilon)  \text{ and } z \cap B(x_i,\epsilon) \neq \emptyset \text{ for each $i$}  \right\}. \]

Take $y = p(y_1,\dots,y_n) \in B_H(x,\epsilon)$, so that $d_H(x,y) < \epsilon$. By the definition of $d_H$, this means that both $\max_{x_i\in x} d(x_i,y)$ and $\max_{y_i\in y} d(x,y_i)$ are less than $\epsilon$. The first statement, that $\max_{x_i\in x} d(x_i,y) <\epsilon$, means that each point of $x$ is within $\epsilon$ of some point of $y$. That is, $y \cap B(x_i,\epsilon) \neq \emptyset$ for each $i$. The second statement, that $\max_{y_i\in y} d(x,y_i)< \epsilon$, means that each point of $y$ is within $\epsilon$ of some point of $x$. That is, $y \subset \bigcup_{i} B(x_i,\epsilon)$. Thus $y\in B_Y(x,\epsilon)$ is equivalent to $y \in A$ as desired.

Writing $A$ as an intersection of two sets, so far we have shown that:
\[
B_H(x,\epsilon) = \left\{ z \in C_n(X) \mid z \subset \bigcup_i B(x_i,\epsilon) \right\} \cap \{ z \mid z \cap B(x_i,\epsilon) \neq \emptyset \text{ for each $i$}\}.
\]

The first set above equals $p(B(x_1,\epsilon) \times \dots \times B(x_n,\epsilon))$, and the second set can be expressed as:
\[ \bigcap_i \{ z\mid z \cap B(x_i,\epsilon) \neq \emptyset\} = \bigcap_i p(B(x_i,\epsilon) \times X \times \dots \times X), \]
giving the desired formula.
\end{proof}

Now we are ready to show that the quotient topology on $C_n(X)$ agrees with the topology given by the Hausdorff metric.
\begin{thm}
Let $X$ be a metric space. Then the topology on $C_n(X)$ as a quotient of $X^n$ is equivalent to the topology on $C_n(X)$ given by the Hausdorff metric.
\end{thm}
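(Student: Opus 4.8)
The plan is to show the two topologies coincide by proving each is contained in the other. Write $\tau_q$ for the quotient topology on $C_n(X)$ coming from $p:X^n\to C_n(X)$, and $\tau_H$ for the topology induced by the Hausdorff metric $d_H$. Since both are topologies on the same underlying set, it suffices to establish the two inclusions $\tau_H\subseteq\tau_q$ and $\tau_q\subseteq\tau_H$.

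For $\tau_H\subseteq\tau_q$, I would show that $p:X^n\to(C_n(X),d_H)$ is continuous; because $\tau_q$ is by definition the finest topology making $p$ continuous, continuity of $p$ into the metric space forces every $d_H$-open set $V$ to have $p^{-1}(V)$ open, hence to be $\tau_q$-open. To verify continuity, fix $(x_1,\dots,x_n)\in X^n$ and $\epsilon>0$, and consider the basic product neighborhood $N=B(x_1,\epsilon)\times\dots\times B(x_n,\epsilon)$. If $(y_1,\dots,y_n)\in N$ then $d(x_i,y_i)<\epsilon$ for every $i$; reading off the definition of $d_H$, each $x_i$ lies within $\epsilon$ of $y_i\in\{y_1,\dots,y_n\}$ and each $y_i$ lies within $\epsilon$ of $x_i\in\{x_1,\dots,x_n\}$, so $d_H(p(x_1,\dots,x_n),p(y_1,\dots,y_n))<\epsilon$. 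Hence $p(N)\subseteq B_H(p(x_1,\dots,x_n),\epsilon)$, which gives continuity of $p$.

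For the reverse inclusion $\tau_q\subseteq\tau_H$, the key tool is Lemma \ref{hausdorffball}. Let $U\in\tau_q$ and take $x=p(x_1,\dots,x_n)\in U$, choosing a representative tuple $(x_1,\dots,x_n)\in p^{-1}(U)$ (whose coordinates may repeat, which is harmless). Since $p^{-1}(U)$ is open in the product topology, it contains a basic box around $(x_1,\dots,x_n)$, and shrinking to a common radius yields a single $\epsilon>0$ with $B(x_1,\epsilon)\times\dots\times B(x_n,\epsilon)\subseteq p^{-1}(U)$. Lemma \ref{hausdorffball} then gives $B_H(x,\epsilon)\subseteq p\bigl(B(x_1,\epsilon)\times\dots\times B(x_n,\epsilon)\bigr)$, and the right-hand side is contained in $p(p^{-1}(U))=U$ by surjectivity of $p$. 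Thus every point of $U$ has a $d_H$-ball inside $U$, so $U\in\tau_H$.

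The theorem then follows by combining the two inclusions. I expect the first direction (continuity of $p$) to be routine; the genuine content sits in the second direction, where one must be sure that the quotient, which a priori could be strictly finer than any metric topology, is in fact no finer than $\tau_H$. All the difficulty there has been front-loaded into Lemma \ref{hausdorffball}, whose formula for $B_H(x,\epsilon)$ is precisely what converts ``openness of $p^{-1}(U)$ in the product'' into ``$d_H$-openness of $U$''. The only mild subtlety to monitor is the case of tuples with repeated coordinates (so $x$ has fewer than $n$ distinct points), but since Lemma \ref{hausdorffball} is stated for an arbitrary representative, this needs no special treatment.
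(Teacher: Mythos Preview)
Your proof is correct and follows essentially the same strategy as the paper: establish both inclusions, with the substantive direction $\tau_q\subseteq\tau_H$ handled via Lemma~\ref{hausdorffball} exactly as the paper does. The only difference is in the easy direction $\tau_H\subseteq\tau_q$: the paper invokes Lemma~\ref{hausdorffball} again (reading the formula as expressing $B_H(x,\epsilon)$ as a finite intersection of quotient-open sets), whereas you instead verify directly that $p:X^n\to(C_n(X),d_H)$ is continuous and appeal to the universal property of the quotient topology. Your route is arguably cleaner here, since it avoids needing to check that sets of the form $p(B(x_1,\epsilon)\times\cdots\times B(x_n,\epsilon))$ are open in the quotient topology; similarly, in the second direction you work with $p^{-1}(U)$ directly rather than invoking a ``basis neighborhood'' in $C_n(X)$, which sidesteps any implicit assumption that $p$ is an open map.
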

\begin{proof}
First we argue that an open ball in the Hausdorff metric is also open using the quotient topology. This follows immediately from Lemma \ref{hausdorffball}, which expresses an open ball in the Hausdorff metric as a finite intersection of sets which are open in the quotient topology.

Now for the converse, we take a subset $U \subset C_n(X)$ which is open in the quotient topology, and we will show that it is also open in the Hausdorff metric. Take some $x\in U$, and we will find a $d_H$-ball around $x$ contained in $U$. Let $x = p(x_1,\dots,x_n)$, and since $x\in U$, there is a basis neighborhood containing $x$ of the form:
\[ V = p(B(x_1,\epsilon) \times \dots \times B(x_1,\epsilon)) \subseteq U. \]
Then by Lemma \ref{hausdorffball} we have $B_H(x,\epsilon) \subset V \subset U$. Thus $B_H(x,\epsilon)$ is the desired neighborhood of $x$ contained in $U$, and so $U$ is open.
\end{proof}

In the remainder of the section we complete the description of $C_n(X)$ as a configuration space for at-most-$n$-valued maps by showing that every at-most-$n$-valued map $f:X\multimap Y$ corresponds naturally to a single-valued map $F:X\to C_n(Y)$ given by $F(x) = f(x) \subset Y$.

\begin{lem}
Let $Y$ be Hausdorff, and let $f:X\multimap Y$ be upper semicontinuous and at-most-$n$-valued. Then the corresponding function $F:X\to C_n(Y)$ is continuous.
\end{lem}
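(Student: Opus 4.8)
The plan is to work directly with the quotient topology on $C_n(Y)$, since $Y$ is only assumed Hausdorff (not metric), so the Hausdorff-metric description is unavailable here. To prove $F$ is continuous I will show that $F^{-1}(U)$ is open for every open $U \subseteq C_n(Y)$, by producing, around an arbitrary point $x_0 \in F^{-1}(U)$, a neighborhood contained in $F^{-1}(U)$. Write $F(x_0) = f(x_0) = \{y_1,\ldots,y_k\}$ with $k \le n$ and the $y_i$ distinct. The key device will be the \emph{Vietoris-type} subsets of $C_n(Y)$: given open sets $V_1,\ldots,V_k \subseteq Y$, set
\[ \langle V_1,\ldots,V_k\rangle = \Bigl\{\, z \in C_n(Y) : z \subseteq \textstyle\bigcup_i V_i \text{ and } z \cap V_i \neq \emptyset \text{ for each } i \,\Bigr\}. \]
A short computation with $p^{-1}$ shows each such set is open in the quotient topology: writing $\pi_j:Y^n\to Y$ for the $j$-th coordinate projection, the condition $z \subseteq \bigcup_i V_i$ pulls back to the open box $(\bigcup_i V_i)^n$, and each condition $z \cap V_i \neq \emptyset$ pulls back to $\bigcup_j \pi_j^{-1}(V_i)$, so $p^{-1}\langle V_1,\ldots,V_k\rangle$ is a finite intersection of open, permutation-invariant sets.

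The heart of the argument, and the step I expect to be the main obstacle, is to show that sets of this form give a neighborhood basis of $F(x_0)$ in the quotient topology; this is precisely where the Hausdorff hypothesis enters. Since $U$ is open, $p^{-1}(U)$ is an open subset of $Y^n$ containing every tuple whose underlying set is $\{y_1,\ldots,y_k\}$. Using that $Y$ is Hausdorff and the $y_i$ are distinct, I first choose pairwise disjoint open sets $U_i \ni y_i$. Then, ranging over the finitely many surjections $\phi:\{1,\ldots,n\}\to\{1,\ldots,k\}$, each tuple $(y_{\phi(1)},\ldots,y_{\phi(n)})$ lies in $p^{-1}(U)$, hence has a product neighborhood inside $p^{-1}(U)$; intersecting the coordinate neighborhoods of those $j$ with $\phi(j)=i$ (and intersecting with $U_i$ to preserve disjointness), and then intersecting over the finitely many $\phi$, I obtain disjoint open sets $V_i \ni y_i$ with $\prod_j V_{\phi(j)} \subseteq p^{-1}(U)$ for every surjection $\phi$.

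A representative-chasing argument then yields $\langle V_1,\ldots,V_k\rangle \subseteq U$: any $z$ in the Vietoris set admits a representative $(z_1,\ldots,z_n)$ whose coordinates each lie in exactly one of the disjoint $V_i$, defining a map $\phi$ which is surjective because $z$ meets every $V_i$, whence $(z_1,\ldots,z_n) \in \prod_j V_{\phi(j)} \subseteq p^{-1}(U)$ and so $z \in U$. The finiteness of the set of surjections is exactly what keeps the intersection defining $V_i$ finite, hence open; this is the pivotal point of the whole proof.

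Finally I pull the Vietoris neighborhood back through $F$. By definition,
\[ F^{-1}\langle V_1,\ldots,V_k\rangle = \Bigl\{\, a \in X : f(a) \subseteq \textstyle\bigcup_i V_i \,\Bigr\} \cap \bigcap_i \bigl\{\, a \in X : f(a) \cap V_i \neq \emptyset \,\bigr\}. \]
The first set is open by upper semicontinuity (as $\bigcup_i V_i$ is open), while each remaining set is open by lower semicontinuity, which $f$ enjoys as an at-most-$n$-valued map; the intersection is finite, hence open, and contains $x_0$ since $f(x_0)=\{y_1,\ldots,y_k\}$ lies in $\bigcup_i V_i$ and meets each $V_i$. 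Thus $x_0 \in F^{-1}\langle V_1,\ldots,V_k\rangle \subseteq F^{-1}(U)$, giving the required neighborhood. I anticipate the only genuine difficulty to be the neighborhood-basis step; the opening openness check and this closing pullback are routine. It is worth noting that both semicontinuity conditions are genuinely used: dropping lower semicontinuity breaks the conclusion, as the upper-semicontinuous-but-not-lower-semicontinuous map of Figure \ref{uscnotlscfig} induces a discontinuous $F$.
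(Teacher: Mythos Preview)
Your proof is correct, and in fact more complete than the paper's own argument. Both share the same skeleton—separate the points $y_1,\ldots,y_k$ of $f(x_0)$ by disjoint open sets using Hausdorffness, then pull a suitable neighborhood of $F(x_0)$ back through $F$—but they diverge in how they handle the topology of $C_n(Y)$ and in which semicontinuity hypotheses are invoked.

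The paper simply asserts that sets of the form $p(V_1\times\cdots\times V_n)$ give a basis for $C_n(Y)$ and that $y\in p(V_1\times\cdots\times V_n)$ is equivalent to $y\subseteq\bigcup_i V_i$; it then shrinks to disjoint $W_i\subseteq V_i$ and uses only upper semicontinuity of $f$ to produce the neighborhood $Z=\{x:f(x)\subset\bigcup W_i\}$. You instead work with Vietoris sets $\langle V_1,\ldots,V_k\rangle$, verify directly via $p^{-1}$ that they are open, and establish that they form a local basis by the finite-surjection shrinking argument. Your route buys genuine rigor: the membership criterion the paper relies on is only a one-way implication (containment in $\bigcup V_i$ is necessary but not sufficient to lie in $p(V_1\times\cdots\times V_n)$), and, as you correctly note at the end, an argument using upper semicontinuity alone cannot close since the map of Figure~\ref{uscnotlscfig} is upper semicontinuous yet yields a discontinuous $F$. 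Your explicit appeal to lower semicontinuity—available because the paper's definition of ``at-most-$n$-valued'' already includes it—is therefore essential and makes transparent exactly where each half of the hypothesis is used.
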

\begin{proof}
Recall that the topology on $C_n(Y)$ is induced by the quotient map $p:Y^n \to C_n(Y)$ given by viewing each $n$-tuple as an unordered $k$-element set for $k\le n$ by discarding duplicate elements.
Thus a basis open set $U\subset C_n(Y)$ will have the form $U=p(V_1\times \dots \times V_n)$ where each $V_i \subset Y$ is open. For a particular configuration $y\in C_n(Y)$, we will have $y \in U$ if and only if $y\subset \bigcup V_i$.

To show that $F$ is continuous, choose $U = p(V_1\times \dots \times V_n)$ a basis open set of $C_n(Y)$, and we will show that $F^{-1}(U)$ is open in $X$. Take some point $x \in U$, and we will show that there is a neighborhood $Z\subset F^{-1}(U)$ with $x\in Z$.

Let $f(x) = \{y_1,\dots, y_k\}$ for $k\le n$ and $y_i\neq y_j$ for all $i\neq j$. We may renumber the $V_i$ so that $y_i \in V_i$ for each $i \in \{1,\dots,k\}$. Since $Y$ is Hausdorff, there are pairwise disjoint open sets $W_i\subseteq V_i$ such that $y_i \in W_i$ for each $i$. Let $W = \bigcup W_i$, and let $Z = \{ x\in X \mid f(x)\subset W\}$. This set $Z$ contains the point $x$, and $Z$ is open because $f$ is upper semicontinuous. It remains only to show that $Z \subset F^{-1}(U)$.

Taking $z\in Z$, we have $f(z) \subset W \subseteq \bigcup V_i$, which means that $f(z) \in U$. Thus $F(Z) \subset U$, and so $Z \subset F^{-1}(U)$ as desired.
\end{proof}

\begin{lem}
Let $X$ and $Y$ be metric spaces, and let $F:X\to C_n(Y)$ be continuous. Then the corresponding function $f:X\multimap Y$ is upper and lower semicontinuous.
\end{lem}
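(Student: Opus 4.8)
The plan is to exploit the earlier theorem identifying the quotient topology on $C_n(Y)$ with the topology of the Hausdorff metric $d_H$. Since $X$ and $Y$ are metric, continuity of $F$ becomes an ordinary $\epsilon$--$\delta$ statement: for each $x_0\in X$ and each $\epsilon>0$ there is $\delta>0$ such that $d_X(x,x_0)<\delta$ implies $d_H(f(x),f(x_0))<\epsilon$. First I would unpack the condition $d_H(f(x),f(x_0))<\epsilon$ into its two halves coming from the definition of $d_H$: (i) every point of $f(x)$ lies within $\epsilon$ of some point of $f(x_0)$, and (ii) every point of $f(x_0)$ lies within $\epsilon$ of some point of $f(x)$. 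These two halves will drive the two semicontinuity properties respectively.

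For lower semicontinuity, I would fix an open $V\subset Y$ and a point $x_0$ with $f(x_0)\cap V\neq\emptyset$. Choosing $y_0\in f(x_0)\cap V$ and $\epsilon>0$ with $B(y_0,\epsilon)\subset V$, continuity yields a $\delta$ so that $d_X(x,x_0)<\delta$ forces $d_H(f(x),f(x_0))<\epsilon$; then half (ii) applied to the point $y_0$ produces a point of $f(x)$ inside $B(y_0,\epsilon)\subset V$, so $f(x)\cap V\neq\emptyset$ throughout the $\delta$-ball. Hence $\{x\mid f(x)\cap V\neq\emptyset\}$ is open.

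For upper semicontinuity, I would fix an open $V$ and a point $x_0$ with $f(x_0)=\{y_1,\dots,y_k\}\subset V$, where $k\le n$. Here I use that $f(x_0)$ is finite: for each $i$ choose $\epsilon_i>0$ with $B(y_i,\epsilon_i)\subset V$ and set $\epsilon=\min_i\epsilon_i>0$, so that $\bigcup_i B(y_i,\epsilon)\subset V$. Continuity gives a $\delta$ with $d_H(f(x),f(x_0))<\epsilon$ on the $\delta$-ball, and then half (i) yields $f(x)\subset\bigcup_i B(y_i,\epsilon)\subset V$. Thus $\{x\mid f(x)\subset V\}$ is open.

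The two semicontinuity verifications are otherwise routine; the one genuinely load-bearing point is in the upper semicontinuity argument, where I must extract a single uniform $\epsilon$ that works simultaneously for all points of $f(x_0)$. This is exactly where the at-most-$n$ hypothesis enters, guaranteeing that $f(x_0)$ is finite so that $\min_i\epsilon_i$ is positive; without finiteness this step could fail. I would also take care to match the two asymmetric halves of $d_H$ to the correct semicontinuity notion, since interchanging them would prove nothing.
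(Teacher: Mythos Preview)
Your proof is correct. Both you and the paper use the Hausdorff-metric description of $C_n(Y)$ and unpack $d_H(f(x),f(x_0))<\epsilon$ into its two asymmetric halves; your lower-semicontinuity argument is essentially the paper's, only phrased for a general open $V$ rather than a basis ball.

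The one real difference is in how upper semicontinuity is obtained. The paper does not prove it directly: after establishing lower semicontinuity it simply invokes the earlier result (Theorem~\ref{lowerisupper}) that, for Hausdorff $Y$, an at-most-$n$-valued lower semicontinuous map is automatically upper semicontinuous. You instead give a direct argument, using finiteness of $f(x_0)$ to extract a uniform $\epsilon$ and then reading off containment from half~(i) of the Hausdorff estimate. Your route is more self-contained and makes transparent exactly where finiteness of the values is used; the paper's route is shorter since it recycles work already done. Either is perfectly acceptable.
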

\begin{proof}
By Lemma \ref{lowerisupper} it suffices to show that $f$ is lower semicontinuous. For this proof we will view $C_n(Y)$ as having the Hausdorff topology. To show that $f$ is lower semicontinuous, we start with some basis open set $B(z,\epsilon)$ for $z\in Y$, and we must show that the following set in $X$ is open:
\[ U = \{ x \in X \mid f(x) \cap B(z,\epsilon) \neq \emptyset \} \]
Take some $x\in U$, and we will find a neighborhood of $x$ contained in $U$.

Since $x\in U$ we have $f(x)\cap B(z,\epsilon) \neq \emptyset$, so let $y\in f(x)$ be a point within distance $\alpha <\epsilon$ of $z$. Since $F$ is continuous with the Hausdorff metric, there is some $\delta>0$ such that:
\[ F(B(x,\delta)) \subseteq B_H(f(x),\epsilon-\alpha). \]
The above means that, for all $a \in B(x,\delta)$, the set $f(a)$ contains some point within distance $\epsilon-\alpha$ of $y$, and thus, contains some point within distance $\epsilon$ of $z$. That is, if $a \in B(x,\delta)$, then $f(a) \cap B(z,\epsilon) \neq \emptyset$.

This means that $B(x,\delta)$ is a neighborhood of $x$ contained in $U$, and thus $U$ is open as desired.
\end{proof}

The two previous lemmas combine to give the following characterization:
\begin{thm}\label{mapcorrespondence}
Let $X$ and $Y$ be metric spaces. Then an at-most-$n$-valued function $f:X\multimap Y$ is lower and upper semicontinuous if and only if the corresponding single-valued function $F:X\to C_n(Y)$ is continuous.
\end{thm}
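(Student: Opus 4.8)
The plan is to obtain the theorem by directly combining the two lemmas immediately preceding it, after verifying that their hypotheses are met in the present setting. The only point requiring attention is that those two lemmas carry slightly different standing assumptions: the first asks that $Y$ be Hausdorff, while the second asks that both $X$ and $Y$ be metric spaces. Since the theorem assumes that $X$ and $Y$ are metric, both sets of hypotheses are automatically available, because every metric space is Hausdorff.

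For the forward implication, I would assume that $f$ is both lower and upper semicontinuous. Discarding the lower semicontinuity for the moment, I retain that $f$ is upper semicontinuous and at-most-$n$-valued with $Y$ Hausdorff, which is precisely the hypothesis of the first of the two preceding lemmas; that lemma then yields the continuity of the associated single-valued map $F:X\to C_n(Y)$. For the reverse implication, I would assume that $F$ is continuous and invoke the second preceding lemma, whose hypotheses (that $X$ and $Y$ are metric) hold by assumption; this gives at once that the corresponding $f$ is both upper and lower semicontinuous, completing the equivalence.

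I do not anticipate any genuine obstacle, since all of the analytic content has already been isolated in the two lemmas and the theorem is essentially a bookkeeping step that packages them as an ``if and only if''. The one thing worth flagging is that the forward direction uses only the upper semicontinuity of $f$, so in light of Theorem \ref{lowerisupper} (lower semicontinuity implies upper semicontinuity for at-most-$n$-valued maps into a Hausdorff space) the hypothesis ``lower and upper semicontinuous'' could equivalently be phrased as ``lower semicontinuous'' alone; I would likely remark on this rather than weaken the statement, to keep the symmetry of the two directions transparent.
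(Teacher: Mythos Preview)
Your proposal is correct and matches the paper's own argument exactly: the paper simply states that the theorem is obtained by combining the two preceding lemmas, which is precisely what you do. Your observation about reconciling the Hausdorff versus metric hypotheses is a nice clarification that the paper leaves implicit.
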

Because of the above, we will typically view an at-most-$n$-valued map $f:X\multimap Y$ as identified with the corresponding single-valued map into the configuration space $C_n(Y)$.

\section{Models of at-most-$n$-valued maps in the literature}
For the rest of the paper, when considering multimaps $X\multimap Y$, we will assume $X$ and $Y$ are metric spaces, and $X$ is connected. We will not typically use the metrics directly, but we will use Theorem \ref{mapcorrespondence} implicitly throughout.

In this section we describe 4 different models appearing in the literature for various types of at-most-$n$-valued maps:
\begin{itemize}
\item $\mathcal{U}$, the set of unions of equicardinal maps
\item $\mathcal{F}$, the set of $n$-fold maps as defined by Crabb
\item $\mathcal{W}$, the set of $\N$-weighted maps
\item $\mathcal{S}$, the set of symmetric product maps
\end{itemize}

Let $\mathcal C$ be the set of all general at-most-$n$-valued maps: that is, continuous maps $X\to C_n(Y)$ for some spaces $X$ and $Y$.

Each model $\mathcal X$ carries with it an \emph{interpretation function} $\Psi_{\mathcal{X}}:\mathcal X \to \mathcal C$, which describes how to interpret an element of $\mathcal X$ as an at-most-$n$-valued map. When there is no ambiguity, we write $\Psi_{\mathcal X}$ simply as $\Psi$.

We wish to compare these various models with one another.
For example: does every union of equicardinal maps naturally correspond to an $n$-fold map? This question is equivalent to asking if $\Psi(\mathcal U) \subseteq \Psi(\mathcal F)$.
Our results concerning these 4 models can be summarized by the following relations:
\begin{equation}\label{containments}
\Psi(\mathcal{U}) \subsetneq \Psi(\mathcal{F}) \subsetneq \Psi(\mathcal{S}) = \Psi(\mathcal{W}) \subsetneq \mathcal{C}.
\end{equation}

In the following subsections we define the classes above and describe the interpretation function $\Psi:\mathcal X\to \mathcal{C}$ for each.

\subsection{The class $\mathcal U$: unions of equicardinal maps}
We define the class $\mathcal U$ as the collection of all finite sets $f = \{f_1,\dots,f_m\}$, where each $f_i$ is an equicardinal map $f_i:X\to D_{k_i}(Y)$. Our interpretation function views such a set as a \emph{union of equicardinal maps} as follows:
\[ \Psi(\{f_1,\dots, f_m\}) (x) = f_1(x) \cup \dots \cup f_m(x). \]
When $k_1 + \dots + k_m = n$, the  union above defines an at-most-$n$-valued map $X\multimap Y$.

A homotopy of maps in $\mathcal U$ is defined as follows: Two elements of $\mathcal U$ are homotopic if they can be expressed as $\{f_1 \cup \dots \cup f_m\}$ and $\{f'_1 \cup \dots \cup f'_m\}$ such that $f_i$ is homotopic as a $k_i$-valued map to $f'_i$ for each $i$.

\subsection{The class $\mathcal F$: Crabb's $n$-fold maps}
Crabb, in \cite{crabb}, defines a class of maps as follows:
Let $p:\bar X \to X$ be an $n$-fold covering space, and let $\bar f:\bar X \to Y$ be a single-valued map. Then this determines a multivalued map $F:X \multimap Y$ defined by $F(x) = \bar f(p^{-1}(x))$. Because $f$ is not necessarily injective, we have $\#F(x) \le n$ for all $x$. Crabb uses the notation $F = \bar f/p$. He calls this simply an ``$n$-valued map'', or in some later work a ``structured $n$-valued map.'' We would like to suggest the term ``$n$-fold map'', since these maps are constructed using $n$-fold coverings.

The class $\mathcal F$ consists of all such $n$-fold maps $\bar f/p$ where $p:\bar X \to X$ is a finite cover and $\bar f:\bar X \to Y$ is a map. The interpretation function $\Psi_{\mathcal F}: \mathcal F \to \mathcal C$ is given by $\Psi_{\mathcal F}(\bar f/p) = \bar f\circ p^{-1}$.

Crabb's work does not discuss general homotopies of $n$-fold maps. He demonstrates a homotopy-invariance property for the  fixed point index with respect to homotopies of the form $(\bar f/p) \simeq (\bar f'/p)$, in which the covering $p$ must remain the same while varying only the map $\bar f$. This will not suffice for our purposes, since we want to consider two $n$-fold maps to be homotopic when they use different homeomorphic variations of the same covering map.

We define a homotopy of maps in $\mathcal F$ as follows: Let $(\bar f/p)$ and $(\bar f'/p')$ be two $n$-fold maps where $p:\bar X \to X$ and $p': \bar X' \to X$ are $n$-fold covers and $\bar f: \bar X \to Y$ and $\bar f': \bar X'\to Y$ are maps. We say that $(\bar f/p)$ and $(\bar f'/p')$ are homotopic when there is a homeomorphism 
$h:\bar X \to \bar X'$ such that $f \simeq \bar f' \circ h$ and $p = p' \circ h$.

\bibliographystyle{plain}

\subsection{The class $\mathcal{W}$: $\N$-weighted maps}

Between 1958 and 1961, G. Darbo published the first of three papers on the homology theory of weighted maps. In this work, he introduced a class of multivalued transformations known as weighted mappings (cf. \cite{darbo1, darbo2, darbo3}).
Informally speaking, an $\N$-weighted map from $X$ to $Y$ is a continuous multimap $f:X\multimap Y$ such that $f(x)$ is a finite set for each $x$, and each individual element of the graph of $f(x)$ carries a natural number ``weight." The function describing the weight at each point must have constant sum on neighborhoods of the graph. See \cite{Pejs3} for a survey. Formally, we have:
\begin{defn}\label{definitionofweightedmap1309}
A weighted map from $X$ to $Y$ is a pair
$\psi=(n_{\psi},w_{\psi})$ satisfying the following
conditions:
\begin{enumerate}
\item[$(1)$] $n_{\psi}\colon X \multimap Y$ is a multivalued
upper semicontinuous map such that $n_{\psi}(x)$ is a finite
subset of $Y$ for all $x\in X$;
\item[$(2)$] $w_{\psi}\colon X\times Y\rightarrow \N$ is a
function with the following properties:
\begin{enumerate}
\item[$(i)$] $ w_{\psi}(x,y)=0$ for any $y\not\in n_{\psi}(x)$;
\item[$(ii)$] for any open subset $U$ of $Y$ and $ x \in X $ such that $
n_{\psi}(x) \cap \partial U = \emptyset$ there exists an open
neighborhood V of the point $x$ such that:
\begin{equation}\label{weight}
\sum_{ y \in U} w_{\psi}(x,y)=\sum_{ y \in U} w_{\psi}(z,y),
\end{equation}
for every $z\in V$.
\end{enumerate}
\end{enumerate}
If a weighted map $\psi: X\multimap Y$ satisfies additionally the following condition
\[
w_{\psi}(x,y) >  0  \text{ for  all }  y\in n_{\varphi}(x),
\]
then $\psi$  is called an $\N$-weighted map.
\end{defn}

\begin{lem}\label{conti-N}
If $\psi=(n_{\psi},w_{\psi}): X\multimap Y$ is an $\mathbb{N}$-weighted map, then $n_{\psi}: X\multimap Y$ is continuous.
\end{lem}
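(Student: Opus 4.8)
The plan is to establish continuity of $n_\psi$ by showing it is lower semicontinuous; since part $(1)$ of Definition \ref{definitionofweightedmap1309} already provides upper semicontinuity, and continuity of a multimap means exactly that it is both upper and lower semicontinuous, this will suffice. Recall that lower semicontinuity of $n_\psi$ means that for every open $V\subset Y$ the set
\[ U = \{x \in X \mid n_\psi(x)\cap V \neq \emptyset\} \]
is open in $X$. So I would fix such a $V$ and an arbitrary point $x_0\in U$, and produce an open neighborhood of $x_0$ contained in $U$.

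Since $x_0\in U$, I would choose a point $y_0 \in n_\psi(x_0)\cap V$. Using that $Y$ is a metric space and that $n_\psi(x_0)$ is a finite set, I would select a small open ball $O$ about $y_0$ whose closure lies in $V$, which contains no point of $n_\psi(x_0)$ other than $y_0$, and whose boundary $\partial O$ misses $n_\psi(x_0)$ entirely. This is arranged by taking the radius strictly smaller than both the distance from $y_0$ to the (finitely many) other points of $n_\psi(x_0)$ and the distance from $y_0$ to $Y\setminus V$. Thus $n_\psi(x_0)\cap O = \{y_0\}$ and, crucially, $n_\psi(x_0)\cap \partial O = \emptyset$, so that the constancy condition $(2)(ii)$ applies to the open set $O$ at the point $x_0$.

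Applying that condition yields an open neighborhood $V'$ of $x_0$ on which the partial weight sum over $O$ is constant:
\[ \sum_{y\in O} w_\psi(x_0,y) = \sum_{y\in O} w_\psi(z,y) \quad\text{for all } z\in V'. \]
At $x_0$ this sum equals $w_\psi(x_0,y_0)$, since by $(2)(i)$ the weights vanish off $n_\psi(x_0)$ and $y_0$ is the only point of $n_\psi(x_0)$ lying in $O$. Here is the decisive use of the $\N$-weighted hypothesis: because $y_0\in n_\psi(x_0)$, the defining positivity condition gives $w_\psi(x_0,y_0)>0$. Consequently the sum $\sum_{y\in O} w_\psi(z,y)$ is positive for every $z\in V'$, which forces the existence of some $y\in O$ with $w_\psi(z,y)>0$, and hence $y\in n_\psi(z)$ by $(2)(i)$. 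Since $O\subset V$, this gives $n_\psi(z)\cap V\neq\emptyset$, i.e.\ $z\in U$. Therefore $V'\subset U$, and $U$ is open.

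The only delicate step is the construction of the ball $O$ isolating $y_0$ among the finite set $n_\psi(x_0)$ with its boundary disjoint from $n_\psi(x_0)$, so that the constancy condition is legitimately available; this is routine in a metric space, but it is precisely where finiteness of $n_\psi(x_0)$ is needed. The conceptual heart of the argument, and the point I would emphasize, is the positivity of the weights: for a \emph{general} weighted map a point of $n_\psi(x_0)$ may carry weight zero, so the partial sum over $O$ could vanish and the conclusion $z\in U$ would fail. It is exactly the $\N$-weighted condition $w_\psi>0$ on $n_\psi(x)$ that rules this out and drives the proof.
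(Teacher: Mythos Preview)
Your proof is correct and follows essentially the same approach as the paper's: reduce to lower semicontinuity, enclose part of $n_\psi(x_0)\cap V$ in an open set whose boundary misses $n_\psi(x_0)$, and use local constancy of the weight sum together with the positivity from the $\mathbb N$-weighted hypothesis. The only cosmetic difference is that the paper encloses all of $n_\psi(x_0)\cap U$ in one open set $V_1$ (with a companion $V_2$ separating the remaining points), while you isolate a single point $y_0$ with a ball; both choices serve the same purpose.
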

\begin{proof}
It suffices to prove that $n_{\psi}$ is lower semicontinous. Namely, we need to show that for any open subset $U$ of $Y$ the following
set
\begin{equation*}
(n_{\psi})^{-1}_+(U):=\{x\in X\mid n_{\psi}(x)\cap U\neq\emptyset\}
\end{equation*}
is open in $X$. To see this, take $x_0\in (n_{\psi})^{-1}_+(U)$ and let
\begin{equation*}
n_{\psi}(x_0)\cap U=\{y_1,...,y_s\}\text{ and }n_{\psi}(x_0)\cap (Y\setminus U)=\{z_1,...,z_r\}.
\end{equation*}
Then there exist two open subsets $V_1$ and $V_2$ of $Y$ such that
\begin{align*}
\overline{V_1}\cap \overline{V_2}=\emptyset &\text{ and }\overline{V_1}\subset U,\\
n_{\psi}(x_0)\cap U\subset V_1&\text{ and }n_{\psi}(x_0)\cap (Y\setminus U)\subset V_2
\end{align*}
and
\begin{equation}\label{weight1}
\sum_{y\in V_1}w_{\psi}(x_0,y)>0.
\end{equation}
Since $n_{\psi}(x_0)\cap \partial V_1=\emptyset$, it follows from \eqref{weight} that there exists an open neighborhood $W$ of $x_0$ such that
\begin{equation}\label{weight2}
\sum_{ y \in V_1} w_{\psi}(x_0,y)=\sum_{ y \in V_1} w_{\psi}(x,y),
\end{equation}
for all $x\in W$. Now taking into account \eqref{weight1} and \eqref{weight2}, we obtain that
\begin{equation*}
n_{\psi}(x)\cap V_1\neq\emptyset,
\end{equation*}
for all $x\in W$, which implies that $W\subset (n_{\psi})^{-1}_+(U)$. This completes the proof that $(n_{\psi})^{-1}_+(U)$ is open
in $X$.
\end{proof}
\begin{defn}
Given an $\N$-weighted map $\psi:X\multimap Y$, where $X$ is a connected (Hausdorff) space, the \emph{weighted index} of $\psi$ is the natural number $I_w(\psi)\in \N$ defined as follows: choose some $x\in X$, and let
\[I_w(\psi): = \sum _{y\in Y}\omega(x,y). \]

\end{defn}
We will now present a lemma that shows that the above definition is correct.

\begin{lem}
When $X$ is connected, the number $I_w(\psi)$ is independent of the choice of $x$.
\end{lem}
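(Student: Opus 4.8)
The plan is to show that the quantity $\sum_{y\in Y} w_\psi(x,y)$ is locally constant on $X$, and then invoke connectedness of $X$ to conclude it is globally constant. Since a locally constant function on a connected space is constant, this suffices.

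First I would fix an arbitrary point $x_0 \in X$ and produce a neighborhood of $x_0$ on which the total weight is constant. The natural device is condition $(2)(ii)$ of Definition \ref{definitionofweightedmap1309}, applied with the ``whole space'' playing the role of the open set $U$. More precisely, since $n_\psi(x_0)$ is a finite subset of the Hausdorff space $Y$, I would choose an open set $U \subseteq Y$ that contains all of $n_\psi(x_0)$ but whose boundary misses it, so that $n_\psi(x_0) \cap \partial U = \emptyset$. For instance one can take $U$ to be a finite union of disjoint open balls around the points of $n_\psi(x_0)$; since $n_\psi(x_0)$ lies in the interior and is finite, $\partial U$ can be arranged to avoid these points. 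Then condition $(2)(ii)$ gives an open neighborhood $V$ of $x_0$ such that
\[ \sum_{y\in U} w_\psi(x_0,y) = \sum_{y\in U} w_\psi(z,y) \quad \text{for all } z\in V. \]

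The key step is then to upgrade this local equality of \emph{partial} sums over $U$ into equality of the \emph{total} sums over all of $Y$. Here I would use part $(2)(i)$: the weight $w_\psi(z,y)$ vanishes whenever $y \notin n_\psi(z)$, so the total sum $\sum_{y\in Y} w_\psi(z,y)$ is really a finite sum over $n_\psi(z)$. After possibly shrinking $V$, I would like to guarantee that $n_\psi(z) \subseteq U$ for all $z\in V$, so that the sum over $U$ coincides with the total sum. This is where upper semicontinuity of $n_\psi$ (condition $(1)$) enters: since $U$ is open and contains $n_\psi(x_0)$, the set $\{z \mid n_\psi(z)\subseteq U\}$ is an open neighborhood of $x_0$, and intersecting it with $V$ yields a neighborhood on which $\sum_{y\in U} w_\psi(z,y) = \sum_{y\in Y} w_\psi(z,y)$. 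Combining this with the displayed equality above gives $\sum_{y\in Y} w_\psi(x_0,y) = \sum_{y\in Y} w_\psi(z,y)$ on a neighborhood of $x_0$, establishing local constancy.

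The main obstacle I anticipate is the bookkeeping around the open set $U$: I must choose $U$ so that simultaneously $n_\psi(x_0) \cap \partial U = \emptyset$ (to apply $(2)(ii)$) and so that the neighborhood coming from upper semicontinuity genuinely forces $n_\psi(z) \subseteq U$ for nearby $z$. Using disjoint open balls around the finitely many points of $n_\psi(x_0)$ should handle both requirements cleanly, though one must be slightly careful that the closures of these balls do not introduce spurious boundary intersections. Once local constancy is established, the conclusion is immediate: the total weight defines a locally constant, hence continuous, $\N$-valued function on the connected space $X$, and is therefore constant, so $I_w(\psi)$ does not depend on the choice of $x$.
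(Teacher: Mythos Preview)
Your argument is correct and follows the same overall strategy as the paper: show that the total-weight function $x\mapsto \sum_{y\in Y} w_\psi(x,y)$ is locally constant, then use connectedness of $X$. The paper phrases this as a contradiction argument (defining $X_1$ and $X_2$ as the level set and its complement and showing both are open), but that is just the standard unpacking of ``locally constant on a connected space implies constant.''

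The one place where your route is more elaborate than necessary is the choice of $U$. You build $U$ as a union of disjoint balls around the points of $n_\psi(x_0)$, then invoke upper semicontinuity of $n_\psi$ to shrink the neighborhood so that $n_\psi(z)\subseteq U$ for nearby $z$, in order to identify the partial sum over $U$ with the total sum over $Y$. The paper sidesteps all of this by simply taking $U=Y$: since $\partial Y=\emptyset$, the hypothesis $n_\psi(x_0)\cap\partial U=\emptyset$ of condition $(2)(ii)$ is vacuously satisfied, and the resulting equality $\sum_{y\in Y} w_\psi(x_0,y)=\sum_{y\in Y} w_\psi(z,y)$ is already the total sum. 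This removes the need for the separate upper-semicontinuity step and the bookkeeping about boundaries of balls that you flagged as a potential obstacle.
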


\begin{proof}
Assume, for the sake of contradiction, that there exist two points \( x_1, x_2 \in X \) such that
\[
\sum_{y \in Y} w(x_1, y) \neq \sum_{y \in Y} w(x_2, y).
\]
Define the following sets
\[
X_1 := \left\{ x \in X \;\middle|\; \sum_{y \in Y} w(x, y) = \sum_{y \in Y} w(x_1, y) \right\},
\]
and
\[
X_2 := \left\{ x \in X \;\middle|\; \sum_{y \in Y} w(x, y)\neq\sum_{y \in Y} w(x_1, y) \right\}.
\]
It is clear that the sets $X_1$ and $X_2$ have the following properties:
\begin{itemize}
\item $X_1\neq\emptyset$ and $X_2\neq\emptyset$,
\item $X_1 \cap X_2 = \emptyset$ and $X_1\cup X_2=X$,
\end{itemize}
which implies that  $X$ is the disjoint union of two non-empty sets. Now we will show that $X_1$ and $X_2$ are open in $X$.
Indeed, let $x\in X_1$. We have to show that there exists an open neighborhood $V_x$ of $x$ such that $V_x\subset X_1$.
Since $\partial Y=\emptyset$, it follows that $n_{\psi}(x)\cap\partial Y=\emptyset$, and hence, by Definition \ref{definitionofweightedmap1309},
there exists an open subset $V_x$ of $X$ such that $x\in V_x$ and
\begin{equation*}
\sum_{ y \in Y} w_{\psi}(x,y)=\sum_{ y \in Y} w_{\psi}(z,y) \text{ for all }z\in V_x,
\end{equation*}
which implies that $V_x\subset X_1$.  Thus $X_1$ is open in $X$. Let $\tilde{x}\in X_2$. Then
\begin{equation*}
\sum_{y \in Y} w(\tilde{x}, y)\neq\sum_{y \in Y} w(x_1, y).
\end{equation*}
By applying once again the second condition of Definition \ref{definitionofweightedmap1309}, we can conclude that there exists an open
neighborhood $V_{\tilde{x}}$ of $\tilde{x}$ in $X$ such that
\begin{equation*}
\sum_{y \in Y} w(x_1, y) \neq\sum_{ y \in Y} w_{\psi}(\tilde{x},y)=\sum_{ y \in Y} w_{\psi}(z,y) \text{ for all }z\in V_{\tilde{x}} ,
\end{equation*}
which implies that $V_{\tilde{x}}\subset X_2$, and therefore $X_2$ is open in $X$.
This contradicts the connectedness of $X$, so our assumption must be false. Thus, we deduce that the sum
\[
\sum_{y \in Y} w(x, y)
\]
is constant.
\end{proof}

%
%

We must show that \( n_\psi(X) \) is indeed an element of \( \mathcal{C} \), meaning that it is continuous as an at-most-\( n \)-valued map.
\begin{thm}
Let $\psi = (n_\psi,w_\psi)$ be an $\N$-weighted map with weighted index $n$ from $X$ to $Y$. Then $\Psi(\psi) = n_\psi$ is continuous as an at-most-$n$-valued map.
\end{thm}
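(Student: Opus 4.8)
The plan is to check that $n_\psi$ satisfies the two requirements for membership in $\mathcal{C}$: that it is genuinely at-most-$n$-valued (each image nonempty of cardinality at most $n$), and that it is continuous in the sense of Theorem \ref{mapcorrespondence}, i.e. both lower and upper semicontinuous. Both follow quickly from the hypotheses together with the lemmas already established, so the theorem is essentially an assembly of prior results.

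First I would establish the cardinality bound using the positivity of the weights. Because $\psi$ is an $\N$-weighted map, Definition \ref{definitionofweightedmap1309} guarantees that $w_\psi(x,y) > 0$, hence $w_\psi(x,y) \ge 1$, for every $y \in n_\psi(x)$, while condition $(2)(i)$ forces $w_\psi(x,y) = 0$ for $y \notin n_\psi(x)$. Fixing any $x \in X$, this yields
\[ \# n_\psi(x) \le \sum_{y \in n_\psi(x)} w_\psi(x,y) = \sum_{y \in Y} w_\psi(x,y) = I_w(\psi) = n, \]
where the first inequality holds because each of the $\# n_\psi(x)$ summands is at least $1$, the middle equality uses that the weights vanish off $n_\psi(x)$, and the last equality is the hypothesis that the weighted index is $n$ (well-defined by the preceding lemma). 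The same computation shows that an empty image would force the index to be $0$; since the index is $n \ge 1$, every image $n_\psi(x)$ is nonempty. Hence $n_\psi$ has nonempty images of cardinality at most $n$.

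It then remains to establish continuity as an at-most-$n$-valued map, which by Theorem \ref{mapcorrespondence} is equivalent to $n_\psi$ being simultaneously lower and upper semicontinuous. Upper semicontinuity is immediate, as it is built directly into condition $(1)$ of Definition \ref{definitionofweightedmap1309}, and lower semicontinuity is precisely the content of Lemma \ref{conti-N}, whose proof relies on the positivity of weights in the $\N$-weighted case. Combining these with the cardinality bound shows that $n_\psi$ corresponds to a continuous single-valued map $X \to C_n(Y)$, so $\Psi(\psi) = n_\psi$ lies in $\mathcal{C}$. I do not anticipate a genuine obstacle; the only points requiring care are the bookkeeping in the cardinality estimate — in particular noting that nonemptiness of each image is \emph{forced} by $n \ge 1$ rather than assumed — and correctly reading ``continuous as an at-most-$n$-valued map'' through Theorem \ref{mapcorrespondence} as joint lower and upper semicontinuity, so that Lemma \ref{conti-N} supplies exactly the missing half.
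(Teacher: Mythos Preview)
Your proposal is correct and follows essentially the same approach as the paper: upper semicontinuity comes directly from Definition~\ref{definitionofweightedmap1309}, and lower semicontinuity is supplied by Lemma~\ref{conti-N}. You additionally spell out the cardinality bound $\# n_\psi(x)\le n$ and nonemptiness via positivity of the weights, which the paper leaves implicit; this extra bookkeeping is sound and makes the argument more self-contained.
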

\begin{proof}
By Definition \ref{definitionofweightedmap1309}, the multivalued map $n_\psi$ is assumed to be upper semicontinuous. Lemma \ref{conti-N} shows that it is also lower semicontinuous.
\end{proof}

\subsection{The class $\mathcal S$: Symmetric product maps}
Let $\SP^n(X)$ be the $n$-fold symmetric product of $X$. This is the quotient of $X^n$ by the symmetric group. We will write elements of $\SP^n(X)$ as $[x_1,\dots,x_n]$ for $x_i\in X$. Let $u:\SP^n(X) \to C_n(X)$ be the map which simply considers an element of $\SP^n(X)$ as a set of at most $n$ points, that is, $u([x_1,\dots,x_n]) = \{x_1,\dots, x_n\}$.

More references about the homotopy theory of Symmetric product maps will given in some of the next sections.

The class $\mathcal S$ consists of all maps $f:X\to \SP^n(Y)$. The function $\Psi_{\mathcal S}: \mathcal S \to \mathcal C$ is given by $\Psi_{\mathcal S}(f) = u \circ f$.
\begin{thm}
Let $f: X\multimap \SP^n(Y)$ be a symmetric product map. Then $u\circ f:X\to C_n(Y)$ is continuous as an at-most-$n$-valued map.
\end{thm}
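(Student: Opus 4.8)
The plan is to reduce the statement to the continuity of the single map $u:\SP^n(Y)\to C_n(Y)$, and then to establish that continuity using the universal property of the quotient topology. Since an element $f\in\mathcal S$ is by definition a continuous single-valued map $f:X\to\SP^n(Y)$, and since $\Psi_{\mathcal S}(f)=u\circ f$, once I know $u$ is continuous the composite $u\circ f:X\to C_n(Y)$ is automatically continuous as a composition of continuous maps. By the convention following Theorem \ref{mapcorrespondence}, continuity of this single-valued map into $C_n(Y)$ is exactly what is meant by ``$u\circ f$ is continuous as an at-most-$n$-valued map,'' so this reduction already finishes the theorem modulo the continuity of $u$.

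To prove $u$ is continuous, I would exploit the fact that both $\SP^n(Y)$ and $C_n(Y)$ are quotients of the same space $Y^n$. Write $q:Y^n\to\SP^n(Y)$ for the quotient by the $S_n$-action, $q(y_1,\dots,y_n)=[y_1,\dots,y_n]$, and recall that $p:Y^n\to C_n(Y)$ sends $(y_1,\dots,y_n)$ to the set $\{y_1,\dots,y_n\}$. The key observation is that these fit into a commutative triangle at the level of underlying set-maps, namely $u\circ q=p$, since forgetting the ordering and then discarding duplicates agrees with discarding duplicates directly. Along the way one should record that $u$ is well defined, as permuting the entries of a representative $[y_1,\dots,y_n]$ does not change the underlying set $\{y_1,\dots,y_n\}$.

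With the identity $u\circ q=p$ in hand, the conclusion is immediate from the universal property of the quotient map. Since $\SP^n(Y)$ carries the quotient topology induced by $q$, a function out of $\SP^n(Y)$ is continuous precisely when its precomposition with $q$ is continuous. Here that precomposition is $u\circ q=p$, which is continuous by the very definition of the topology on $C_n(Y)$ as a quotient of $Y^n$ under $p$. Hence $u$ is continuous, and therefore so is $u\circ f$.

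I do not expect a genuine obstacle here: the content is entirely formal, resting on the compatibility of the two quotient structures on $Y^n$. The only points requiring care are to verify the commutativity $u\circ q=p$ before invoking the universal property, and to make explicit that ``continuous as an at-most-$n$-valued map'' is being interpreted, via Theorem \ref{mapcorrespondence}, as continuity of the associated single-valued map into $C_n(Y)$.
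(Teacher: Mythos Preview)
Your proposal is correct and follows essentially the same strategy as the paper: reduce to showing that $u:\SP^n(Y)\to C_n(Y)$ is continuous, and use the factorization $u\circ q = p$ of the two quotients of $Y^n$. The only difference is in the final step: the paper invokes that $q$ is an open map to conclude $u^{-1}(U)=q(p^{-1}(U))$ is open, whereas you invoke the universal property of the quotient topology on $\SP^n(Y)$ directly; your route is in fact slightly cleaner, since it avoids the (easy but extra) verification that $q$ is open.
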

\begin{proof}
It will suffice to show that $u: \SP^n(Y) \to C_n(Y)$ is continuous.

Let $q:Y^n \to \SP^n(Y)$ be the quotient by the symmetric group, and $p:Y^n \to C_n(Y)$ be the quotient defining $C_n(Y)$. Clearly we have $q \circ u = p$. Then if $U \subset C_n(Y)$ is an open set, we have $p^{-1}(U)$ open because $p$ is continuous, and thus $q(p^{-1}(U))$ is open because $q$ is an open map. But we have
\[ q(p^{-1}(U)) = q(q^{-1}(u^{-1}(U))) = u^{-1}(U), \]
so $u^{-1}(U)$ is open and thus $u$ is continuous.
\end{proof}

Finally we show that every at-most-$2$-valued map can be obtained by a symmetric product map. Thus for $n=2$ we can view the classes of symmetric product maps and at-most-$n$-valued maps as the same. This is not true for $n>2$, as we will see in Example \ref{forkmap}, which presents an at-most-3-valued map which cannot be realized by a symmetric product map.

\begin{prop}\label{samen=2}
Let $X$ be a space. Then $C_2(X)$ is homeomorphic to $\SP^2(X)$.
\end{prop}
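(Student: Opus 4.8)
The plan is to recognize that both $C_2(X)$ and $\SP^2(X)$ are quotients of $X^2$, and to show they are quotients by the \emph{same} equivalence relation; the natural map $u$ between them is then a homeomorphism. Let $q:X^2\to \SP^2(X)$ be the quotient by the symmetric group and $p:X^2\to C_2(X)$ the quotient defining $C_2(X)$, so that $u\circ q = p$, where $u:\SP^2(X)\to C_2(X)$ was shown to be continuous in the theorem immediately above. The core of the argument is to verify that the partitions of $X^2$ induced by $p$ and by $q$ coincide, i.e. that $q(a)=q(b)\iff p(a)=p(b)$ for all $a,b\in X^2$.

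The forward implication is immediate: if $(x_1',x_2')$ is a permutation of $(x_1,x_2)$, then $\{x_1',x_2'\}=\{x_1,x_2\}$, so $q(a)=q(b)$ implies $p(a)=p(b)$. The reverse implication is the crucial step, and I would argue it by cases. Suppose $\{x_1,x_2\}=\{x_1',x_2'\}$ as sets. If $x_1\neq x_2$, then this common set has exactly two elements, which forces $(x_1',x_2')$ to be one of the two orderings of $(x_1,x_2)$, hence a permutation. If instead $x_1=x_2$, then the common set is the singleton $\{x_1\}$, forcing $x_1'=x_2'=x_1$, again a permutation (the identity). In either case $q(a)=q(b)$, so $p(a)=p(b)$ implies $q(a)=q(b)$.

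From this I conclude that $u$ is a bijection: surjectivity is clear, and injectivity is precisely the reverse implication just proved. To upgrade the continuous bijection $u$ to a homeomorphism, I would invoke the universal property of quotient maps. Since the two equivalence relations agree, the inverse $v=u^{-1}$ is well-defined and satisfies $v\circ p = q$; because $p$ is a quotient map and $q$ is continuous, $v$ is continuous. Hence $u$ is a homeomorphism.

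The main obstacle is exactly the reverse implication, equivalently the injectivity of $u$, and this is where the hypothesis $n=2$ is essential. What the case analysis really shows is that a size-$2$ multiset over $X$ is recoverable from its underlying set: either it has two distinct elements, or it is a single element which must then carry multiplicity $2$ in order to fill out a $2$-tuple. For $n\ge 3$ this recovery fails—for instance the multisets $[x,x,y]$ and $[x,y,y]$ with $x\neq y$ share the underlying set $\{x,y\}$ yet are distinct in $\SP^3(X)$—which is the structural reason the analogous statement breaks down for larger $n$, consistent with Example~\ref{forkmap}.
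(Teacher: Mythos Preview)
Your proof is correct and follows essentially the same approach as the paper: both identify $C_2(X)$ and $\SP^2(X)$ as quotients of $X^2$ and verify, by the same case analysis on whether the two coordinates are equal or distinct, that the induced equivalence relations coincide. Your version is slightly more explicit in invoking the universal property of quotient maps to obtain continuity of the inverse, whereas the paper simply asserts that the quotient topologies agree; you also helpfully explain why the argument fails for $n\ge 3$.
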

\begin{proof}
Viewing the symmetric product as a quotient of the Cartesian product, the elements of  $\SP^2(X)$ are classes of elements of $X\times X$ of the form $(x,x)$ or $(x_1,x_2)$ with $x_1\ne x_2$. The class $[(x,x)]$ of an element of the form $(x,x)$ contains only the element $(x,x)$. The class $[(x_1,x_2)]$ of an element $(x_1,x_2)$ with $x_1\ne x_2$ contains exactly two elements, namely $(x_1,x_2)$ and $(x_2,x_1)$.

From  the definition of $C_2(X)$, we have a completely analogous description of its elements: The elements of $C_2(X)$ are classes of elements of the form $\{x\}$ or $\{x_1,x_2\}$ for $x_1\neq x_2$. The class in $X\times X$ corresponding to $\{x\}$ contains only $(x,x)$, and the class of $\{x_1,x_2\}$ contains exactly $(x_1,x_2)$ and $(x_2,x_1)$.

The topologies on $C_2(X)$ and $\SP^2(X)$ induced by the quotient from $X^2$ coincide, therefore the map $h:C_2(X) \to \SP^2(X)$ given by $h(\{x\}) = [x,x]$ and $h(\{x_1,x_2\}) = [x_1,x_2]$ is a homeomorphism.
\end{proof}

\section{$\Psi(\mathcal U) \subsetneq \Psi(\mathcal F)$}

\begin{thm}
Any union of equicardinal maps naturally induces an $n$-fold map. That is, $\Psi(\mathcal U) \subseteq \Psi(\mathcal F)$.
\end{thm}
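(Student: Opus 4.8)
The plan is to first reduce to the case of a single equicardinal map, and then assemble a general union by taking a disjoint union of coverings. The key observation is that every equicardinal map is already an $n$-fold map in Crabb's sense, realized by a tautological covering sitting over the configuration space.

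First I would treat a single $k$-valued map $g:X\to D_k(Y)$. Form the total space
\[ \bar X_g = \{(x,y)\in X\times Y \mid y\in g(x)\}, \]
with $p_g:\bar X_g\to X$ the first projection and $\bar g:\bar X_g\to Y$ the second projection. I would then show that $p_g$ is a $k$-fold covering. For local triviality, fix $x_0\in X$ with $g(x_0)=\{y_1,\dots,y_k\}$; since $Y$ is Hausdorff, choose pairwise disjoint open sets $V_j\ni y_j$. Using the continuity of $g$ as a map into $D_k(Y)$ (equivalently, the lower and upper semicontinuity of the associated multimap), there is a neighborhood $U$ of $x_0$ such that for every $x\in U$ the set $g(x)$ meets each $V_j$ in exactly one point. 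This yields $k$ continuous local sections whose images are disjoint and cover $p_g^{-1}(U)$, exhibiting $p_g$ as a covering of degree $k$. By construction $\bar g(p_g^{-1}(x))=g(x)$, so $g=\bar g/p_g$ lies in $\mathcal F$.

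Next I would handle a general element $f=\{f_1,\dots,f_m\}\in\mathcal U$ with $f_i:X\to D_{k_i}(Y)$ and $\sum_i k_i=n$. Applying the previous step to each $f_i$ produces coverings $p_i:\bar X_i\to X$ of degree $k_i$ together with maps $\bar f_i:\bar X_i\to Y$. I would set
\[ \bar X = \bar X_1 \sqcup \dots \sqcup \bar X_m, \]
let $p:\bar X\to X$ restrict to $p_i$ on each summand, and let $\bar f:\bar X\to Y$ restrict to $\bar f_i$. Then $p$ is a covering of degree $k_1+\dots+k_m=n$, and
\[ \bar f(p^{-1}(x)) = \bigcup_{i=1}^m \bar f_i(p_i^{-1}(x)) = \bigcup_{i=1}^m f_i(x) = \Psi(f)(x). \]
Thus $\Psi(f)=\bar f/p\in\Psi(\mathcal F)$, which gives the desired containment $\Psi(\mathcal U)\subseteq\Psi(\mathcal F)$.

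I expect the main obstacle to be the verification that $p_g$ is genuinely a covering map, namely the local triviality argument. Although this is standard in the theory of equicardinal maps, it must be carried out carefully using the Hausdorff separation of the finitely many values together with the continuity of $g$ into the configuration space; once this is in place, the disjoint-union step and the computation of $\bar f\circ p^{-1}$ are routine.
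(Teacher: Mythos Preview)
Your proof is correct and follows essentially the same approach as the paper: both construct the graph $\{(x,y)\mid y\in f_i(x)\}$ of each equicardinal $f_i$, use the first and second projections as the covering and the map respectively, and then take the disjoint union over $i$ to obtain the $n$-fold covering realizing $\Psi(f)$. In fact you supply more detail than the paper does on the local triviality of the graph projection, which the paper simply asserts ``can be shown.''
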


\begin{proof}
Take some at-most-$n$-valued map $f\in \Psi(\mathcal U)$, we will show that $f\in \Psi(\mathcal F)$. Since $\Psi:\mathcal U \to \mathcal C$ is an inclusion map, we may directly consider $f$ itself as a union of equicardinal maps.
Specifically, say $f = f_1 \cup \dots \cup f_k$, where $f_i:X\multimap Y$ is exactly $n_i$-valued and $n_1 + \dots + n_k = n$.

Each map $f_i$ corresponds to an $n_i$-fold map as follows: Let $\Gamma_i \subset X\times Y$ be the graph of $f_i$, given by
\[ \Gamma_i = \{(x,y)\in X\times Y \mid y\in f_i(x) \}. \]
Let $p_i:\Gamma_i \to X$ be the projection map $p_i(x,y) = x$. Since $f_i$ is exactly $n_i$-valued, it can be shown that $p_i:\Gamma_i \to X$ is an $n_i$-fold covering map. Furthermore, there is a single-valued map $\hat f_i: \Gamma_i \to Y$ given by $\hat f_i(x,y) = y$.

Now let $\hat X$ be the disjoint union
\[ \hat X = \bigsqcup_i \Gamma_i \]
and let $p:\hat X \to X$ be the disjoint union of the maps $p_i$. Then $p:\hat X \to X$ is an $n$-fold covering map. Let $\hat f:\hat X \to Y$ be the disjoint union of the maps $\hat f_i$, and we note that $\hat f \circ p^{-1}(x) = f(x)$ for all $x$. Thus $f = \Psi(\hat f/p)$, and so $f \in \Psi(\mathcal F)$ as desired.
\end{proof}

%

\begin{thm}
$\Psi(\mathcal U) \neq \Psi(\mathcal F)$
\end{thm}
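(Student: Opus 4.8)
The containment $\Psi(\mathcal U)\subseteq\Psi(\mathcal F)$ is already in hand, so the plan is to exhibit a single $n$-fold map whose underlying at-most-$n$-valued map is \emph{not} a union of equicardinal maps. I would work in the smallest nontrivial case $n=2$. A preliminary observation guiding the search: over a circle any single coincidence point can be used to ``reroute'' the two branches, so every at-most-$2$-valued $n$-fold map over $S^1$ actually splits; the obstruction must therefore be global and seems to require a base whose fundamental group has rank at least two. Accordingly I would take $X=S^1\vee S^1$, with the two circles representing generators $a,b\in\pi_1(X)$.

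For the construction, let $p:\bar X\to X$ be the connected double cover determined by the homomorphism $\pi_1(X)\to\Z/2$ sending each of $a,b$ to the nontrivial element, so that traversing either loop interchanges the two sheets. I would choose a single-valued map $\bar f:\bar X\to Y$ (with $Y=\R$, say) so that $f=\bar f/p$ is at-most-$2$-valued, has at least one coincidence point lying on the loop $a$ (where the two sheet-values agree), but is \emph{exactly} $2$-valued along all of $b$ (the two sheet-values stay distinct there). By construction $f\in\Psi(\mathcal F)$, and its semicontinuity is automatic via Theorem \ref{mapcorrespondence}.

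It then remains to prove $f\notin\Psi(\mathcal U)$. For $n=2$ a union of equicardinal maps is, after absorbing any smaller piece into a larger one, either (i) a single exactly-$2$-valued map, or (ii) a union $\{g_1,g_2\}$ of two single-valued maps. Option (i) is impossible because $f$ has a coincidence point and so is not exactly $2$-valued. To exclude (ii) I would observe that a decomposition $f=\{g_1,g_2\}$ yields a continuous global selection $x\mapsto g_1(x)\in f(x)$, and then restrict this selection to the loop $b$. Over $b$ the map $f$ is coincidence-free and $2$-valued with transposition monodromy, i.e.\ it corresponds to the connected double cover $p^{-1}(b)\to b$; since the two values remain distinct there, a continuous selection of values over $b$ lifts to a continuous section of this connected double cover, which is impossible. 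Hence no such $g_1$ exists, (ii) fails, and $f\notin\Psi(\mathcal U)$ while $f\in\Psi(\mathcal F)$, giving the strict inequality.

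The main obstacle I anticipate is the rigorous exclusion of (ii). One must (a) justify that \emph{every} way of writing $f$ as a union of equicardinal maps collapses to case (i) or (ii) when $n=2$, so that no exotic overlapping union evades the argument; and (b) carefully convert a continuous selection of \emph{values} into a continuous section of the covering $p^{-1}(b)\to b$, which uses precisely that $f$ is fiberwise injective (exactly $2$-valued) along $b$ so that the two sheets can be separated continuously. The explicit construction and its semicontinuity are routine, so the essential content is this covering-space non-section argument over the loop $b$.
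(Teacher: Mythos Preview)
Your strategy is right, but the specific choice $Y=\mathbb{R}$ makes the construction impossible. With the cover you chose, the monodromy over $b$ is a transposition, so the two lifts of $b$ run from $v_1$ to $v_2$ and from $v_2$ to $v_1$ (where $\{v_1,v_2\}=p^{-1}(x_0)$). Composing with $\bar f:\bar X\to\mathbb{R}$ gives two real-valued paths exchanging the distinct endpoint values $\bar f(v_1)$ and $\bar f(v_2)$; by the intermediate value theorem they must cross, so $f$ \emph{must} have a coincidence on $b$. More directly: for any $X$ whatsoever, an at-most-$2$-valued map into $\mathbb{R}$ splits as $\{\max f,\min f\}$, so $\Psi(\mathcal U)=\Psi(\mathcal F)$ whenever the codomain is $\mathbb{R}$. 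The obstruction you are looking for lives in the codomain, not in the rank of $\pi_1(X)$.

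The fix is to take $Y=S^1$. Let $\bar f$ restrict to a homeomorphism $p^{-1}(b)\xrightarrow{\cong}S^1$; then $f|_b$ is the square-root $2$-valued map, which is exactly $2$-valued and admits no continuous single-valued selection. On $p^{-1}(a)$ you may freely introduce a coincidence. Your exclusion of (ii) then goes through, and in fact handles your concern (a) as well: any $k_i$-valued piece $f_i$ has $f_i(x)\subseteq f(x)$, so $k_i\le 2$; a $2$-valued piece is impossible at the coincidence point, and a $1$-valued piece restricts to a selection of $f|_b$, which does not exist.

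For comparison, the paper's example uses the same mechanism but in a smooth setting: $X=\mathbb{R}P^2$, $Y=S^2$, $p$ the antipodal double cover, and $\bar f(x,y,z)=(x,y,|z|)$. The poles supply the coincidence, and the restriction to the equator $A\cong S^1$ is the unsplittable $2$-valued map $[z]\mapsto\{z,-z\}$ into the equatorial circle of $S^2$. Your corrected wedge-of-circles example is essentially the one-dimensional core of this.
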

\begin{proof}
We must find some spaces $X,Y$, some natural number $n$, and some at-most-$n$-valued map $f:X\multimap Y$ with $f\in \Psi(\mathcal F)$ but $f\not \in \Psi(\mathcal U)$. We take $n=2$ with $X$ the projective plane $X=\R P^2$ and $Y$ the sphere $Y=S^2$. We view $\R P^2$ as $S^2\subset \R^3$ with antipodal points identified. For some point $(x,y,z)\in S^2$, let $[(x,y,z)]$ be the corresponding point of $\R P^2$.

We define a 2-fold map as follows: let $p:S^2 \to \R P^2$ be the standard 2-fold covering, given by $p(x,y,z) = [(x,y,z)]$. Let $\bar f: S^2 \to S^2$ be defined by $f(x,y,z) = (x,y,|z|)$. Then $(\bar f/p)$ is a 2-fold map of $\R P^2 \to S^2$.

Let $f = \Psi(\bar f/p) = \bar f \circ p^{-1}:\R P^2 \multimap S^2$, which is given in coordinates as:
\[ \bar f \circ p^{-1}([(x,y,z)]) = \bar f(\{(x,y,z),(-x,-y,-z)\}) = \{(x,y,|z|),(-x,-y,|z|)\}. \]
Here we see that the set $f([(x,y,z)])$ has cardinality 1 when
\[(x,y,z) \in \{(0,0,1), (0,0,-1)\},\]
and cardinality 2 otherwise.

Clearly $f\in \Psi(\mathcal F)$, and we complete the proof by showing that $f\not\in \Psi(\mathcal U)$, that is, $f$ is not a union of equicardinal maps. According to the description above we see that $f$ is not exactly 2-valued, and we must also show that it is not a union of two single-valued maps.

Let $A = \{[(x,y,z)] \in \R P^2 \mid z = 0\}$. Then $f$ restricts on $A$ to the 2-valued map $f|_A([(x,y,0)]) = \{(x,y,0),(-x,-y,0)\}$. This restriction is equivalent to the map on $S^1$ given by $e^{i\theta} \mapsto \{e^{i\theta},e^{-i\theta}\}$ (viewing $S^1$ as complex numbers of modulus 1), and this 2-valued map cannot be expressed as a union of single-valued maps. Thus $f|_A$ is not a union of two single-valued maps, and so $f$ itself is not a union of two single-valued maps.
\end{proof}

Note that in the example above, the map $\bar f:S^2 \to S^2$ is not surjective, and thus it is homotopic to a constant. This means that $\bar f / p$ is homotopic as an $n$-fold map to a constant map, and so $\Psi(\bar f/p)$ is homotopic to a map in $\mathcal U$, even though it is not itself in $\mathcal U$. It is natural to ask which $n$-fold maps are homotopic to maps from $\mathcal U$ in this way. We do not have general results on this question, but we present the following as an example, featuring an interesting application of the Borsuk-Ulam Theorem.

\begin{prop}
Let $f/p: \R P^m \to \R P^m$ be a $2$-fold map where $p:S^m \to \R P^m$ is the standard double covering, and $f:S^m \to S^m$ has odd degree. Then $f/p$ is homotopic to an $n$-fold map $\alpha/p$ with $\Psi(\alpha/p) \in \Psi(\mathcal U)$.
\end{prop}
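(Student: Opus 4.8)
The plan is to exploit the fact that both $2$-fold maps in the desired homotopy use the \emph{same} covering $p:S^m\to\R P^m$. Recall that a homotopy in $\mathcal F$ between $(\bar g/p)$ and $(\bar g'/p')$ requires a homeomorphism $h:\bar X\to\bar X'$ with $p=p'\circ h$ and $\bar g\simeq \bar g'\circ h$. Taking $p'=p$ and $h=\id_{S^m}$, this condition collapses to an ordinary homotopy $f\simeq\alpha$ of maps $S^m\to S^m$. So it suffices to find $\alpha\simeq f$ with $\Psi(\alpha/p)\in\Psi(\mathcal U)$. Since $\Psi(\alpha/p)([x])=\{\alpha(x),\alpha(-x)\}$, this multimap is \emph{exactly} $2$-valued precisely when $\alpha(x)\neq\alpha(-x)$ for every $x$, and an exactly $2$-valued map is by definition a single element of $\mathcal U$. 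Thus the whole problem reduces to homotoping $f$ to a map $\alpha$ with $\alpha(x)\neq\alpha(-x)$ everywhere.

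The cleanest way to guarantee $\alpha(x)\neq\alpha(-x)$ is to make $\alpha$ \emph{antipode-preserving}, i.e.\ $\alpha(-x)=-\alpha(x)$; then $\alpha(x)=\alpha(-x)$ would force $\alpha(x)=-\alpha(x)$, impossible on $S^m$. Because homotopy classes of self-maps of $S^m$ are classified by degree (Hopf's theorem), and $\deg f$ is odd by hypothesis, it is enough to exhibit \emph{one} antipode-preserving map of each odd degree $d$. For $m=1$ take $z\mapsto z^d$ on $S^1\subset\C$, which is antipode-preserving exactly because $d$ is odd. For $m\ge 2$ I would iterate the unreduced suspension, realized as a join with identities: writing $S^m=S^1\ast S^{m-2}$, the join $(z\mapsto z^d)\ast\id_{S^{m-2}}$ has degree $d\cdot 1=d$, and the antipodal involution on the join acts as the simultaneous antipode on both join factors, so a join of antipode-preserving maps is again antipode-preserving. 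This produces an antipode-preserving $\alpha$ of degree $d$, whence $\alpha\simeq f$.

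The Borsuk--Ulam theorem enters to explain why the odd-degree hypothesis is exactly the right one, and it is the conceptual heart of the example. If $\alpha$ satisfies $\alpha(x)\neq\alpha(-x)$ for all $x$, then $\beta(x)=(\alpha(x)-\alpha(-x))/|\alpha(x)-\alpha(-x)|$ is a well-defined antipode-preserving map $S^m\to S^m$, so by Borsuk--Ulam it has odd degree; the straight-line homotopy $H_t(x)=(\alpha(x)-t\,\alpha(-x))/|\alpha(x)-t\,\alpha(-x)|$ stays well-defined (the numerator never vanishes for $t\in[0,1]$) and shows $\deg\alpha=\deg\beta$ is odd. Hence an exactly $2$-valued representative of $f/p$ can exist \emph{only} when $\deg f$ is odd, matching our hypothesis. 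Combining the two directions, $f/p\simeq\alpha/p$ through the identity homeomorphism, and $\Psi(\alpha/p)$ is exactly $2$-valued, so $\Psi(\alpha/p)\in\Psi(\mathcal U)$, as required.

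I expect the main obstacle to be the construction in the second paragraph: producing an antipode-preserving representative of a prescribed odd degree, together with the verification that the join (iterated suspension) is genuinely equivariant for the antipodal actions. The reduction via $h=\id$, and the membership of an exactly $2$-valued map in $\mathcal U$, are routine; the degree bookkeeping and the Borsuk--Ulam computation are standard once the equivariant model is set up.
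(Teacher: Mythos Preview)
Your proof is correct and follows essentially the same strategy as the paper: reduce the homotopy of $2$-fold maps (via $h=\id$) to an ordinary homotopy $f\simeq\alpha$ of maps $S^m\to S^m$, and then choose $\alpha$ of the same odd degree with $\alpha(x)\neq\alpha(-x)$ for all $x$, so that $\Psi(\alpha/p)$ is exactly $2$-valued and hence lies in $\Psi(\mathcal U)$. The only difference is in how $\alpha$ is produced: the paper simply invokes \cite[Theorem~5.3]{CoVend} (a Borsuk--Ulam-type statement guaranteeing such a map exists for every odd degree), while you give an explicit construction of an antipode-preserving map of degree $d$ via the join $(z\mapsto z^d)\ast\id_{S^{m-2}}$; your final paragraph on the necessity of odd degree is a correct sharpening but is not needed for the proposition itself.
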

\begin{proof}
We use the following form of the Borsuk-Ulam theorem provided  by \cite[Theorem 5.3]{CoVend}: if $k$ is even, then every map $g:S^m\to S^m$ of degree $k$ satisfies $g(x)=g(-x)$ for some $x$; and if $k$ is odd then there is some map $g$ of degree $k$ with $g(x)\neq g(-x)$ for all $x$.

Applied to our present situation, since $f$ has odd degree, there is some map $\alpha:S^n\to S^n$ of the same degree with $\alpha(x)\neq \alpha(-x)$ for all $x$. This means that $\Psi(\alpha/p)(x) = \alpha\circ p^{-1}(x)$ has cardinality exactly 2 for each $x$, and thus $\Psi(\alpha/p) \in \Psi(\mathcal U)$. Since $f$ and $\alpha$ have the same degree we have $f\simeq \alpha$ and thus $f/p \simeq \alpha/p$ as desired.
\end{proof}

\section{$\Psi(\mathcal F) \subsetneq \Psi(\mathcal S)$}
Recall that, by functoriality of the symmetric product, a single-valued map $f:X\to Y$ naturally induces a map of $\SP^n(X) \to \SP^n(Y)$ which we denote by $\SP^n(f)$. In this way, if $g:X\to \SP^n(Y)$ and $f:Y\to Z$, then we may form the composition $\SP^n(f) \circ g: X\to \SP^n(Z)$.
Let $\iota:D_n(X) \to \SP^n(X)$ be the map given by $\iota(\{x_1,\dots, x_n\}) = [x_1,\dots,x_n]$.

Recall that for $f\in \mathcal S$, we have $\Psi(f) = u\circ f$, where $u:\SP^n(X) \to C_n(X)$ is the map $u([x_1,\dots,x_n]) = \{x_1,\dots,x_n\}$.

First we consider a convenient way to interpret a map from $\mathcal F$ directly as a map in $\mathcal S$. Let $i:\mathcal F \to \mathcal S$ be the following map:
\[ i(\bar f/p) = \SP^n(\bar f) \circ \iota \circ p^{-1}. \]
This map $i$ respects the interpretation maps as follows:
\begin{lem}
Let $i:\mathcal F \to \mathcal S$ be as above. Then $\Psi_{\mathcal F} = \Psi_{\mathcal S} \circ i$.
\end{lem}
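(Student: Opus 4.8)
The plan is to verify the claimed equality $\Psi_{\mathcal F} = \Psi_{\mathcal S}\circ i$ pointwise. Both sides are functions $\mathcal F\to\mathcal C$, and two elements of $\mathcal C$ agree exactly when they agree as at-most-$n$-valued multimaps, so it suffices to fix an arbitrary $n$-fold map $\bar f/p\in\mathcal F$ (with $p:\bar X\to X$ an $n$-fold covering and $\bar f:\bar X\to Y$) together with an arbitrary point $x\in X$, and show that the two resulting subsets of $Y$ coincide.

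First I would unpack the left-hand side. By definition $\Psi_{\mathcal F}(\bar f/p)=\bar f\circ p^{-1}$, so writing $p^{-1}(x)=\{x_1,\dots,x_n\}$ for the $n$ sheets of the covering over $x$, we obtain $\Psi_{\mathcal F}(\bar f/p)(x)=\bar f(p^{-1}(x))=\{\bar f(x_1),\dots,\bar f(x_n)\}$, where any coincidences among the values $\bar f(x_i)$ are collapsed since this is a set. Next I would chase the right-hand side through the composition defining $i$. Viewing $p^{-1}$ as the equicardinal map $X\to D_n(\bar X)$ sending $x$ to $\{x_1,\dots,x_n\}$, the map $\iota$ sends this to $[x_1,\dots,x_n]\in\SP^n(\bar X)$, and $\SP^n(\bar f)$ sends this to $[\bar f(x_1),\dots,\bar f(x_n)]\in\SP^n(Y)$; hence $i(\bar f/p)(x)=[\bar f(x_1),\dots,\bar f(x_n)]$. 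Applying $\Psi_{\mathcal S}=u\circ(-)$ and recalling that $u$ takes the underlying set, we get $\Psi_{\mathcal S}(i(\bar f/p))(x)=u([\bar f(x_1),\dots,\bar f(x_n)])=\{\bar f(x_1),\dots,\bar f(x_n)\}$, again with duplicates collapsed. Comparing the two expressions shows they agree, completing the verification.

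This argument is essentially a direct definition-chase, so I do not expect a serious obstacle. The only point requiring care is the identification of $p^{-1}$ with a genuine equicardinal map $X\to D_n(\bar X)$; this is precisely where the hypothesis that $p$ is an $n$-fold covering is used, guaranteeing that each fiber consists of exactly $n$ distinct points so that $\iota$ is applicable. Once this identification is in place, the match reduces to the observation that forming the image set $\bar f(p^{-1}(x))$ agrees with first forming the symmetric-product class $[\bar f(x_1),\dots,\bar f(x_n)]$ and then passing to its underlying set via $u$, so that the order-forgetting in $\SP^n$ followed by duplicate-removal in $u$ reproduces exactly the set-image computed directly by $\bar f\circ p^{-1}$.
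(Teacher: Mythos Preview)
Your proof is correct and follows essentially the same approach as the paper: a direct pointwise definition chase, writing $p^{-1}(x)=\{x_1,\dots,x_n\}$, computing both sides as $\{\bar f(x_1),\dots,\bar f(x_n)\}$, and noting they agree. The paper's version is slightly terser but the argument is identical.
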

\begin{proof}
Take $\bar f/p \in \mathcal F$, and let $x\in X$ with $p^{-1}(x) = \{x_1,\dots, x_n\}$. Following the constructions above, we will have $\SP^n(\bar f) \circ \iota \circ p^{-1}(x) = [\bar f(x_1),\dots, \bar f(x_n)]$, and $\bar f \circ p^{-1}(x) = \{f(x_1),\dots, f(x_n)\}$. That is, $\bar f \circ p^{-1} = u \circ \SP^n(\bar f) \circ \iota \circ p^{-1}$. Thus we have:
\[ \Psi_{\mathcal S}(i(\bar f/p)) = u \circ \SP^n(\bar f) \circ \iota \circ p^{-1} = \bar f \circ p^{-1} = \Psi_{\mathcal F}(\bar f/p) \]
as desired.
\end{proof}

It will follow that $\Psi(\mathcal F) \subseteq \Psi(\mathcal S)$:
\begin{thm}
Any $n$-fold map naturally induces a map into the $n$th symmetric product. That is, $\Psi(\mathcal F) \subseteq \Psi(\mathcal S)$.
\end{thm}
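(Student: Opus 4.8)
The plan is to deduce the containment almost immediately from the preceding lemma, once we verify that the map $i:\mathcal F\to\mathcal S$ is genuinely well-defined, i.e. that $i(\bar f/p)=\SP^n(\bar f)\circ\iota\circ p^{-1}$ really is a continuous single-valued map $X\to\SP^n(Y)$. Granting this, the argument is very short: given any element of $\Psi(\mathcal F)$, write it as $\Psi_{\mathcal F}(\bar f/p)$ for some $n$-fold map $\bar f/p$. By the lemma just proved, $\Psi_{\mathcal F}(\bar f/p)=\Psi_{\mathcal S}(i(\bar f/p))$, and since $i(\bar f/p)\in\mathcal S$, the right-hand side lies in $\Psi(\mathcal S)$. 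As $\bar f/p$ was arbitrary, this gives $\Psi(\mathcal F)\subseteq\Psi(\mathcal S)$.

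So the real content is establishing the continuity of $i(\bar f/p)$. Since $\SP^n(\bar f)$ is continuous by functoriality of the symmetric product, it suffices to show that the ``unordered preimage'' map $\iota\circ p^{-1}:X\to\SP^n(\bar X)$, sending $x$ to the class $[x_1,\dots,x_n]$ of its fiber $p^{-1}(x)=\{x_1,\dots,x_n\}$, is continuous. First I would note that because $p$ is an $n$-fold covering, each fiber consists of exactly $n$ distinct points, so $p^{-1}(x)\in D_n(\bar X)$ and $\iota$ applies; moreover the resulting assignment is well-defined and single-valued precisely because passing to $\SP^n(\bar X)$ forgets the ordering of the fiber.

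For continuity I would argue locally using the covering structure. I would cover $X$ by evenly-covered open sets $U$; over each such $U$ the covering $p$ restricts to a trivial bundle, yielding continuous local sections $s_1,\dots,s_n:U\to\bar X$ with $p^{-1}(x)=\{s_1(x),\dots,s_n(x)\}$ for all $x\in U$. The map $x\mapsto(s_1(x),\dots,s_n(x))$ is continuous into $\bar X^n$, and composing with the quotient $\bar X^n\to\SP^n(\bar X)$ shows that $\iota\circ p^{-1}$ is continuous on $U$. Because the value in $\SP^n(\bar X)$ does not depend on the labeling of the sheets, the local descriptions agree on overlaps and patch to a globally continuous map. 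Composing with $\SP^n(\bar f)$ then gives $i(\bar f/p)\in\mathcal S$, which closes the argument.

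I expect the main obstacle to be precisely this local-to-global continuity check: one must make sure that the independently chosen local sheet-labelings over different evenly-covered charts really do determine the same point of $\SP^n(\bar X)$ on overlaps, so that the locally continuous pieces glue into a single continuous map. Everything else is a formal consequence of the lemma and of the functoriality of $\SP^n$.
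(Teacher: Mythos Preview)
Your proposal is correct and follows exactly the paper's approach: apply the preceding lemma $\Psi_{\mathcal F}=\Psi_{\mathcal S}\circ i$ to conclude $\Psi_{\mathcal F}(\mathcal F)=\Psi_{\mathcal S}(i(\mathcal F))\subseteq\Psi_{\mathcal S}(\mathcal S)$. The only difference is that you supply a careful local-sections argument for the continuity of $\iota\circ p^{-1}:X\to\SP^n(\bar X)$, a point the paper simply takes for granted when it declares $i:\mathcal F\to\mathcal S$; your extra verification is sound and makes the argument more complete, but it is not a different route.
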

\begin{proof}
By the lemma above, we have $\Psi_{\mathcal F}(\mathcal F) = \Psi_{\mathcal S}(i(\mathcal F)) \subset \Psi_{\mathcal S}(\mathcal S)$ as desired.
\end{proof}

Now we proceed to show that $\Psi(\mathcal F) \neq \Psi(\mathcal S)$. This will require a lemma about homotopies of maps in $\mathcal F$ and $\mathcal S$.

\begin{lem}\label{Fhtp}
Let $i:\mathcal F \to \mathcal S$ be given by:
\[ i(\bar f/p) = \SP^n(\bar f) \circ \iota \circ p^{-1}. \]
If $\bar f,\bar f': \bar X \to Y$ are homotopic with some finite cover $p:\bar X \to X$, then $i(f/p)$ and $i(f'/p)$ are homotopic as symmetric product maps.
\end{lem}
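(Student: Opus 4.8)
The plan is to reduce the statement to the fact that the symmetric product is a homotopy functor. Observe that $i(\bar f/p) = \SP^n(\bar f)\circ g$ and $i(\bar f'/p) = \SP^n(\bar f')\circ g$, where $g = \iota\circ p^{-1}:X\to \SP^n(\bar X)$ is a single fixed map, the same for both (here I use that $p$ is $n$-fold, so $p^{-1}(x)\in D_n(\bar X)$, and that $\iota\circ p^{-1}$ is continuous). Consequently, if I can produce a homotopy $\SP^n(\bar f)\simeq \SP^n(\bar f')$ of maps $\SP^n(\bar X)\to \SP^n(Y)$, then precomposing that homotopy with $g$ yields a homotopy $i(\bar f/p)\simeq i(\bar f'/p)$ of symmetric product maps, which is exactly what is required.

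To build the homotopy between $\SP^n(\bar f)$ and $\SP^n(\bar f')$, let $H:\bar X\times I\to Y$ be a homotopy with $H(\cdot,0)=\bar f$ and $H(\cdot,1)=\bar f'$. I would define a candidate
\[ \tilde H:\SP^n(\bar X)\times I\to \SP^n(Y),\qquad \tilde H([x_1,\dots,x_n],t)=[H(x_1,t),\dots,H(x_n,t)]. \]
This is well defined because permuting the entries $x_1,\dots,x_n$ permutes $H(x_1,t),\dots,H(x_n,t)$ in the same way, leaving the class in $\SP^n(Y)$ unchanged; at the endpoints it restricts to $\SP^n(\bar f)$ and $\SP^n(\bar f')$ by construction.

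The crux is the continuity of $\tilde H$. I would first note that the map $\bar X^n\times I\to Y^n$ sending $((x_1,\dots,x_n),t)$ to $(H(x_1,t),\dots,H(x_n,t))$ is continuous, hence so is its composite with the quotient $q_Y:Y^n\to\SP^n(Y)$. Writing $q_{\bar X}:\bar X^n\to\SP^n(\bar X)$ for the quotient defining the symmetric product, this composite is constant on the fibers of $q_{\bar X}\times\id_I$, so it descends to a set-theoretic map agreeing with the formula for $\tilde H$. The essential point is that $q_{\bar X}\times\id_I$ is itself a quotient map: this holds because $I$ is locally compact Hausdorff (Whitehead's theorem that the product of a quotient map with the identity on a locally compact space is again a quotient map). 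Granting this, $\tilde H$ is continuous, and I expect this quotient-map-times-identity step to be the main technical obstacle, since continuity of $\tilde H$ does not follow merely from applying functoriality of $\SP^n$ pointwise in $t$.

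Finally I would set $G=\tilde H\circ(g\times\id_I):X\times I\to\SP^n(Y)$, which is continuous as a composite, satisfies $G(\cdot,0)=\SP^n(\bar f)\circ g=i(\bar f/p)$ and $G(\cdot,1)=\SP^n(\bar f')\circ g=i(\bar f'/p)$, and is therefore the desired homotopy of maps in $\mathcal S$.
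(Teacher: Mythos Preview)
Your proof is correct and follows essentially the same route as the paper: both construct the homotopy $X\times I\to\SP^n(Y)$ by pushing the given homotopy $H$ through $\SP^n$ and precomposing with $\iota\circ p^{-1}$; the paper simply writes down $H'(x,t)=\SP^n(\bar f_t)\circ\iota\circ p^{-1}(x)$ and asserts its continuity. Your argument is in fact more careful, since you address the continuity point the paper skips over---namely that $q_{\bar X}\times\id_I$ is a quotient map (via local compactness of $I$), which is exactly what is needed to know that the induced map on $\SP^n(\bar X)\times I$ is continuous.
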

\begin{proof}
Let $H:\bar X \times [0,1] \to Y$ be a homotopy from $\bar f$ to $\bar f'$. For each $t\in [0,1]$, let $\bar f_t(x) = H(x,t)$, so that each $(\bar f_t/p)$ is an $n$-fold map with $\bar f_0 = \bar f$ and $\bar f_1 = \bar f'$. Then we can construct a homotopy $H': X\times [0,1] \to \SP^n(Y)$ given by
\[ H'(x,t) = \SP^n(\bar f_t) \circ \iota \circ p^{-1}(x). \]
This $H'$ is a continuous homotopy $H':X\times [0,1] \to \SP^n(Y)$ from $i(\bar f/p)$ to $i(\bar f'/p)$.
\end{proof}

\begin{thm}\label{FneqS}
$\Psi(\mathcal F) \neq \Psi(\mathcal S)$.
\end{thm}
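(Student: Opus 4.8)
The plan is to exploit the structural difference between the two classes: a map in $\Psi(\mathcal{F})$ is built from an honest (unramified) covering space, whereas a general symmetric product map may \emph{branch}, allowing two sheets to collide at an isolated point while carrying nontrivial monodromy around it. Since the inclusion $\Psi(\mathcal{F}) \subseteq \Psi(\mathcal{S})$ was established above, it suffices to produce a single at-most-$n$-valued map lying in $\Psi(\mathcal{S}) \setminus \Psi(\mathcal{F})$.

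Concretely, I would take $n = 2$, $X = \C$, and $Y = \C$, and define $f: X \to \SP^2(Y)$ by sending $t$ to the unordered pair of roots of $z^2 - t$, that is, $f(t) = [\sqrt{t}, -\sqrt{t}]$. Using the identification $\SP^2(\C) \cong \C^2$ via the elementary symmetric functions $[a,b] \mapsto (a+b,\, ab)$, this map becomes $f(t) = (0, -t)$, which is manifestly continuous; hence $f \in \mathcal{S}$, and $F := \Psi_{\mathcal S}(f) = u \circ f$ is the at-most-$2$-valued map $t \mapsto \{\sqrt{t}, -\sqrt{t}\}$. Note that $\#F(t) = 2$ for $t \neq 0$ while $\#F(0) = 1$, so the two branches collide precisely at the origin.

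The crux is to show $F \notin \Psi(\mathcal{F})$. Suppose toward a contradiction that $F = \bar f \circ p^{-1}$ for some finite cover $p : \bar X \to \C$ and map $\bar f : \bar X \to \C$. Because $\C$ is simply connected, every finite cover of $\C$ is trivial, so $\bar X$ is a disjoint union of copies of $\C$ and $\bar f$ is recorded by finitely many single-valued maps $g_1, \dots, g_m : \C \to \C$ with $F(t) = \{g_1(t), \dots, g_m(t)\}$ for all $t$. Restricting to $\C^{*} = \C \setminus \{0\}$, each value $g_i(t)$ lies in $\{\sqrt{t}, -\sqrt{t}\}$ and hence satisfies $g_i(t)^2 = t$; that is, each $g_i|_{\C^{*}}$ would be a continuous single-valued branch of the square root on $\C^{*}$. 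No such branch exists, since monodromy around the origin interchanges $\sqrt{t}$ and $-\sqrt{t}$. This contradiction yields $F \notin \Psi(\mathcal{F})$, completing the proof.

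The main obstacle is ensuring that \emph{no} covering whatsoever can realize $F$, rather than merely ruling out the obvious candidate. The device that settles this is the choice of a simply connected domain: the isolated collision of the two branches at $t = 0$ forces the putative cover to be defined over all of $\C$, where simple connectivity permits only trivial covers and thereby forbids exactly the monodromy that the symmetric product map $f$ carries. A secondary point worth verifying carefully is the continuity of $f$ at the branch point $t = 0$, which the symmetric-function coordinates $(0,-t)$ make transparent.
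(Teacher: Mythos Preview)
Your proof is correct and considerably more direct than the paper's. The paper argues by counting homotopy classes: with $X=S^5$, $Y=S^2$, $n=2$, simple connectivity of $S^5$ forces any $2$-fold cover to be trivial, so the image $i(\mathcal F)\subset\mathcal S$ meets at most $|\pi_5(S^2)|^2=4$ homotopy classes, whereas $[S^5,\SP^2(S^2)]=[S^5,\C P^2]=\pi_5(\C P^2)\cong\Z$ is infinite; invoking Proposition~\ref{samen=2} to identify $\Psi_{\mathcal S}$ with a bijection then yields a map in $\Psi(\mathcal S)\setminus\Psi(\mathcal F)$. Your argument replaces all of this homotopy-theoretic machinery with the explicit branched square-root map and a single monodromy obstruction on $\C^*$. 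What you gain is elementariness and an explicit witness; what the paper's approach buys is a stronger conclusion (infinitely many homotopy classes in $\Psi(\mathcal S)$ lie outside $\Psi(\mathcal F)$, for compact $X$ and $Y$) at the cost of importing nontrivial computations of $\pi_5(S^2)$ and $\pi_5(\C P^2)$.
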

\begin{proof}
Let $i:\mathcal F \to \mathcal S$ be the map defined above.
We must find some spaces $X,Y$, some natural number $n$, and some map $X\to \SP^n(Y)$ which does not correspond to an $n$-fold map. We take $n=2$ with $X = S^5$ and $Y=S^2$.

For spaces $X$ and $Y$, let $[X,Y]$ be the set of all free homotopy classes of (single-valued) maps from $X$ to $Y$, and we will make use of various facts from the literature about symmetric products of spheres, and the homotopy theory of maps of spheres.

Fix a particular 2-fold covering $p:\bar S^5 \to S^5$. Since $S^5$ is simply connected, this cover $\bar S^5$ must be homeomorphic to a disjoint union $\bar S^5 = S^5 \sqcup S^5$. We have:
\[ [S^5 \sqcup S^5, S^2] = [S^5, S^2] \times [S^5, S^2] = \pi_5(S^2) \times \pi_5(S^2), \]
and this homotopy group $\pi_5(S^2)$ is known to be isomorphic to $\Z_2$. Thus there are exactly 4 homotopy classes of 2-fold maps using this cover $p$. That is, there are 4 maps $f_1,f_2,f_3,f_4:\bar S^5 \to S^2$ such that any other map $f:\bar S^5 \to S^2$ must be homotopic to some $f_k$ for $k\in \{1,2,3,4\}$.

Applying $i$, we obtain 4 symmetric product maps $g_1 = i(f_1/p), \dots, g_4 = i(f_4/p)$. Then given any 2-fold map $f/p$, we have:
\begin{equation}\label{f/peq}
i(f/p) \simeq g_k \text{ for some } k\in \{1,2,3,4\}
\end{equation}
where the homotopy above is a homotopy of symmetric product maps.

Since $S^5$ is simply connected, the covering map $p$ is essentially the only 2-fold covering space. Specifically, if $\tilde p: \tilde S^5 \to S^5$ is another 2-fold covering map of $S^5$, then there is a homeomorphism $h: \bar S^5 \to \tilde S^5$ with $\tilde p \circ h = p$. If $\tilde f: \tilde S^5 \to S^2$ is a map, then we will have a map $\tilde f\circ h: \bar S^5 \to S^2$ and:
\[
\begin{split}
i((\tilde f \circ h) / p) &= \SP^n(\tilde f \circ h) \circ \iota \circ p^{-1} = \SP^n(\tilde f) \circ \SP^n(h) \circ \iota \circ p^{-1} \\
&= \SP^n(\tilde f) \circ \iota \circ \tilde p^{-1} = i(\tilde f / \tilde p)
\end{split}
\]
Thus for any 2-fold map $\tilde f/\tilde p$, the above and \eqref{f/peq} gives $i(\tilde f/\tilde p) = i(\tilde f \circ h/p) \simeq g_k$ for some $k\in \{1,2,3,4\}$.

So far we have shown that any symmetric product map $S^5 \to \SP^2(S^2)$ in the image of $i$ is homotopic to one of $g_1,g_2,g_3,g_4$. Next we show that that there are infinitely many homotopy classes of maps $S^5 \to \SP^2(S^2)$.

The paper \cite{blagrz2004} collects some examples of symmetric products of familiar spaces. Example (4) in Section 2.1 of that paper shows that $\SP^n(S^2)$ is homeomorphic to the complex projective plane $\C P^n$. Thus we have $[S^5, \SP^2(S^2)] = [S^5, \C P^2] = \pi_5(\C P^2)$, and this homotopy group is known to be isomorphic to $\Z$. Thus among symmetric product maps $S^5 \to \SP^2(S^2)$, there are infinitely many different homotopy classes.

Thus there is a map $g: S^5 \to \SP^2(S^2)$ which is not homotopic to any of $g_1,\dots g_4$. This means there is a map in $\mathcal S$ which is different from all maps in $i(\mathcal F)$.

By Proposition \ref{samen=2} we know that $\SP^2(X)$ is homeomorphic to $\mathcal C_2(X)$, and thus that $\Psi_{\mathcal S}$ is a bijection. Thus since $g \in \mathcal S - i(\mathcal F)$, we have $\Psi(g) \in \Psi_{\mathcal S}(\mathcal S) - \Psi_{\mathcal S}(i(\mathcal F)) = \Psi_{\mathcal S}(\mathcal S) - \Psi_{\mathcal F}(\mathcal F)$ as desired.
\end{proof}


\section{$\Psi(\mathcal S) = \Psi(\mathcal W)$}
In this section we show that the class of symmetric product maps is the same as the class of $\N$-weighted maps.

For a connected Hausdorff metric space $X$ and some $n\ge2$, the $n$th symmetric group $\Sigma_n$, acts on $X^n$ by the formula
\begin{equation*}
\sigma\cdot (x_1,...,x_n)=(x_{\sigma(1)},...,x_{\sigma(n)}).
\end{equation*}
If $(X^n,d_n)$ is a product metric space, then we define a metric $d_s$ in $\SP^n(X)$ by putting
\begin{equation}\label{metric}
d_s([x_1,...,x_n],[y_1,...,y_n])=\min \{d_n((x_1,...,x_n),\sigma\cdot (y_1,...,y_n))\mid \sigma\in S_n\}.
\end{equation}

\begin{lem}\label{q}
Let $p: X^n\to SP^nX$ be defined by $p((x_1,...,x_n))=[x_1,...,x_n]$. Then $p$ is an open map.
\end{lem}

\begin{proof}
It is straightforward.
\end{proof}

For convenience, we use the following notation for elements of the symmetric product $\SP^n(Y)$: an element of $\SP^n(Y)$ with repeated coordinates will be written as follows:
\[ [y_1^{k_1}, \dots, y_m^{k_m} ] = [\underbrace{y_1,\dots,y_1}_{\text{$k_1$ times}},\underbrace{y_2\dots,y_2}_{\text{$k_2$ times}},\dots,\underbrace{y_{m},\dots,y_{m}}_{\text{$k_{m}$ times}}] . \]

Our goal for this section is to show that $\Psi(\mathcal W) = \Psi(\mathcal S)$. This we do by proving two subset relations.

\begin{thm}\label{WsubsetS}
$\Psi(\mathcal W) \subseteq \Psi(\mathcal S)$
\end{thm}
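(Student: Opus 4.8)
The plan is to take an arbitrary $\N$-weighted map $\psi = (n_\psi, w_\psi)$ with weighted index $n$ and construct from it a symmetric product map $f_\psi : X \to \SP^n(Y)$ whose composition with $u$ recovers $\Psi_{\mathcal W}(\psi) = n_\psi$. The natural definition is to let $f_\psi(x)$ be the element of $\SP^n(Y)$ that records each point $y \in n_\psi(x)$ with multiplicity exactly $w_\psi(x,y)$; that is, in the notation just introduced, if $n_\psi(x) = \{y_1,\dots,y_m\}$ then
\[ f_\psi(x) = [\, y_1^{w_\psi(x,y_1)}, \dots, y_m^{w_\psi(x,y_m)} \,]. \]
Since $\psi$ is an $\N$-weighted map every weight on $n_\psi(x)$ is strictly positive, and since the weighted index is the constant sum $\sum_y w_\psi(x,y) = n$ (by the preceding lemma that $I_w(\psi)$ is well-defined), this expression is a genuine element of $\SP^n(Y)$. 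By construction $u \circ f_\psi = n_\psi$ as multivalued maps, so once continuity of $f_\psi$ is established we get $\Psi_{\mathcal S}(f_\psi) = u \circ f_\psi = n_\psi = \Psi_{\mathcal W}(\psi)$, giving the desired containment.

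The work therefore reduces to showing $f_\psi : X \to \SP^n(Y)$ is continuous, and this is the step I expect to be the main obstacle. I would use the metric $d_s$ on $\SP^n(Y)$ defined in \eqref{metric} and argue directly: fix $x_0 \in X$ and $\epsilon > 0$, and produce a neighborhood $V$ of $x_0$ on which $d_s(f_\psi(x), f_\psi(x_0)) < \epsilon$. Writing $n_\psi(x_0) = \{y_1,\dots,y_m\}$, choose pairwise disjoint open balls $U_i = B(y_i, \epsilon/2)$ around each $y_i$, small enough to be disjoint and to have $n_\psi(x_0) \cap \partial U_i = \emptyset$. Upper semicontinuity of $n_\psi$ gives a neighborhood on which $n_\psi(x) \subset \bigcup_i U_i$, so no mass escapes the balls; and condition (ii) of Definition \ref{definitionofweightedmap1309} applied to each $U_i$ gives a neighborhood on which the total weight inside each $U_i$ equals $w_\psi(x_0, y_i)$. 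Intersecting these finitely many neighborhoods yields $V$.

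The key point is that on $V$ the multiplicity count is preserved ball-by-ball: for $x \in V$ the element $f_\psi(x)$ assigns total multiplicity exactly $w_\psi(x_0,y_i)$ to points lying in $U_i$, matching the multiplicity of $y_i$ in $f_\psi(x_0)$, and every such point is within $\epsilon/2 < \epsilon$ of $y_i$. Thus one can choose a pairing (permutation) $\sigma$ in the definition of $d_s$ matching each of the $w_\psi(x_0,y_i)$ copies of $y_i$ to the corresponding points of $f_\psi(x)$ in $U_i$, which forces $d_s(f_\psi(x), f_\psi(x_0)) < \epsilon$. The subtlety to handle carefully is bookkeeping the total: since the weighted index is the constant $n$ and all the weight of $f_\psi(x)$ already lies in $\bigcup_i U_i$ with the correct per-ball totals, there is no leftover mass to misalign, so the matching is always possible. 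This verifies continuity and completes the argument that $\Psi(\mathcal W) \subseteq \Psi(\mathcal S)$.
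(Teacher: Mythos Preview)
Your proposal is correct and follows essentially the same approach as the paper: the construction of $f_\psi$ by recording each $y_i$ with multiplicity $w_\psi(x,y_i)$ is identical, and the continuity argument uses the same two ingredients (upper semicontinuity of $n_\psi$ to confine the support to small disjoint balls, and condition~(ii) of Definition~\ref{definitionofweightedmap1309} to preserve the per-ball weight totals). The only cosmetic difference is that the paper phrases continuity via basic open sets of the form $p\big(B(y_1,\eta)^{k_1}\times\cdots\times B(y_{m},\eta)^{k_{m}}\big)$ in $\SP^n(Y)$, while you work directly with the metric $d_s$; these are equivalent formulations. One small point to tidy: when you write ``$U_i = B(y_i,\epsilon/2)$ \dots\ small enough to be disjoint'' you should first shrink $\epsilon$ so that the balls of radius $\epsilon/2$ are genuinely pairwise disjoint, but this does not affect the argument.
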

\begin{proof}
Assume that $\varphi=(n_{\varphi}, w_{\varphi}): X\multimap Y$ is an $\mathbb{N}$-weighted map with weighted index $n$. We must construct a symmetric product map $f:X\to \SP^n(Y)$ satisfying $\Psi(f) = \Psi(\varphi)$. We construct $f$ as follows: Given $x\in X$, let $n_{\varphi(x)} = \{y_1,\dots,y_{m_x}\}$ with $w_\varphi(x,y_i) = k_i >0$ and we will have $n = k_1 + \dots + k_{m_x}$. Then we define:
\begin{equation*}
f(x)=[y_1^{k_1}, y_2^{k_2}, \dots, y_{m_x}^{k_{m_x}}].
\end{equation*}

Recalling the definitions of the interpretation functions $\Psi_{\mathcal W}$ and $\Psi_{\mathcal S}$, it is easy to check that:
\[ \Psi_{\mathcal W}(\varphi)(x) = \{ y \mid w_\varphi(x,y) \neq 0 \} = \{y_1,\dots y_{m_x}\} \]
and
\[ \Psi_{\mathcal S}(f)(x) = u \circ f(x) = \{y_1,\dots,y_{m_x}\}. \]
Thus we will have $\Psi(\varphi) = \Psi(f)$ as desired.

All that remains to show is that $f:X \to \SP^n(Y)$ is continuous as a symmetric product map, and thus that $f$ is indeed an element of the class $\mathcal S$.
Recall from the definition of $\mathbb N$-weighted map that
\begin{equation}\label{nonzero}
w_{\varphi}(x,y)>0\text{ for all }y\in n_{\varphi}(x) \text{ and }x\in X.
\end{equation}
Fix $x_0\in X$, and by the definition of $i$ we will have:
\begin{equation*}
f(x)=[y_1^{k_1}, y_2^{k_2}, \dots, y_{m_{x_0}}^{k_{m_{x_0}}}],
\end{equation*}
where $n_{\varphi}(x_0)=\{y_1,\dots,y_{m_{x_0}}\}$.

Let $V\subset \SP^n (Y)$ be an open neighborhood of $f(x_0)$, and let $B(x,\delta)$ be the open ball of radius $\delta$ around some point $x\in X$. We must show that there exists $\delta>0$ such that
\begin{equation}\label{conti}
f(x)\in V \text{ for all } x\in B(x_0,\delta).
\end{equation}
 Since $V\ni f(x_0)$ is open in $\SP^n (Y)$ and $Y$ is Hausdorff, by \eqref{metric} there exists $\eta>0$ such that
\[ f(x_0)\in p((B(y_1,\eta)^{k_1} \times \dots \times B(y_{m_{x_0}},\eta)^{k_{m_{x_0}}}) \subseteq V \]
and
\[ D(y_i,\eta)\cap D(y_j,\eta)=\emptyset  \text{ for all }i\neq j, \]
where $p: Y^n\to \SP^n(Y)$ is as in Lemma \ref{q}, and $D(y_i,\eta)$ denotes the closed ball.

Observe that the condition \eqref{weight} implies that there exists $\delta>0$ such that
\begin{equation}\label{local}
\sum_{y\in B(y_i,\eta)}w_{\varphi}(x,y)=\sum_{y\in B(y_i,\eta)}w_{\varphi}(x_0,y)=k_i
\end{equation}
for all $x\in B(x_0,\delta)$ and for all $1\leq i\leq k_{m_{x_0}}$.

Letting $U = \bigcup_{i=1}^{m_{x_0}} B(y_i,\eta)$, the above gives:
\[ \sum_{y\in U} w_\phi(x,y) = \sum_{i=1}^{m_{x_0}} k_i = n \]
for all $x\in B(x_0,\delta)$, and by \eqref{nonzero} this means that $n_\varphi(x)\subseteq U$ for all $x\in B(x_0,\delta)$, and
\[ \# n_\varphi(x) \cap B(y_i,\eta) = k_i \]
for all $x\in B(x_0,\delta)$.

The above means that
\[ f(x)\in p((B(y_1,\eta)^{k_1} \times \dots \times B(y_{m_{x_0}},\eta)^{k_{m_{x_0}}}) \subset V \]
for all $x\in B(x_0,\delta)$, which proves \eqref{conti}.
\end{proof}

\begin{thm}\label{SsubsetW}
$\Psi(\mathcal S) \subseteq \Psi(\mathcal W)$.
\end{thm}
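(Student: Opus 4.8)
The plan is to build the $\mathbb N$-weighted map directly from the symmetric product map by reading off multiplicities. Given $f\in\mathcal S$, that is a continuous map $f:X\to\SP^n(Y)$, I would define a pair $\psi=(n_\psi,w_\psi)$ as follows: writing $f(x)=[y_1^{k_1},\dots,y_{m_x}^{k_{m_x}}]$ with the $y_i$ distinct and each $k_i>0$, set $n_\psi(x)=\{y_1,\dots,y_{m_x}\}=u(f(x))$ and let $w_\psi(x,y)$ be the multiplicity $k_i$ when $y=y_i\in n_\psi(x)$ and $0$ otherwise. With this definition the condition $w_\psi(x,y)>0$ for $y\in n_\psi(x)$ holds by construction, so once $\psi$ is shown to be a weighted map it is automatically an $\mathbb N$-weighted map. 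The interpretation functions then agree on the nose: $\Psi_{\mathcal W}(\psi)(x)=\{y\mid w_\psi(x,y)\neq0\}=\{y_1,\dots,y_{m_x}\}=u\circ f(x)=\Psi_{\mathcal S}(f)(x)$, exactly mirroring the computation in Theorem \ref{WsubsetS}.

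It remains to verify that $\psi$ satisfies Definition \ref{definitionofweightedmap1309}. Condition $(1)$ is quick: $n_\psi=u\circ f$ is the composite of $f$ with the continuous map $u:\SP^n(Y)\to C_n(Y)$, hence continuous into $C_n(Y)$, so by Theorem \ref{mapcorrespondence} the associated multimap $n_\psi$ is upper (and lower) semicontinuous and finite-valued. Condition $(2)(i)$ holds by definition. The substance of the proof is condition $(2)(ii)$, the local constancy of the weight sum, and this is where I expect the real work to be.

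To establish $(2)(ii)$, fix $x_0\in X$ and an open set $U\subseteq Y$ with $n_\psi(x_0)\cap\partial U=\emptyset$. Writing $f(x_0)=[y_1^{k_1},\dots,y_m^{k_m}]$, the boundary condition forces each $y_i$ either into $U$ or into the open set $Y\setminus\overline U$; after relabelling say $y_1,\dots,y_r\in U$ and $y_{r+1},\dots,y_m\in Y\setminus\overline U$, so that $\sum_{y\in U}w_\psi(x_0,y)=k_1+\dots+k_r$. Since the $y_i$ are distinct and $Y$ is a metric space, I can choose $\eta>0$ small enough that the closed balls $D(y_i,\eta)$ are pairwise disjoint, that $D(y_i,\eta)\subseteq U$ for $i\le r$, and that $D(y_i,\eta)\subseteq Y\setminus\overline U$ for $i>r$. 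Now I invoke continuity of $f$ with respect to the metric $d_s$ from \eqref{metric}: there is $\delta>0$ with $d_s(f(z),f(x_0))<\eta$ for all $z\in V:=B(x_0,\delta)$.

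The final step is the bookkeeping that converts $d_s$-closeness into a count, and this is the main obstacle. By the definition of $d_s$ as a minimum over $S_n$, the inequality $d_s(f(z),f(x_0))<\eta$ means there is a labelling of the $n$ coordinates of $f(z)$ matching them to the coordinates of the tuple $(y_1^{k_1},\dots,y_m^{k_m})$ so that each coordinate of $f(z)$ is within $\eta$ of its partner; hence every coordinate of $f(z)$ lies in some $D(y_i,\eta)$, and exactly $k_i$ of them lie in $D(y_i,\eta)$ for each $i$, the disjointness of the balls forcing the match to respect multiplicities. Since $D(y_i,\eta)\subseteq U$ for $i\le r$ while $D(y_i,\eta)\cap U=\emptyset$ for $i>r$, the coordinates of $f(z)$ lying in $U$ are precisely those matched to $y_1,\dots,y_r$, giving $\sum_{y\in U}w_\psi(z,y)=k_1+\dots+k_r=\sum_{y\in U}w_\psi(x_0,y)$ for all $z\in V$. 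This is exactly \eqref{weight}, completing the verification and showing $\Psi(f)=\Psi(\psi)\in\Psi(\mathcal W)$. The one point requiring care is confirming that $d_s$-closeness really does distribute the coordinates of $f(z)$ among the chosen balls with the correct multiplicities; this is the reverse of the continuity argument already carried out in Theorem \ref{WsubsetS}.
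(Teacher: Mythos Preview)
Your proposal is correct and follows exactly the same construction as the paper: define $n_\psi=u\circ f$ and let $w_\psi$ record multiplicities, then verify Definition~\ref{definitionofweightedmap1309} using that a $d_s$-neighborhood of $[y_1^{k_1},\dots,y_m^{k_m}]$ is of the form $p(B(y_1,\eta)^{k_1}\times\cdots\times B(y_m,\eta)^{k_m})$ with the $D(y_i,\eta)$ disjoint. In fact you supply more detail for the local-constancy condition $(2)(ii)$ than the paper does, since the paper simply refers back to the argument of Theorem~\ref{WsubsetS} and records the key neighborhood observation.
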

\begin{proof}
Let $f:X\to \SP^n(Y)$ be an element of $\mathcal S$. We must construct a $\N$-weighted map $\varphi = (n_\varphi,w_\varphi): X\multimap Y$ with $\Psi(f) = \Psi(\varphi)$. The weighted map $\varphi$ is constructed as follows: for some $x\in X$, let
\[ f(x) = [y_1^{k_1}, y_2^{k_2},\dots, y_{m_x}^{k_{m_x}}], \]
and define:
\[ n_\phi(x) = u \circ f (x) = \{ y_1,\dots, y_{m_x}\}, \]
and
\[ w_\varphi(x,y) = \begin{cases} k_i & $ if $ y = y_i\in n_\varphi(x) \\
0 & $ if $ y\not\in n_\varphi(x) \end{cases} \]

It is routine to check that $\Psi(f) = \Psi(\varphi)$. It remains only to show that $\varphi$ as defined above is continuous as an $\mathbb N$-weighted map.

Using the same arguments as in the proof of Theorem \ref{WsubsetS} one can prove that $n_{\varphi}$ is upper semi-continuous and that $w_{\varphi}$ satisfies all the conditions
from Definition \ref{definitionofweightedmap1309}. The crucial observation is the following: let $[y_{1}^{k_{1}},\dots, y_{s}^{k_{s}}]\in \SP^n(Y)$. Then, using \eqref{metric},
for any open neighborhood $V$ of  $[y_{1}^{k_{1}}, \dots, y_{s}^{k_{s}}]$ there exists $\eta>0$ such that
\[ p(B(y_1,\eta)^{k_1}\times \dots\times B(y_s,\eta)^{k_s}) \subseteq V \]
and
\[ D(y_i,\eta) \cap D(y_j,\eta) = \emptyset \text{ for all } i\neq j, \]
where $p$ is an in Lemma \ref{q} and $D(y,r)$ denotes the closed ball.
\end{proof}

Theorems \ref{WsubsetS} and \ref{SsubsetW} demonstrate that $\Psi(W)=\Psi(S)$.

The results obtained so far already demonstrate that $\Psi(U) \subsetneq \Psi(W)$, though the construction of a specific map in $\Psi(W)$ but not in $\Psi(U)$ is not very concrete. According to Theorem \ref{FneqS} there are certain 2-valued maps $S^2\multimap S^5$ which can be expressed as $\N$-weighted maps, but not as 2-fold maps (and thus not as at-most-2-valued unions of equicardinal maps).

The following is a much easier example of an at-most-2-valued map which is an $\N$-weighted map but not a union of equicardinal maps.
\begin{ex}\label{WnotUexample}
Let $X$ be the quotient of the real interval $[-1,2]$ with identifications $-1\sim 0$ and $2\sim 1$. Thus $X$ resembles two circles joined by an interval. Let $\varphi:X\multimap X$ be the map pictured in Figure \ref{WnotUfig}.

\begin{figure}
\[ \begin{tikzpicture}[scale=2]
\draw (-1,0) -- (2,0);
\draw (0,-1) -- (0,2);
\draw[densely dotted] (-1,-1) grid (2,2);
\draw[graph] (-1,-1) -- (0,-.5) -- (.25,0) -- (.75,1) -- (1,1) -- (2,1.5);
\draw[graph] (-1,-.5) -- (0,0) -- (.25,0) -- (.75,1) -- (1,1.5) -- (2,2);
\draw[graph,blue] (.25,0) -- (.75,1);
\fill[blue] (.25,0) circle (.03cm);
\fill[blue] (.75,1) circle (.03cm);
\node[above left] at (-.5,-.25) {1};
\node[below right] at (-.5,-.75) {1};
\node[below right] at (.5,.5) {2};
\node[above left] at (1.5,1.75) {1};
\node[below right] at (1.5,1.25) {1};
\end{tikzpicture}
 \]
\caption{An $\N$-weighted map which is not a union of equicardinal maps. Numbers on arcs of the graph give the weights. The blue arc are points of weight 2, the red arcs are points of weight 1.\label{WnotUfig}}
\end{figure}
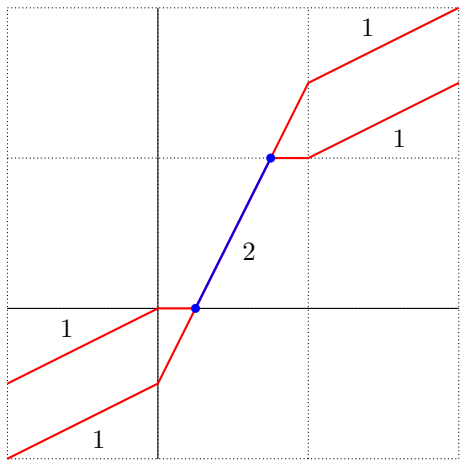

In this case, $\varphi$ is an $\N$-weighted map with weighted index 2. If $\varphi$ were a union of equicardinal maps, it must be globally expressible as either equicardinal (2-valued) itself, or as a union of two single-valued maps. Inspection of the graph shows that neither case is possible, since on the domain $[0,1]$ the map is a union of two single-valued maps, but on the domains $[-1,0]$ and $[1,2]$ it is not a union of two single-valued maps.
\end{ex}

The following Theorem and its proof will appear in Section \ref{weightedequcardinalsection}, demonstrating a large class of space for which every weighted map is homotopic to a union of single-valued maps.
\begin{thm}\label{weighted-equicardinal}
If $Y$ is $n$-connected and $\dim X\leq 2n$, then any $\N$-weighted map $\varphi: X\to Y$ is $w$-homotopic to one of the form $\sum_{i=1}^k \lambda_i f_i$, where $\lambda_i\in \Q\setminus\{0\}$ and $f_i: X\to Y$ is continuous, for all $1\leq i\leq k$.
\end{thm}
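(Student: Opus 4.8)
The plan is to translate the statement entirely into (rational) homotopy theory by way of a Dold--Thom correspondence, and then to exploit the hypotheses $Y$ is $n$-connected and $\dim X\le 2n$ through the rational Hurewicz theorem, which is precisely the metastable range appearing here.

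First I would set up the classifying space for the weighted maps in question. Extending the class $\mathcal{W}$ to allow weights in $\Q$, a Dold--Thom type classification (in the spirit of Darbo's homology of weighted maps) identifies $w$-homotopy classes of $\Q$-weighted maps $X\multimap Y$ with homotopy classes $[X, A_{\Q}(Y)]$, where $A_{\Q}(Y)$ is the free topological $\Q$-vector space generated by $Y$ (reduced at the basepoint): a $\Q$-weighted map $x\mapsto \sum_y w(x,y)\,\delta_y$ corresponds to the single-valued map $x\mapsto \sum_y w(x,y)\,\eta(y)$, where $\eta:Y\to A_{\Q}(Y)$ is the canonical inclusion. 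By Dold--Thom, $\pi_i(A_{\Q}(Y))\cong \tilde H_i(Y;\Q)$, and since $A_{\Q}(Y)$ is a topological $\Q$-vector space it is a generalized Eilenberg--MacLane space, $A_{\Q}(Y)\simeq \prod_i K(\tilde H_i(Y;\Q),i)$, so that $[X,A_{\Q}(Y)]$ is itself a $\Q$-vector space. Under this dictionary a split map $\sum_{i=1}^k\lambda_i f_i$ corresponds exactly to $\sum_i \lambda_i\,\eta_*[f_i]$, i.e.\ to an element of the $\Q$-subspace $V:=\Q\text{-span}\{\eta_*[f]\mid f\in[X,Y]\}$. Thus the theorem becomes the single assertion $V=[X,A_{\Q}(Y)]$.

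Next I would bring in the hypotheses. Since $Y$ is $n$-connected, $\tilde H_i(Y;\Q)=0$ for $i\le n$, so only factors with $i>n$ survive; and since $\dim X\le 2n$, only factors with $i\le 2n$ contribute to $[X,A_{\Q}(Y)]$. In the surviving range $n<i\le 2n$ the rational Hurewicz theorem asserts that the Hurewicz map $\pi_i(Y)\otimes\Q\to\tilde H_i(Y;\Q)$ is an isomorphism (and an epimorphism for $i=2n+1$). As the composite $Y\xrightarrow{\eta}A_{\Q}(Y)$ realizes the Hurewicz map on homotopy groups, the rationalization $\eta_{\Q}:Y_{\Q}\to A_{\Q}(Y)$ is a $(2n+1)$-equivalence, so its homotopy fiber is $2n$-connected. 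Because $\dim X\le 2n$, obstruction theory then gives a bijection $(\eta_{\Q})_*:[X,Y_{\Q}]\xrightarrow{\cong}[X,A_{\Q}(Y)]$. In particular the given weighted map, viewed as $\bar\varphi:X\to A_{\Q}(Y)$, factors up to homotopy as $\eta_{\Q}\circ\psi$ for some $\psi:X\to Y_{\Q}$, and it remains to show $(\eta_{\Q})_*[\psi]\in V$.

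The crux is therefore to show that genuine single-valued maps $\Q$-span $[X,A_{\Q}(Y)]$. I would finish by a localization/obstruction argument: assuming as usual that $X$ is a finite complex and using that $Y$ is $n$-connected (hence nilpotent), one shows that each class $[\psi]\in[X,Y_{\Q}]$ becomes, after clearing a common denominator in the vector space $[X,A_{\Q}(Y)]$, a genuine map, i.e.\ there is an integer $N>0$ and a map $g:X\to Y$ with $\eta_*[g]=N\cdot(\eta_{\Q})_*[\psi]$. Concretely $g$ is built cell by cell over the skeleta of $X$ to induce the prescribed rational homology homomorphism $\psi_*:H_i(X;\Q)\to H_i(Y;\Q)$ in each degree $n<i\le 2n$, the successive extension obstructions (living in $H^{i+1}(X;\pi_i(Y))$) being annihilated after multiplying by a suitable $N$ since, in this range, they are rationally trivial. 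Then $(\eta_{\Q})_*[\psi]=\tfrac1N\,\eta_*[g]\in V$, and surjectivity of $(\eta_{\Q})_*$ yields $V=[X,A_{\Q}(Y)]$, which is the theorem.

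I expect this last realization step to be the main obstacle: turning a rational homology homomorphism into an honest map $g:X\to Y$ up to a denominator. Everything preceding it (the Dold--Thom dictionary and the $(2n+1)$-equivalence) is formal, and the connectivity and dimension hypotheses are exactly what confine the problem to the metastable range, where rational Hurewicz converts the homotopical data $\pi_i(Y)\otimes\Q$ into the homological data $H_i(Y;\Q)$ that a weighted map records, and where the extension obstructions become rationally trivial. To make the divisibility bookkeeping rigorous rather than grinding through the obstruction groups by hand, I would invoke the standard localization theory of Hilton--Mislin--Roitberg (or Sullivan) for nilpotent spaces.
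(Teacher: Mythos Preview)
Your outline is essentially correct and leads to the same conclusion, but it is a genuinely different route from the paper's.

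The paper never invokes Dold--Thom, rational Hurewicz, or localization of nilpotent spaces. Instead it works entirely inside Pejsachowicz's framework of weighted homotopy: it passes to the stable category via the \emph{weighted} suspension isomorphism $\pi^w[X;Y]\cong\{X,Y\}^w$ (valid with no connectivity hypothesis), uses the ordinary Freudenthal theorem in the metastable range to get $\pi[X;Y]\cong\{X,Y\}$, and then cites Pejsachowicz's stable comparison theorem that $J_\infty\otimes\id_{\Q}:\{X,Y\}\otimes\Q\to\{X,Y\}^w$ is an isomorphism. A diagram chase yields $J_0\otimes\id_{\Q}:\pi[X;Y]\otimes\Q\xrightarrow{\cong}\pi^w[X;Y]$, and the splitting follows formally after subtracting $I_w(\varphi)y_0$ to reduce to index zero. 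Thus the paper's proof is a black-box application of two cited results from Pejsachowicz plus Freudenthal.

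Your approach trades those black boxes for others: the identification $\pi^w[X;Y]\cong[X,A_{\Q}(Y)]$, rational Hurewicz for the $(2n{+}1)$-equivalence $Y_{\Q}\to A_{\Q}(Y)$, and the Hilton--Mislin--Roitberg localization theorem $[X,Y_{\Q}]\cong[X,Y]\otimes\Q$. This is arguably more transparent, since it explains \emph{why} the metastable range enters (rational Hurewicz rather than Freudenthal) and makes the Dold--Thom content of Pejsachowicz's stable isomorphism explicit. Two points deserve care, however. First, the classifying-space identification $\pi^w[X;Y]\cong[X,A_{\Q}(Y)]$ is not proved in the paper and is itself roughly the content of the Pejsachowicz results the paper cites; you are not avoiding that work, only repackaging it. Second, to conclude that $(\eta_{\Q})_*$ carries the $\Q$-span of $r_*[X,Y]$ onto all of $[X,A_{\Q}(Y)]$ you need $(\eta_{\Q})_*$ to be $\Q$-linear for the two a priori different abelian-group structures (the suspension structure on $[X,Y_{\Q}]$ versus the additive $H$-space structure on $A_{\Q}(Y)$); this holds by an Eckmann--Hilton argument in the stable range, but should be said. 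With those two points supplied, your argument is complete and gives an alternative, more homotopy-theoretic proof.
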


%
%

\section{$\Psi(\mathcal W)\subsetneq \mathcal C$}
From our basic definitions it is clear that $\Psi(\mathcal W)\subseteq \mathcal C$. In this section we give an example map showing that these two classes are not equal. That is, the following example is an at-most-$n$-valued map which cannot be expressed as an $\N$-weighted map.

\begin{ex}\label{forkmap}
Viewing the circle $S^1$ as the interval $[0,1]$ with endpoints identified, let $f:S^1 \multimap S^1$ be the at-most-3-valued map given in Figure \ref{forkfig}. We will argue that this $f$ cannot be expressed as any $\N$-weighted map.

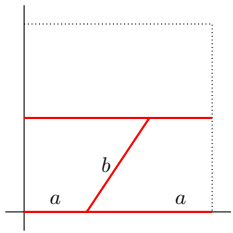
\begin{figure}
\[
\begin{tikzpicture}[scale=2.5]
\draw (-.1,0) -- (1.1,0);
\draw (0,-.1) -- (0,1.1);
\draw[densely dotted] (0,1) -- (1,1) -- (1,0);

\draw[red,thick] (0,0) -- (1,0);
\draw[red,thick] (0,.5) -- (1,.5);
\draw[red,thick] (.333,0) -- (.666,.5);

\tikzstyle{weightnode}=[scale=.8]
\node[weightnode,above] at (.166,0) {$a$};
\node[weightnode,left] at (.5,.25) {$b$};

\node[weightnode,above] at (.833,0) {$a$};
\end{tikzpicture}
\]
\caption{An at-most-3-valued map which is not an $\N$-weighted map. The $x$-coordinates of the two branching points are $\frac13$ and $\frac23$. Arcs of the graph are labeled according to the argument given in Example \ref{forkmap}.\label{forkfig}}
\end{figure}

To obtain a contradiction, assume that $f$ is an $\N$-weighted map. By continuity of the weighted map, the weights at the values along the bottom arc of the graph must be constant. That is, there must be some $a>0$ such that $w_f(x,0) = a$ for all $x\neq \frac13$. Similarly, there must be some $b>0$ such that $w_f(x,y)=b$ when $(x,y)$ is in the diagonal arc of the graph. (See labels in Figure \ref{forkfig}.)

But continuity of the weighted map $f$ requires that the weights sum at the point $(\frac13, 0)$, and so we must have $a=b+a$, which is impossible since $a$ and $b$ are greater than 0.
\end{ex}

The example above shows that the class of $\N$-weighted maps (and thus also symmetric product maps) is strictly smaller than the class of all general at-most-$n$-valued maps. (Recall by Proposition \ref{samen=2} that these classes are the same in the case of $n=2$.)

\section{The class of $\{1,n\}$-valued maps}
In \cite{Oneill}, O'Neill developed a fixed point theory for the following class of multivalued selfmaps: given a space $X$ and a natural number $n$, O'Neill considers maps $f:X\multimap X$ for which $f(x)$ is a set consisting of 1 or $n$ acyclic components. Such maps are shown to have a well-defined induced homomorphism in homology, and satisfy a Lefschetz fixed point theorem.

Following some discussion in \cite{schirmer}, we make the following definition: Given a natural number $n$, we say an at-most-$n$-valued map $f:X\multimap Y$ is a \emph{$\{1,n\}$-valued map} if the cardinality of $f(x)$ is 1 or $n$ for each $x$. (Schirmer's paper \cite{schirmer} concerns $\{1,2\}$-valued maps, which she calls ``bimaps.'')

Given spaces $X$ and $Y$, let $\mathcal D_n(X,Y)$ be the set of $\{1,n\}$-valued maps from $X$ to $Y$. In this Section we  briefly discuss how this set fits into the containments described in \eqref{containments}. The results are that ${\mathcal D}_n(X,Y) \subset \Psi_{\mathcal S}(\mathcal S) = \Psi_{\mathcal W}(\mathcal W)$, but ${\mathcal D}_n(X,Y)$ is not comparable to $\Psi(\mathcal U)$ or $\Psi(\mathcal F)$.

\begin{thm}
${\mathcal D}_n(X,Y) \subset \Psi_{\mathcal S}(\mathcal S)$.
\end{thm}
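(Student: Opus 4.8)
The plan is to show that any $\{1,n\}$-valued map $f:X\multimap Y$ arises as a symmetric product map, by exhibiting a single-valued continuous map $g:X\to \SP^n(Y)$ with $u\circ g = f$. The natural candidate for $g$ is the obvious one: wherever $f(x)$ has cardinality $n$, say $f(x) = \{y_1,\dots,y_n\}$, set $g(x) = [y_1,\dots,y_n]$ with all multiplicities equal to $1$; and wherever $f(x)$ has cardinality $1$, say $f(x) = \{y\}$, set $g(x) = [y^n]$, the class with $y$ repeated $n$ times. By construction $u\circ g(x) = f(x)$ in both cases, so once continuity of $g$ is established we will have $\Psi_{\mathcal S}(g) = f$, giving $f\in \Psi_{\mathcal S}(\mathcal S)$.

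The key steps, in order, are as follows. First I would define $g$ precisely as above and verify the set-theoretic identity $u\circ g = f$, which is immediate. Second, and this is the heart of the argument, I would prove that $g$ is continuous. The cleanest route is to use Theorem \ref{mapcorrespondence}: since $f$ is given as an at-most-$n$-valued map it corresponds to a continuous single-valued map $X\to C_n(Y)$, so I may work with the Hausdorff metric on $C_n(Y)$ and on $\SP^n(Y)$ via the metric $d_s$ from equation \eqref{metric}. I would then show that $g$ is continuous at an arbitrary point $x_0$ by splitting into the two cases according to $\#f(x_0)$. If $\#f(x_0) = n$, then near $x_0$ the cardinality stays $n$ (points cannot collide suddenly, by the configuration-space continuity of $f$), so locally $g$ agrees with the composite $\iota\circ f$ where $\iota:D_n(Y)\to \SP^n(Y)$, which is continuous; hence $g$ is continuous there. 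If $\#f(x_0) = 1$, say $f(x_0) = \{y\}$, then $g(x_0) = [y^n]$, and I must show that for nearby $x$ the class $g(x)$ is close to $[y^n]$ in $d_s$: whether $f(x)$ is a single close point or a cluster of $n$ close points, all of its elements lie in a small ball about $y$, so every coordinate representative of $g(x)$ is within $\eta$ of $y$, forcing $d_s(g(x),[y^n]) < \eta$. This last estimate is exactly the kind of neighborhood-in-$\SP^n$ computation appearing in the proofs of Theorems \ref{WsubsetS} and \ref{SsubsetW}, so I would cite those arguments rather than redo them.

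The main obstacle is the continuity of $g$ precisely at the collision points where $\#f(x_0) = 1$ but nearby fibers may have cardinality $n$. The subtlety is that the assignment of multiplicities is discontinuous as a choice of tuple in $Y^n$, yet must descend to a continuous map into the quotient $\SP^n(Y)$; the resolution is that in $\SP^n(Y)$ the class $[y^n]$ is genuinely a limit of classes $[y_1,\dots,y_n]$ as the $y_i$ coalesce to $y$, because the symmetric-product metric $d_s$ only measures the unordered multiset and collapses the distinction. Making this rigorous amounts to showing that as $f(x)\to \{y\}$ in $C_n(Y)$, regardless of how cardinality fluctuates between $1$ and $n$, the image $g(x)\to [y^n]$; the fact that $f$ is $\{1,n\}$-valued (so there are no intermediate cardinalities to track) is what keeps this manageable.
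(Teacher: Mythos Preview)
Your proposal is correct and follows essentially the same approach as the paper: both construct the lift $g:X\to \SP^n(Y)$ by sending $\{y\}\mapsto [y^n]$ and $\{y_1,\dots,y_n\}\mapsto [y_1,\dots,y_n]$, and then conclude $\Psi_{\mathcal S}(g)=f$. The paper packages this by restricting to the subspace $A\subset C_n(Y)$ of sets of cardinality $1$ or $n$ and asserting that the map $i:A\to \SP^n(Y)$ is continuous, whereas you verify continuity of $g$ directly by the two-case analysis; your treatment of the collision case $\#f(x_0)=1$ supplies exactly the argument the paper leaves implicit.
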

\begin{proof}
Let $f\in {\mathcal D}_n(X,Y)$. Since $f:X\multimap Y$, we can view $f$ as a single-valued map $f:X\to C_n(Y)$. Let $A\subset C_n(Y)$ be the collection of all sets consisting of either $1$ or $n$ elements, and there is a continuous bijection $i:A \to \SP^n(Y)$ given by $i(\{y\}) = [y,\dots,y]$ and $i(\{y_1,\dots,y_n\}) = [y_1,\dots,y_n]$. Then $g = i \circ f: X \to \SP^n(Y)$ is a symmetric product map, and we have $\Psi(g) = f$ and so $f \in \Psi(\mathcal S)$ as desired.
\end{proof}

Thus every $\{1,n\}$-valued map can be expressed naturally as a symmetric product map, though not all symmetric product maps are $\{1,n\}$-valued when $n>2$. For example the map of Figure \ref{123valuedfig} is an at-most-3-valued map on $[0,1]$ for which the cardinality of $f(x)$ is one of $\{1,2,3\}$. This map is a union of equicardinals, and so corresponds to a $3$-fold map and a weighted map and a symmetric product map, but it is not a $\{1,3\}$-valued map.
\begin{figure}
\[
\begin{tikzpicture}[scale=2.5]
\draw (-.1,0) -- (1.1,0);
\draw (0,-.1) -- (0,1.1);
\draw[densely dotted] (0,1) -- (1,1) -- (1,0);

\draw[red,thick] (1,1) -- (0,0) -- (1,0);
\draw[red,thick] (.5,.5) -- (1,.5);

\end{tikzpicture}
\]

\caption{A union of equicardinal maps on the interval $[0,1]$ which is not a $\{1,n\}$-valued map.\label{123valuedfig}}
\end{figure}
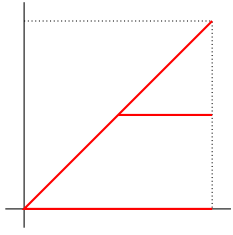

\section{A splitting result for $\Q$-weighted maps}\label{weightedequcardinalsection}

In this section we consider a more general class of weighted maps with weights in the rational numbers $\mathbb Q$. Our main result for the section proves in a large class of cases that a $\mathbb Q$-weighted map will split as a weighted sum of single-valued maps.

Let \( R \) be a commutative ring with a unit $1$. We denote by \( R(X) \) the free \( R \)-module generated by the set \( X \). Identifying \( X \) with the canonical basis of \( R(X) \), any element \( \zeta \in R(X) \) can be expressed in the form
\begin{equation}\label{xi}
\zeta = \sum_{x\in X} \zeta_x \cdot x,
\end{equation}
where \( \zeta_x \) is called the coefficient of \( \zeta \) corresponding to the point \( x \in X \). Sometimes the symbol $\zeta_x$ will be denoted as follows
\begin{equation}\label{xi-2}
\zeta_x=\langle \zeta, x\rangle.
\end{equation}
Given $\zeta\in R(X)$, the support of \( \zeta \) is defined as
\begin{equation*}
\text{supp}(\zeta):=\{x\in X \mid \langle \zeta, x \rangle \neq 0 \}.
\end{equation*}
For a subset \( A \subseteq X \) and $\zeta\in R(X)$, we denote by \( m_A(\zeta) \) the sum of the coefficients of \( \zeta \) at the points in \( A \), i.e.,
\begin{equation*}
m_A(\zeta):=\sum_{x\in A}\langle \zeta,x\rangle\in R.
\end{equation*}

\begin{defn}\label{weighted map}
Let \( X \) and \( Y \) be topological Hausdorff spaces. A weighted map
\[
\phi \colon X \to Y
\]
with coefficients in \( R \) is a homomorphism \( \phi \colon R(X) \to R(Y) \) such that there exists a multivalued upper semi-continuous map
\[
t_{\phi} \colon X \multimap Y
\]
satisfying the following conditions:
\begin{enumerate}
\item[(1)] For every \( x \in X \), \( t_{\phi}(x) \) is a finite subset of \( Y \).
\item[(2)] The support of \( \phi(x)\in R(Y) \) is contained within \( t_{\phi}(x) \), i.e.,
\begin{equation*}
\text{supp}(\phi(x)) := \{y \in Y \mid \langle \phi(x),y\rangle \neq 0\} \subset t_{\phi}(x).
\end{equation*}
\item[(3)] If \( U \) is an open subset of \( Y \) and \( x \in X \) is such that \( t_{\phi}(x) \cap \partial U = \emptyset \), then
\[
m_U(\phi(x)) = m_U(\phi(\tilde{x})),
\]
for any \( \tilde{x} \in X \) sufficiently close to \( x \).
\end{enumerate}
\end{defn}

\begin{rem}\label{two-definitions-of-weighted}
Definition \ref{definitionofweightedmap1309} was originally stated for the set of natural numbers \( \mathbb{N} \). However, it is evident that the definition also holds for any ring \( R \).
Thus, if we replace \( \mathbb{N} \) with an arbitrary ring \( R \), Definition \ref{definitionofweightedmap1309} becomes equivalent to Definition
\ref{weighted map}, as we demonstrate below.

By Definition \ref{weighted map}, a weighted mapping is a pair
\[
(t_{\phi}: X \multimap Y, \phi: R(X) \to R(Y)),
\]
which, for simplicity, we denote as \(\phi: X \to Y\). Observe that a pair
\[
(t_{\phi}:X\multimap Y,\phi:R(X)\to R(Y))
\]
induces the following pair
\[
(n_{\phi}:X\multimap Y,w_{\phi}:X\times Y\to R)
\]
in the following way:
\begin{align*}
&n_{\phi}(x):=t_{\phi}(x)\text{ for all }x\in X,\\
&w_{\phi}(x,y):=\xi_y^x\text{ for all }x\in X\text{ and }y\in Y,
\end{align*}
where $\phi: R(X)\to R(Y)$ is given by
\begin{align*}
\phi(x)=\sum_{y\in Y}\xi_y^x y \in R(Y)\quad \text{ (comp. \eqref{xi})}.
\end{align*}
Conversely, a pair
\[
\phi=(n_{\phi}:X\multimap Y,w_{\phi}:X\times Y\to R)
\]
induces a pair
\[
(t_{\phi}:X\multimap Y,\phi: R(X)\to R(Y))
\]
as follows
\begin{align*}
&t_{\phi}(x):=n_{\phi}(x)\text{ for all }x\in X,\\
&\phi(x):=\sum_{y\in Y}w(x,y)y\text{ for all }x\in X\text{ and }y\in Y.
\end{align*}
In summary, since the set of natural numbers \( \mathbb{N} \) is a subset of the rationals \( \mathbb{Q} \), weighted maps with coefficients in \( \mathbb{N} \) can be analyzed using homotopy invariants associated with the set of rational numbers.
\end{rem}
The multivalued map \( t_{\phi} \) in Definition \ref{weighted map} is called an upper semi-continuous support of \( \phi \) (usc-support). Among all upper semi-continuous supports of \( \phi \), there exists a minimal one, denoted by \( t_0 \), defined as:
\[
t_{0}(x) = \bigcap_{t_{\phi} \in \text{supp}(\phi)} t_{\phi}(x),
\]
where \( \text{supp}(\phi) \) denotes the set of all usc-supports of \( \phi \).
It is straightforward to verify that the graph of \( t_{0} \) is the closure of the set
\[
\{ (x, y) \mid \langle \phi(x), y \rangle \neq 0 \}.
\]

From property $(3)$ of Definition \ref{weighted map}, it follows that for any weighted map \( \phi \colon X \to Y \), the function \( x \mapsto m_Y(\phi(x)) \) is locally constant. Consequently, if \( X \) is connected, it is constant on  \( X \), and  the value
\[
I_w(\phi) := m_Y(\phi(x))
\]
is called the index of \( \phi \).


\begin{rem}\label{operation-weighte-maps}
The following operations can be performed on weighted maps:
\begin{itemize}

\item  Any continuous map \( f \colon X \to Y \) can be regarded as a weighted map in the following way. Specifically,
\( f: R(X) \to R(Y) \) is defined by
\[
f(\zeta) = f\left(\sum_{x\in X} \langle\zeta,x \rangle x \right) = \sum_{x\in X} \langle \zeta,x\rangle  f(x), \quad \text{for all } \zeta \in R(X).
\]
Additionally, \( t_f \colon X \to Y \) is defined by \( t_f(x) = f(x) \) for all \( x \in X \).

\item Weighted maps can be added and scaled by elements of \( R \). Given two weighted maps \( \phi \colon X \to Y \) and \( \psi \colon X \to Y \), their sum \( \phi + \psi \colon X \to Y \) is also a weighted map. Indeed, the upper semi-continuous (usc) map \( t \) defined by \( t(x) = t_\phi(x) \cup t_\psi(x) \) is a usc-support for the homomorphism \( \phi + \psi \colon R(X) \to R(Y) \). In particular, \( t_{\phi + \psi} \subseteq t_\phi \cup t_\psi \). Furthermore, if \( \phi \) is a weighted map and \( r \in R \), then \( r\phi \) is also a weighted map, where $(r\phi)(x)=r\phi(x)$. Hence, any linear combination of a finite number of continuous maps with coefficients in \( R \) is a weighted map.

\item The composition of two weighted maps is a weighted map. Specifically, if \( t \) is a support of \( \phi \colon X \to Y \), and \( t' \) is a support of \( \psi \colon Y \to Z \), then the upper semi-continuous map \( t't \) is a support of the homomorphism \( \psi\phi \colon R(X) \to R(Z) \). The set of all weighted maps from \( X \) to \( Y \) naturally forms an \( R \)-module, and this structure is preserved under composition.

\item The product of two weighted maps \( \phi \colon X \to Y \) and \( \phi' \colon X' \to Y' \) is the weighted map \( \phi \times \phi' \colon X \times X' \to Y \times Y' \), defined by
\[
(x, x') \mapsto \sum_{y \in t_\phi(x), \; y' \in t_{\phi'}(x')} \langle \phi(x), y \rangle \langle \phi'(x'), y' \rangle \, (y, y').
\]

\end{itemize}
\end{rem}
Let \( \mathcal{CW} \) denote the category of CW-complexes and continuous maps. Given \( X \in \mathcal{CW} \) and \( x_0 \in X \), we will refer to the pair \( (X, x_0) \) as a {\bf pointed space} and \( x_0 \) as the {\bf base point} of \( X \). Similarly, a continuous map \( f \colon X \to Y \) such that \( f(x_0) = y_0 \) will be called a {\bf pointed map} and will be denoted by \( f \colon (X, x_0) \to (Y, y_0) \). Note that the base point can be chosen arbitrarily. The category of pointed CW-complexes and pointed maps between them will be denoted by \( \mathcal{CW}_0 \).
A pointed homotopy \( h \colon (X, x_0) \times [0,1] \to (Y, y_0) \) is a map such that for each \( t \in [0,1] \), the partial map \( h_t \colon (X, x_0) \to (Y, y_0) \), defined by \( h_t(x) = h(x, t) \), is a pointed map.

We denote the morphisms from \( X \) to \( Y \) in the category \( \mathcal{CW} \) by \( [X; Y] \). Similarly, we denote by \( [(X,x_0);(Y,y_0)] \) (respectively, \( \pi[(X,x_0); (Y,y_0)] \)) the set of all pointed morphisms from \( (X, x_0) \) to \( (Y, y_0) \) in the category \( \mathcal{CW}_0 \) (respectively, the set of all pointed homotopy classes of morphisms from \( (X, x_0) \) to \( (Y, y_0) \) in the homotopy category \( \pi \mathcal{CW}_0 \)).

The weighted category over the ring \( R \) is a category that has the same objects as \( \mathcal{CW} \), but its morphisms are weighted maps {\bf of index $0$} with coefficients in \( R \). This category is denoted by \( w\text{-}\mathcal{CW} \). We denote the morphisms from \( X \) to \( Y \) in \( w\text{-}\mathcal{CW}\) by \( [X; Y]^w \).

To define the category \( w\text{-}\mathcal{CW}_0 \), we first introduce the following concept: a \( w \)-map \( \psi \colon X \to Y \) of index zero  over a ring \( R \) is called {\bf pointed} if its minimal support, denoted \( t_{\psi} \), satisfies \( t_{\psi}(x_0) = y_0 \), where \( x_0 \in X \) and \( y_0 \in Y \) are the base points. Such a weighted map will be denoted by \( \psi \colon (X, x_0) \to (Y, y_0) \). The category \( w\text{-}\mathcal{CW}_0 \) consists of the same objects as \( \mathcal{CW}_0 \), but its morphisms are pointed weighted maps. The morphisms from \( (X,x_0) \) to \((Y,y_0) \) in \( w\text{-}\mathcal{CW}_0 \) will be denoted by \( [(X,x_0); (Y,y_0)]^w\).

Given two pointed weighted maps \( \phi \) and \( \psi \) from \( (X, x_0) \) to \( (Y, y_0) \) of index zero, we say that \( \phi \) is \( w \)-homotopic to \( \psi \) if there exists a weighted map \( \theta \colon (X, x_0) \times [0,1] \to (Y, y_0) \) such that \( \theta_t = \theta(\cdot, t) \) is a pointed weighted map for each \( t \in [0, 1] \), with the following properties:
\[
\theta(x, 0) = \phi(x) \quad \text{and} \quad \theta(x, 1) = \psi(x).
\]

Thus, the above concept allows us to introduce the category \( \pi w\text{-}\mathcal{CW}_0 \), whose objects are the same as those in \( w\text{-}\mathcal{CW}_0 \), and whose morphisms are the pointed \( w \)-homotopy classes of all morphisms from the category \( w\text{-}\mathcal{CW}_0 \).  The set of all morphisms from $(X,x_0)$ to $(Y,y_0)$ in this category will be denoted by
\[
\pi^w[(X,x_0); (Y,y_0)].
\]

By forgetting the base point in the category \( \pi w\text{-}\mathcal{CW}_0 \), we obtain the category denoted by \( \pi w\text{-}\mathcal{CW} \).

\begin{lem}
The set $\pi^w[(X,x_0); (Y,y_0)]$ has the structure of a (left) $R$-module, where
\begin{itemize}
\item $+:\pi^w[(S^n,s_0); (X,x_0)]\times \pi^w[(S^n,s_0); (X,x_0)]\to\pi^w [(S^n,s_0); (X,x_0)]$,
\item $\cdot : R\times \pi^w[(S^n,s_0); (X,x_0)]\to\pi^w [(S^n,s_0); (X,x_0)]$
\end{itemize}
 are defined as in Remark \ref{operation-weighte-maps}.
\end{lem}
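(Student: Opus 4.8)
The plan is to transport the $R$-module structure that weighted maps already carry (Remark \ref{operation-weighte-maps}) down to $w$-homotopy classes, exploiting the crucial feature that both operations are defined in the \emph{target} module $R(Y)$ rather than through any co-$H$-space structure on the domain; this is why the statement can be made for an arbitrary pointed domain $(X,x_0)$ and not only for spheres. First I would pin down the operations at the level of maps: for pointed weighted maps $\phi,\psi\colon (X,x_0)\to (Y,y_0)$ of index $0$ and for $r\in R$, set $(\phi+\psi)(x)=\phi(x)+\psi(x)$ and $(r\phi)(x)=r\phi(x)$ inside $R(Y)$, exactly as in Remark \ref{operation-weighte-maps}.

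Next I would check that these operations stay inside the morphisms of $\pi w\text{-}\mathcal{CW}_0$, that is, that $\phi+\psi$ and $r\phi$ are again \emph{pointed} weighted maps of \emph{index $0$}. Remark \ref{operation-weighte-maps} already guarantees they are weighted maps, with $t_{\phi+\psi}\subseteq t_\phi\cup t_\psi$ and $t_{r\phi}\subseteq t_\phi$. Index $0$ is preserved because $I_w=m_Y\circ(\text{eval})$ is $R$-linear: $I_w(\phi+\psi)=m_Y(\phi(x))+m_Y(\psi(x))=0$ and $I_w(r\phi)=r\,m_Y(\phi(x))=0$. Pointedness follows from the same support bounds: over the base point we get $t_{\phi+\psi}(x_0)\subseteq t_\phi(x_0)\cup t_\psi(x_0)=\{y_0\}$ and $t_{r\phi}(x_0)\subseteq t_\phi(x_0)=\{y_0\}$, so the minimal support still sits over $y_0$. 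The additive identity is the zero morphism $0\colon R(X)\to R(Y)$ and the inverse of $\phi$ is $-\phi=(-1)\phi$, both legitimate pointed index-$0$ weighted maps by the same reasoning.

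Then I would verify that both operations descend to $w$-homotopy classes, which is where the real content lies. Given $w$-homotopies $\theta$ from $\phi$ to $\phi'$ and $\eta$ from $\psi$ to $\psi'$, the pointwise sum $\theta+\eta\colon (X,x_0)\times[0,1]\to (Y,y_0)$ is again a weighted map by Remark \ref{operation-weighte-maps}; each slice $(\theta+\eta)_t=\theta_t+\eta_t$ is pointed of index $0$ by the previous paragraph, and its endpoints are $\phi+\psi$ and $\phi'+\psi'$, so $\phi+\psi\simeq_w \phi'+\psi'$. The identical argument applied to $r\theta$ shows $r\phi\simeq_w r\phi'$. Hence $+$ and $\cdot$ are well defined on $\pi^w[(X,x_0);(Y,y_0)]$. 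The main obstacle is exactly this bookkeeping: one must confirm the support, index, and pointedness conditions for a weighted map on $X\times[0,1]$ and for each of its time-slices, not just for a single map.

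Finally, the module axioms require no further homotopy-theoretic input. Associativity and commutativity of $+$, the identity property of the zero morphism, the inverse property of $-\phi$, the two distributive laws, $r(s\phi)=(rs)\phi$, and $1\cdot\phi=\phi$ all hold \emph{on the nose} for the underlying homomorphisms $R(X)\to R(Y)$, since $\mathrm{Hom}_R(R(X),R(Y))$ is a left $R$-module and our operations are merely its restriction to (pointed, index-$0$) weighted maps. Because every such identity holds before quotienting, it persists after passing to $w$-homotopy classes, and therefore $\pi^w[(X,x_0);(Y,y_0)]$ inherits the structure of a left $R$-module. Once the support/index/pointedness conditions of the preceding step are in hand, these algebraic verifications are automatic.
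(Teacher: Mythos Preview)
The paper states this lemma without proof, treating it as a routine consequence of the operations in Remark \ref{operation-weighte-maps}. Your proposal correctly supplies the omitted verification: closure of pointed index-$0$ weighted maps under $+$ and scalar multiplication (via the support bound $t_{\phi+\psi}\subseteq t_\phi\cup t_\psi$ and linearity of $m_Y$), descent to $w$-homotopy classes by summing or scaling the homotopies pointwise, and inheritance of the module axioms from $\mathrm{Hom}_R(R(X),R(Y))$.
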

\begin{rem}
From now on, we assume that \( R = \mathbb{Q} \). All spaces considered are assumed to be connected CW-complexes.
\end{rem}
We will now describe the $w$-homotopy theory for weighted maps.

\begin{defn}
The $n$-th \(w\)-homotopy module \(\pi_n^w(X,x_0)\) of a pointed space \((X,x_0)\) is defined as the module \((\pi^w[(S^n,s_0); (X,x_0)],+,\cdot)\).
\end{defn}

Given two pointed compact spaces \((X, x_0)\) and \((Y, y_0)\), we define the smash product as follows:
\[
X \wedge Y = (X \times Y) / \big(X \times \{y_0\} \cup \{x_0\} \times Y\big),
\]
where the sets \(X \times \{y_0\}\) and \(\{x_0\} \times Y\) are collapsed to a single equivalence class. Thus, the basepoint of the pointed space
\(X \wedge Y\) is the equivalence class of \((x_0, y_0)\), denoted by \(x_0 \wedge y_0\). Each element of the space $X\wedge Y$ is denoted by the symbol $x\wedge y$. The smash product for compact spaces has the following properties, where $\cong$ denotes homeomorphism:
\begin{enumerate}
\item[(a)] $X\wedge Y \cong Y\wedge X$.
\item[(b)] $(X\wedge Y)\wedge Z\cong X\wedge (Y\wedge Z)$.
\item[(c)] $S^n\wedge S^m\cong S^{n+m}$ and $S^0\wedge X\cong X$.
\item[(d)] $S^n\wedge (S^m\wedge X)\cong (S^n\wedge S^m)\wedge X\cong S^{n+m}\wedge X$.
\item[(e)] $S^1\wedge X\cong \Sigma X$, where $\Sigma X:=X\times [0,1]/(X\times\{0\}\cup X\times \{1\}\cup \{x_0\}\times [0,1])$
in the literature is called the reduced suspension of  $X$.
\item[(f)] The smash product $X\wedge Y$ of two CW-complexes $(X,x_0)$ and $(Y,y_0)$ is also a CW-complex.
\end{enumerate}

\begin{lem}(\cite[Corollaries 5.15 and 5.22]{whitehead} or \cite{Pejs1})
If \( X \) and \( Y \) are CW-complexes, then the following set
\[
\pi\big[(S^n \wedge X, s_0 \wedge x_0); (Y, y_0)\big]
\]
has the structure of a group for \( n \geq 1 \) and an abelian group for \( n \geq 2 \), where the operation
\[
\ast: \pi[S^n\wedge X; Y] \times \pi[S^n\wedge X; Y] \to \pi[S^n\wedge X; Y]
\]
(base points omitted) is defined as follows:
\[
(f \ast g)([z,u]) =
\begin{cases}
f([z,2u]) & \text{if } 0 \leq u \leq \frac{1}{2}, \\
g([z,2u - 1]) & \text{if } \frac{1}{2} \leq u \leq 1,
\end{cases}
\]
where we use the identification
\[
S^n \wedge X = S^1 \wedge (S^{n-1} \wedge X) = \Sigma(S^{n-1} \wedge X),
\]
with \( z \in S^{n-1} \wedge X \) and \( u \in [0,1] \). Recall that \( S^0 \wedge X \cong X \) for any space \( X \).
\end{lem}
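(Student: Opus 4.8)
The plan is to run the classical co-$H$-space argument, treating $S^n\wedge X=\Sigma(S^{n-1}\wedge X)$ as a reduced suspension. Write $A=S^{n-1}\wedge X$, so that $S^n\wedge X=\Sigma A$ with points $[z,u]$ for $z\in A$ and $u\in[0,1]$, subject to the reduced-suspension identifications collapsing $A\times\{0\}$, $A\times\{1\}$, and $\{a_0\}\times[0,1]$ to the basepoint. The operation $\ast$ in the statement is exactly the one induced by the pinch comultiplication $\nu\colon\Sigma A\to\Sigma A\vee\Sigma A$ that crushes the equator $A\times\{1/2\}$ to the wedge point, via $f\ast g=(f\vee g)\circ\nu$; indeed, reading out this composite reproduces precisely the case formula of the statement. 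So it suffices to verify the cogroup axioms for $\nu$, equivalently the group axioms for $\ast$ on $\pi[\Sigma A;Y]$.

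First I would check that $\ast$ is well defined on homotopy classes: given based homotopies $F$ from $f$ to $f'$ and $G$ from $g$ to $g'$, applying the same case formula levelwise in the homotopy parameter produces a based homotopy from $f\ast g$ to $f'\ast g'$. Next, the identity element is the class of the constant map $c$ at $y_0$; the homotopies $f\ast c\simeq f$ and $c\ast f\simeq f$ are obtained by reparametrizing the suspension coordinate $u$ so that the nontrivial half gradually fills all of $[0,1]$ while the constant half shrinks away. Associativity follows from the standard reparametrization argument: $(f\ast g)\ast h$ and $f\ast(g\ast h)$ differ only in how $[0,1]$ is subdivided into three subintervals, and a linear reparametrization in $u$ gives a based homotopy between them. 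For inverses I would set $\bar f([z,u])=f([z,1-u])$ and use the usual fold-back homotopy to show $f\ast\bar f\simeq c$. These steps establish the group structure for all $n\ge1$.

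For commutativity when $n\ge2$, I would invoke the extra suspension coordinate. Writing $S^n\wedge X=S^1\wedge\bigl(S^1\wedge(S^{n-2}\wedge X)\bigr)=\Sigma^2(S^{n-2}\wedge X)$ exhibits the source as a double suspension, so there are two pinch comultiplications, one for each suspension coordinate, giving two operations $\ast_1$ and $\ast_2$ on $\pi[S^n\wedge X;Y]$. They share the constant map as a common two-sided unit, and because the two pinch directions are independent coordinates the bi-pinch map $\Sigma^2 A\to\bigvee^4\Sigma^2 A$ does not depend on the order of the two collapses; this yields the interchange (middle-four) identity relating $\ast_1$ and $\ast_2$. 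The Eckmann--Hilton argument then forces $\ast_1=\ast_2$ and shows the common operation is commutative; since each $\ast_i$ agrees up to based homotopy with the single operation $\ast$ of the statement, $\ast$ is abelian.

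The main obstacle, such as it is, is bookkeeping rather than conceptual: every reparametrization homotopy in the identity, associativity, and inverse steps must be written so that it preserves the collapsed slices $A\times\{0,1\}\cup\{a_0\}\times[0,1]$ and hence stays basepoint-preserving throughout, which is precisely where the reduced (rather than unreduced) suspension is used. For commutativity, the only real point to verify is the interchange law, namely that pinching in the two suspension coordinates commutes up to based homotopy; this is immediate once the bi-pinch map is written out explicitly. Alternatively, since both references \cite{whitehead} and \cite{Pejs1} carry out these verifications in full, I would cite them for the routine homotopies and present only the identification of $\ast$ with the suspension comultiplication and the Eckmann--Hilton step in detail.
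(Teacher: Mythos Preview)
Your argument is correct and is precisely the classical suspension/co-$H$-space proof one finds in the cited references: identify $\ast$ with the operation induced by the pinch map $\nu:\Sigma A\to\Sigma A\vee\Sigma A$, verify the cogroup axioms by the usual reparametrization homotopies, and obtain commutativity for $n\ge2$ via the Eckmann--Hilton argument using the two independent suspension coordinates. There is nothing to compare against here, because the paper does not supply its own proof of this lemma; it is stated as a citation of \cite[Corollaries~5.15 and~5.22]{whitehead} and \cite{Pejs1}, so your write-up is essentially reproducing what those references contain.
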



A pointed map \(\varphi : (X, x_0) \to (Y, y_0)\) induces a map on the smash product, \(\id_{S^1} \wedge \varphi: S^1 \wedge X \to S^1 \wedge Y\), defined as the map induced by the quotient of \(\id_{S^1} \times \varphi: S^1 \times X \to S^1 \times Y\).  More precisely, if $\varphi(x)=\sum \lambda_i y_i$, then
\[
(\id_{S^1} \wedge \varphi)(s\wedge x)=\sum \lambda_i (s\wedge y_i) \text{ for }s\in S^1,x\in X.
\]


Now, given two spaces $Z,W$, we will define two mappings, \( S \) and \( S^w \), referred to as suspension mappings.
Specifically, we define
\[ S: \pi[(Z,z_0);(W,w_0)]\to \pi[(S^1\wedge Z,s_0\wedge z_0);(S^1\wedge W,s_0\wedge w_0)] \]
and
\[ S^w: \pi^w[(Z,z_0);(W,w_0)]\to \pi^w[(S^1\wedge Z,s_0\wedge z_0);(S^1\wedge W,s_0\wedge w_0)]  \]
by
\[
S[f]:=[\id_{S^1}\wedge f] \text{ and } S^w[\varphi]:=[\id_{S^1}\wedge\varphi],
\]
respectively.\\

\begin{rem}
From this point onward, we will omit base points in the notation.
\end{rem}


Recall the following extended version of the  Freudenthal suspension theorem:
\begin{lem}(\cite[Chapter XII]{whitehead} and \cite{Pejs1, Pejs2})
Let \(Y\) be an \(m\)-connected CW-complex and \(X\) a CW-complex of dimension \(\leq 2m\). Then the suspension map
\begin{equation*}
S: \pi[X;Y]\to \pi[S^{1} \wedge X; S^{1} \wedge Y]
\end{equation*}
is an isomorphism.  In particular, the group structure of $\pi[S^1 \wedge X; S^1 \wedge Y]$ can be carried by $S$ in such a way that $S$ becomes a group homomorphism.
\end{lem}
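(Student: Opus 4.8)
The plan is to reduce this generalized suspension statement to the classical Freudenthal theorem (the case $X=S^k$) by means of the loop--suspension adjunction, and then to promote the resulting statement about homotopy groups to one about $\pi[X;-]$ by obstruction theory. First I would invoke the natural adjunction bijection
\[ \pi[S^1\wedge X; Z] \cong \pi[X; \Omega Z], \]
valid for pointed CW-complexes, and apply it with $Z = S^1\wedge Y = \Sigma Y$ to obtain $\pi[\Sigma X;\Sigma Y] \cong \pi[X;\Omega\Sigma Y]$. Under this identification the suspension map $S$ is carried to post-composition $\eta_*\colon \pi[X;Y]\to \pi[X;\Omega\Sigma Y]$ with the unit $\eta\colon Y\to \Omega\Sigma Y$ of the adjunction (explicitly, $\eta(y)$ is the loop $t\mapsto t\wedge y$). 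Thus it suffices to prove that $\eta_*$ is a bijection.

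Next I would estimate the connectivity of $\eta$ itself. The classical Freudenthal suspension theorem, applied to spheres $X=S^k$, states precisely that $\eta_*\colon \pi_k(Y)\to \pi_k(\Omega\Sigma Y)\cong \pi_{k+1}(\Sigma Y)$ is an isomorphism for $k\le 2m$ and an epimorphism for $k=2m+1$ whenever $Y$ is $m$-connected. Feeding these facts into the long exact homotopy sequence of the homotopy fiber $F$ of $\eta$ shows that $F$ is $2m$-connected. I would then pass from homotopy groups to $\pi[X;-]$ by the standard obstruction-theoretic principle: if the homotopy fiber of a map of pointed CW-complexes is $r$-connected, then the induced map on $\pi[K;-]$ is a bijection for every CW-complex $K$ with $\dim K\le r$, and a surjection when $\dim K = r+1$. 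With $r=2m$ and the hypothesis $\dim X\le 2m$, this yields that $\eta_*$, and hence $S$, is a bijection, which is the content of the lemma. The final clause is then formal: the group structure on $\pi[S^1\wedge X;S^1\wedge Y]$ (coming from the co-$H$ comultiplication on $\Sigma X$ as in the earlier lemma) may be transported back along the bijection $S^{-1}$, so that $S$ becomes a group isomorphism; I would note that $\pi[X;Y]$ carries no intrinsic group structure in general, and that ``isomorphism'' must therefore be read as ``bijection of pointed sets'' equipped with this transported structure.

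The hard part will be the genuinely substantive input, namely the connectivity estimate for $\eta$: this is exactly the classical Freudenthal theorem, whose proof requires either a cellular excision argument (the Blakers--Massey theorem) or a James-construction model for $\Omega\Sigma Y$, neither of which is a routine calculation. A secondary technical obstacle is the obstruction-theoretic step, which must be carried out with care because $\pi[X;-]$ is only a pointed set: establishing that a map with $2m$-connected fiber induces a \emph{bijection} (not merely an isomorphism detected by a five-lemma argument) requires a direct cell-by-cell extension argument for surjectivity together with a homotopy-of-extensions argument for injectivity, using $\dim X\le 2m$ at each stage. Both ingredients are classical and fully developed in \cite[Chapter XII]{whitehead}, which is why I would ultimately present the result as a citation rather than reproduce these arguments in full.
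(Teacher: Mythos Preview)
The paper does not supply its own proof of this lemma: it is stated with the citations to \cite[Chapter~XII]{whitehead} and \cite{Pejs1,Pejs2} and no argument is given. Your outline is a correct sketch of the standard proof (loop--suspension adjunction, Freudenthal connectivity of $\eta\colon Y\to\Omega\Sigma Y$, then obstruction theory to pass from homotopy groups to $\pi[X;-]$), which is precisely the route taken in Whitehead; since you yourself conclude that the result should ultimately be presented as a citation, your approach and the paper's coincide.
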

From the above lemma it follows immediately the following corollary:
\begin{cor}(\cite[Chapter XII]{whitehead})
Let \(Y\) be an \(m\)-connected CW-complex and \(X\) a CW-complex of dimension \(\leq 2m\). Then the suspension map
\begin{equation*}
S: \pi[S^n \wedge X; S^n \wedge Y]\to \pi[S^{n+1} \wedge X; S^{n+1} \wedge Y] \text{ given by }S[f]=[\id_{S^1} \wedge f],
\end{equation*}
for all $n\geq 1$, is an isomorphism. In particular, $S$ is a group homomorphism for all $n\geq 1$,
\end{cor}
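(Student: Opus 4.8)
The plan is to derive this corollary directly from the preceding Lemma (the extended Freudenthal suspension theorem), applying it not to $X$ and $Y$ but to the smashed spaces $X' = S^n \wedge X$ and $Y' = S^n \wedge Y$, with the connectivity parameter shifted from $m$ to $m' = m+n$. Everything of substance is contained in the Lemma; the corollary is a matter of checking that its hypotheses survive smashing with $S^n$, and then re-identifying the target via the smash-product identities (a)--(d).

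First I would verify the hypotheses for $X'$ and $Y'$ relative to $m'$. For connectivity, the key fact is that smashing with $S^n$ raises connectivity by $n$. Since $S^n \wedge Y \cong \Sigma^n Y$ by properties (c)--(e), this reduces to the single-suspension case: if $Y$ is $m$-connected, then $\Sigma Y$ is $(m+1)$-connected, which follows from the suspension isomorphism $\tilde H_i(\Sigma Y)\cong \tilde H_{i-1}(Y)$ together with the Hurewicz theorem (and the fact that $\Sigma Y$ is simply connected). Iterating gives that $Y' = S^n \wedge Y$ is $(m+n)$-connected, i.e.\ $m'$-connected. For the dimension, the top cell of $S^n \wedge X$ has dimension $n + \dim X$, so $\dim X' \leq n + 2m$, and I would record the slack inequality $n + 2m \leq 2(m+n) = 2m'$, valid for all $n\geq 0$. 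Hence $Y'$ is $m'$-connected and $\dim X' \leq 2m'$, exactly as the Lemma requires.

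Next I would invoke the Lemma for $X'$ and $Y'$, obtaining that
\[
S:\pi[S^n\wedge X; S^n\wedge Y] \to \pi\big[S^1\wedge(S^n\wedge X); S^1\wedge(S^n\wedge Y)\big], \qquad S[f]=[\id_{S^1}\wedge f],
\]
is an isomorphism. Using associativity and $S^1\wedge S^n\cong S^{n+1}$ (properties (a)--(d)), I would identify $S^1\wedge(S^n\wedge X)\cong S^{n+1}\wedge X$ and likewise for $Y$, so the target becomes $\pi[S^{n+1}\wedge X; S^{n+1}\wedge Y]$ and the map is precisely the suspension $S[f]=[\id_{S^1}\wedge f]$ of the corollary. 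The assertion that $S$ is a group homomorphism then follows because the operation $\ast$ on these sets is built from the first suspension coordinate, which the assignment $[\,\cdot\,]\mapsto[\id_{S^1}\wedge\,\cdot\,]$ manifestly preserves, exactly as in the Lemma.

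I do not anticipate a genuine obstacle: the delicate analytic content lives entirely in the Lemma, and the corollary is essentially bookkeeping. The only points needing care are the connectivity-raising property of the smash with $S^n$ and the verification of the slack inequality $n+2m\leq 2m+2n$ that guarantees the dimension bound is not violated after the shift $m\mapsto m+n$; both are routine.
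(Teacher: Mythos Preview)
Your proposal is correct and is precisely the argument the paper has in mind: the paper simply declares that the corollary ``follows immediately'' from the preceding Lemma, and your approach---applying the Lemma to $X'=S^n\wedge X$ and $Y'=S^n\wedge Y$ with the shifted connectivity $m'=m+n$, after checking that smashing with $S^n$ raises connectivity by $n$ and dimension by $n$, and then using the smash identities (a)--(d)---is exactly the natural way to unpack that claim.
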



In the case of weighted maps, the above lemma and corollary hold with weaker assumptions.
\begin{lem}(\cite{Pejs1,Pejs2})\label{weighted-suspension}
Let \(X\) and \(Y\) be CW-complexes. Then the suspension map $S^w$
\begin{equation*}
S^w: \pi^w[S^n \wedge X; S^n \wedge Y]\to \pi^w[S^{n+1} \wedge X; S^{n+1} \wedge Y], \text{ for all }n\geq 0,
\end{equation*}
is an isomorphism. In particular, \( S^w \) is a \( \mathbb{Q} \)-homomorphism for all $n\geq 0$ (recall that $S^0\wedge X \cong X$ for any space $X$).
\end{lem}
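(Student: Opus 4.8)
The plan is to prove bijectivity of $S^w$ by identifying index-zero $\Q$-weighted maps with ordinary based maps into a topological-group model of the free $\Q$-module on $Y$, thereby converting the statement into one about an ordinary cohomology theory, where suspension is automatically an isomorphism with no connectivity or dimension hypotheses. First I would dispose of the formal parts: that $S^w$ is well-defined on $w$-homotopy classes and is a $\Q$-homomorphism follows directly from the explicit formula $(\id_{S^1}\wedge\varphi)(s\wedge x)=\sum_i\lambda_i\,(s\wedge y_i)$, which is $\Q$-linear in the weights, so that $\id_{S^1}\wedge(-)$ respects the addition and scalar multiplication of weighted maps defined in Remark \ref{operation-weighte-maps}.

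For the core argument I would exploit the identification $\Psi(\mathcal S)=\Psi(\mathcal W)$ from Theorems \ref{WsubsetS} and \ref{SsubsetW}. A symmetric product map is a map into $\SP^n(Y)$; allowing $\Q$-weights of index $0$ amounts to passing from the finite symmetric products to the topologized free $\Q$-module on $Y$, restricted to its augmentation kernel (the index-zero part), which I denote $\widetilde{\Q}(Y)$. The first key step is a representability statement: a pointed index-zero $\Q$-weighted map $X\to Y$ is the same datum as a based continuous map $X\to\widetilde{\Q}(Y)$, with $w$-homotopies corresponding to based homotopies, so that
\[
\pi^w[X;Y]\;\cong\;[X;\widetilde{\Q}(Y)]
\]
naturally in $X$ and $Y$. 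This is the weighted analogue of the configuration-space correspondence of Theorem \ref{mapcorrespondence}, carried out at the level of the free module rather than $C_n(Y)$, and it requires translating the local-constancy condition of Definition \ref{weighted map} into continuity into $\widetilde{\Q}(Y)$.

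The second key step is to apply the rational Dold--Thom theorem: $\pi_k(\widetilde{\Q}(Y))\cong\widetilde H_k(Y;\Q)$, and the natural maps assemble $\{\widetilde{\Q}(S^n\wedge Y)\}_n$ into an $\Omega$-spectrum, so that $\widetilde{\Q}(Y)\simeq\prod_k K(\widetilde H_k(Y;\Q),k)$. Consequently
\[
\pi^w[X;Y]\;\cong\;\prod_k \widetilde H^k\!\big(X;\widetilde H_k(Y;\Q)\big),
\]
an ordinary reduced cohomology group of $X$. Under this identification $S^w$ corresponds to the composite of the cohomology suspension isomorphism $\widetilde H^k(X;M)\cong\widetilde H^{k+1}(S^1\wedge X;M)$ with the shift $\widetilde H_k(Y;\Q)\cong\widetilde H_{k+1}(S^1\wedge Y;\Q)$; the two degree shifts cancel, so that after reindexing $S^w$ is the identity on $\prod_k\widetilde H^k(X;\widetilde H_k(Y;\Q))$, hence an isomorphism for every $n\ge0$.

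The main obstacle is the representability step, not the formal cohomological conclusion. The delicate points are: topologizing the free $\Q$-module on a possibly non-compact CW-complex so that upper semicontinuity and the summation condition of Definition \ref{weighted map} translate exactly into continuity; checking that the index-zero and pointedness conditions cut out precisely the augmentation kernel and the based component; and verifying that $\id_{S^1}\wedge(-)$ is carried to the $\Omega$-spectrum structure map, so that $S^w$ genuinely realizes the suspension isomorphism. As a fallback avoiding Dold--Thom, I would note that for $n\ge\dim X$ the splitting Theorem \ref{weighted-equicardinal} reduces every weighted map to a $\Q$-combination of single-valued maps in the classical stable range, where the preceding Freudenthal lemma applies directly; the genuinely weighted input is exactly what pushes the isomorphism below this range, where additivity of weighted maps annihilates the quadratic Whitehead-product and Hopf-invariant obstructions that otherwise block classical desuspension.
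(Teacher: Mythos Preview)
The paper does not prove this lemma at all: it is simply quoted from \cite{Pejs1,Pejs2} and then used as a black box (in particular to show that $h_n^w$ is an isomorphism in diagram \eqref{diagram-h-n}). So there is no ``paper's own proof'' to compare against; your task was really to reconstruct Pejsachowicz's argument.

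Your main line---identify index-zero $\Q$-weighted maps with based maps into a topologized augmentation kernel $\widetilde{\Q}(Y)$ of the free $\Q$-module on $Y$, then invoke a rational Dold--Thom statement to turn $S^w$ into a cohomology suspension isomorphism---is the right idea and is in the spirit of \cite{Pejs1,Pejs2}. But, as you yourself say, the representability step is the entire content, and you have only named it, not carried it out. Concretely: Theorems \ref{WsubsetS} and \ref{SsubsetW} concern $\N$-weighted maps of a fixed index $n$ and the finite symmetric product $\SP^n(Y)$; extending this to $\Q$-coefficients and index zero, and showing that the resulting object is an $\Omega$-spectrum whose structure maps are exactly induced by $\id_{S^1}\wedge(-)$, is nontrivial and is precisely what needs to be proved. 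Until that is done, the cohomological conclusion is not available.

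Your fallback is circular. Theorem \ref{weighted-equicardinal} is proved in the paper as Corollary \ref{main-result-2}, which rests on Lemma \ref{main-result}, which in turn uses that $h_0^w$ is an isomorphism---and that is exactly the content of Lemma \ref{weighted-suspension}. So you cannot invoke the splitting result to prove the suspension isomorphism in any range; the logical dependence goes the other way.
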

Since the smash operation  induces a map of $\pi[X;Y]$ into $\pi[S^1\wedge X;S^1\wedge Y]$, we can iterate
the procedure to obtain an infinite sequence of homomorphisms
\begin{multline*}
\pi[X;Y]\xrightarrow{S} \pi[S^1\wedge X;S^1\wedge Y]\xrightarrow{S} \pi[S^2\wedge X;S^2\wedge Y]\xrightarrow{S}\cdots \\
\xrightarrow{S} \pi[S^n\wedge X;S^n\wedge Y]\xrightarrow{S} \cdots.
\end{multline*}
Thus we obtain the direct limit
\begin{equation*}
\{X,Y\}:=\lim_{\longrightarrow}\pi[S^n\wedge X;S^n\wedge Y]
\end{equation*}
along with a family of homomorphisms:
\begin{equation*}
\{h_n: \pi[S^n\wedge X;S^n\wedge Y]\to \{X,Y\}\mid n\in \mathbb{N}\}
\end{equation*}
which satisfy the commutative diagram:
\begin{align*}
\xymatrix@C=25pt@R=25pt{
\pi[S^n\wedge X;S^n\wedge Y]\ar[r]^-{h_n}\ar[d]_{S} &  \{X,Y\}
\\
\pi[S^{n+1}\wedge X;S^{n+1}\wedge Y]\ar[ur]_{h_{n+1}}. &
 }
\end{align*}
Similarly as above, we obtain an infinite sequence for weighted maps:
\begin{multline*}
\pi^w[X;Y]\to \pi^w[S^1\wedge X;S^1\wedge Y]\to \pi^w[S^2\wedge X;S^2\wedge Y]\to\cdots \\
\to \pi^w[S^n\wedge X;S^n\wedge Y]\to \cdots
\end{multline*}
and the direct limit
\begin{equation*}
\{X,Y\}^w:=\lim_{\longrightarrow}\pi^w[S^n\wedge X;S^n\wedge Y]
\end{equation*}
together with a family of homomorphisms:
\begin{equation*}
\{h_n^w: \pi^w[S^n\wedge X;S^n\wedge Y]\to \{X,Y\}^w\mid n\in \mathbb{N}\}
\end{equation*}
which satisfy the commutative diagram:
\begin{align*}
\xymatrix@C=25pt@R=25pt{
\pi^w[S^n\wedge X;S^n\wedge Y]\ar[r]^-{h_n^w}\ar[d]_{S^{w}} &  \{X,Y\}^w
\\
\pi^w[S^{n+1}\wedge X;S^{n+1}\wedge Y]\ar[ur]_{h_{n+1}^w}. &
 }
\end{align*}
However, in this case Lemma \ref{weighted-suspension} implies that

\begin{equation}\label{iso}
\{X,Y\}^w:=\lim_{\longrightarrow}\pi^w[S^n\wedge X;S^n\wedge Y]\cong\pi^w[X;Y],
\end{equation}
and hence
\begin{equation*}
h_n^w: \pi^w[S^n\wedge X;S^n\wedge Y]\xrightarrow{\cong} \{X,Y\}^w\text{ for all }n\in \mathbb{N}.
\end{equation*}
For any space \((X,x_0) \),  \((Y,y_0) \) and $(S^n,s_0^n)$, consider a map
\begin{equation}\label{J-n}
J_n: \pi[S^n\wedge X;S^n\wedge Y]\to \pi^w [S^n\wedge X;S^n\wedge Y]
\end{equation}
given by
\begin{equation*}
J_n[f]=[f-s_0^n\wedge y_0],
\end{equation*}
where $s_0^n\wedge y_0$ represents the constant map.




\begin{lem}(\cite{Pejs1,Pejs2})
If \( X \) and \( Y \) are CW-complexes and \( n \geq 1 \), then \( J_n \) is a homomorphism. If \( n = 0 \), then \( J_0 \) is a homomorphism under the assumption that \( Y \) is an \( m \)-connected CW-complex and \( X \) is a CW-complex of dimension at most \( 2m \).
\end{lem}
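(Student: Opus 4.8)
The plan is to verify the homomorphism identity $J_n([f]\ast[g]) = J_n[f] + J_n[g]$ by a direct geometric argument for $n\ge 1$, and then to deduce the case $n=0$ from naturality of $J$ under suspension together with the suspension isomorphisms. Recall that the operation $\ast$ on $\pi[S^n\wedge X;S^n\wedge Y]$ uses the suspension coordinate, while the operation $+$ on $\pi^w[S^n\wedge X;S^n\wedge Y]$ is the pointwise $\mathbb Q$-module addition of weighted maps from Remark \ref{operation-weighte-maps}; the whole point is to reconcile these two a priori unrelated structures.

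For $n\ge 1$, write $Z = S^{n-1}\wedge X$, so $S^n\wedge X = \Sigma Z$ with points $(z,u)$, $z\in Z$, $u\in[0,1]$, and let $c=s_0^n\wedge y_0$ be the constant map. I would first compress the supports of $f-c$ and $g-c$ into the two halves of the suspension coordinate. Let $r,r'\colon\Sigma Z\to\Sigma Z$ be the reparametrizations $r(z,u)=(z,\rho(u))$ and $r'(z,u)=(z,\rho'(u))$, where $\rho$ doubles $[0,\tfrac12]$ and sends $[\tfrac12,1]$ to $1$, while $\rho'$ sends $[0,\tfrac12]$ to $0$ and reparametrizes $[\tfrac12,1]$ linearly onto $[0,1]$. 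Since $\rho,\rho'$ are homotopic rel $\{0,1\}$ to $\mathrm{id}_{[0,1]}$, the maps $r,r'$ are pointed-homotopic to the identity (via $(z,u)\mapsto(z,(1-t)u+t\rho(u))$, which is well defined on the reduced suspension because both ends $u=0,1$ and the slice $\{z_0\}\times[0,1]$ stay at the base point). Hence $f\simeq f\circ r$ and $g\simeq g\circ r'$, so that $J_n[f]=[\tilde f]$ and $J_n[g]=[\tilde g]$, where $\tilde f=(f\circ r)-c$ vanishes on the upper half and $\tilde g=(g\circ r')-c$ vanishes on the lower half.

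The key computation is that $\tilde f+\tilde g$ agrees \emph{on the nose} with $(f\ast g)-c$: on the lower half $u\le\tfrac12$ both equal $f(z,2u)-(s_0^n\wedge y_0)$, and on the upper half $u\ge\tfrac12$ both equal $g(z,2u-1)-(s_0^n\wedge y_0)$, the two formulas matching at $u=\tfrac12$ since both pieces are the zero element there. Therefore $J_n[f]+J_n[g]=[\tilde f+\tilde g]=[(f\ast g)-c]=J_n([f]\ast[g])$. The main care is bookkeeping: one must check that every weighted map in sight has index $0$ (the coefficients $+1$ and $-1$ cancel), that all homotopies are pointed, and that the compressions are genuine $w$-homotopies rather than merely single-valued homotopies; this, and confirming that $r\simeq\mathrm{id}$ is realized by an honest pointed homotopy of $\Sigma Z$, is where the argument must be executed carefully.

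For $n=0$ I would reduce to the case just treated using the commuting square
\[ S^w\circ J_n = J_{n+1}\circ S. \]
This holds because, applying the formula $(\mathrm{id}_{S^1}\wedge\varphi)(s\wedge x)=\sum\lambda_i(s\wedge y_i)$ to $\varphi=f-c$, one gets $\mathrm{id}_{S^1}\wedge(f-c)=(\mathrm{id}_{S^1}\wedge f)-c'$, since $s\wedge(s_0^n\wedge y_0)$ is the base point of $S^{n+1}\wedge Y$ and $c'$ is the constant map there. Under the hypotheses that $Y$ is $m$-connected and $\dim X\le 2m$, the Freudenthal lemma above makes $S\colon\pi[X;Y]\to\pi[S^1\wedge X;S^1\wedge Y]$ a group isomorphism, while $S^w$ is an isomorphism by Lemma \ref{weighted-suspension}. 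Since $J_1$ is a homomorphism by the case $n\ge 1$, the identity $J_0=(S^w)^{-1}\circ J_1\circ S$ exhibits $J_0$ as a composite of homomorphisms, and hence a homomorphism, which completes the proof.
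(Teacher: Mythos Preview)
The paper does not prove this lemma; it is stated with a citation to \cite{Pejs1,Pejs2} and no proof is given in the text. Your argument is correct and is essentially the classical one for results of this type.

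For $n\ge 1$ your compression of $f$ and $g$ into the two halves of the suspension coordinate, together with the observation that at the base-point end a pointed map minus the constant gives the zero weighted element, yields $\tilde f+\tilde g=(f\ast g)-c$ exactly, and the homotopies $r_t,r_t'$ descend to $\Sigma Z$ as you verify. For $n=0$ your use of the naturality square $S^w\circ J_0=J_1\circ S$ (which holds on the nose since $s\wedge y_0$ is the base point of $S^1\wedge Y$), combined with the Freudenthal isomorphism for $S$ and Lemma~\ref{weighted-suspension} for $S^w$, correctly exhibits $J_0$ as a composite of homomorphisms, the group structure on $\pi[X;Y]$ being by definition the one transported from $\pi[S^1\wedge X;S^1\wedge Y]$ via $S$.
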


Taking a direct limit in  the system:
\begin{equation*}
\{J_n\}: \{\pi[S^n\wedge X;S^n\wedge Y]\}\to \{\pi^w[S^n\wedge X;S^n\wedge Y]\},
\end{equation*}
we obtain a limiting homomorphism:
\begin{equation*}
J_{\infty}: \{X,Y\}\to \{X,Y\}^w
\end{equation*}
and a commutative diagram:
\begin{align*}
\xymatrix@C=35pt@R=25pt{
\pi[S^n\wedge X;S^n\wedge Y]\ar[r]^-{J_n}\ar[d]_{h_n} & \ar[d]_{h_n^w}^{\cong} \pi^w[S^n\wedge X;S^n\wedge Y]
\\
\{X,Y\}\ar[r]^-{J_{\infty}} & \{X,Y\}^w.
 }
\end{align*}
Now we modify the above diagram by using the tensor product:
\begin{align}\label{diagram-h-n}
\begin{split}
\xymatrix@C=35pt@R=25pt{
\pi[S^n\wedge X;S^n\wedge Y]\otimes \mathbb{Q}\ar[r]^-{J_n\otimes \id_{\mathbb{Q}}}\ar[d]_{h_n\otimes \id_{\mathbb{Q}}} & \ar[d]_{h_n^w\otimes \id_{\mathbb{Q}}}^{\cong}
\pi^w[S^n\wedge X;S^n\wedge Y]\otimes \mathbb{Q}
\\
\{X,Y\}\otimes \mathbb{Q}\ar[r]^-{J_{\infty}\otimes \id_{\mathbb{Q}}} & \{X,Y\}^w\otimes \mathbb{Q},
 }
 \end{split}
\end{align}
for all $n\geq 0$. We will need the following lemma:
\begin{lem}(\cite{Pejs2})
$J_{\infty}\otimes \id_{\mathbb{Q}}: \{X,Y\}\otimes \mathbb{Q}\to \{X,Y\}^w\otimes \mathbb{Q} $ is an isomorphism.
\end{lem}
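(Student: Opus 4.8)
The plan is to reduce the statement to a comparison in the stable range and then identify both sides with rational homology by way of the Dold--Thom theorem.

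First I would exploit exactness. Filtered colimits and the functor $-\otimes\Q$ are both exact, so $J_\infty\otimes\id_\Q$ is literally the colimit over $n$ of the maps $J_n\otimes\id_\Q$ appearing along the top of diagram \eqref{diagram-h-n}. By Lemma \ref{weighted-suspension} and the resulting isomorphism \eqref{iso}, the weighted direct system is already stable: each right-hand map $h_n^w\otimes\id_\Q$ in \eqref{diagram-h-n} is an isomorphism, so $\{X,Y\}^w\otimes\Q\cong\pi^w[S^n\wedge X;S^n\wedge Y]\otimes\Q$ at every level $n$. Consequently, by commutativity of \eqref{diagram-h-n}, proving that $J_\infty\otimes\id_\Q$ is an isomorphism is equivalent to proving that $J_n\otimes\id_\Q$ becomes an isomorphism in the colimit; equivalently, that it is an isomorphism once $S^n\wedge Y$ is highly connected relative to the dimension of $S^n\wedge X$. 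For infinite-dimensional $X$ this final reduction is performed over the skeleta of $X$, using that both functors convert the skeletal colimit into an appropriate (co)limit.

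The conceptual heart is to interpret an index-$0$ weighted map $A\multimap B$ (with $A=S^n\wedge X$ and $B=S^n\wedge Y$) as a genuine based map $A\to AG_\Q(B)$, where $AG_\Q(B)$ is the rationalized free topological abelian group on $B$ (the Dold--Thom construction with $\Q$ coefficients): the continuous assignment $a\mapsto\sum_y\langle\phi(a),y\rangle\,y$ of a finite rational $0$-chain of total weight $0$ is exactly such a map. Under this identification the homomorphism $J_n$ of \eqref{J-n} becomes post-composition with the canonical map $B\to AG_\Q(B)$, i.e.\ the rational Hurewicz comparison. The Dold--Thom theorem gives $\pi_k(AG_\Q(B))\cong\tilde H_k(B;\Q)$, so $AG_\Q(B)$ is a product of rational Eilenberg--MacLane spaces and
\[ \pi^w[A;B]\otimes\Q\;\cong\;[A,AG_\Q(B)]\;\cong\;\prod_k \tilde H^{k}\!\big(A;\tilde H_k(B;\Q)\big). \]
On the other hand, rational stable homotopy splits, giving $\{X,Y\}\otimes\Q\cong\bigoplus_k\mathrm{Hom}_\Q\big(\tilde H_k(X;\Q),\tilde H_k(Y;\Q)\big)$; by rational universal coefficients this matches the previous display once $A,B$ are reindexed to the stabilized $X,Y$. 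Under these identifications $J_\infty\otimes\id_\Q$ is carried to the rational Hurewicz map from stable homotopy to homology, which is an isomorphism. Note that surjectivity of this map amounts to a stable $\Q$-form of a splitting statement of the type of Theorem \ref{weighted-equicardinal}: an index-$0$ weighted map is, stably and rationally, a $\Q$-combination $\sum_i\lambda_i f_i$ of genuine maps, and since the index vanishes the constants cancel, so $\sum_i\lambda_i f_i=\sum_i\lambda_i(f_i-\mathrm{const})$ lies in the $\Q$-span of $\im J_n$; here this is obtained \emph{from} the Dold--Thom identification rather than assumed.

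I expect the main obstacle to be the faithful matching of $J_n$ with the Hurewicz comparison, together with the bookkeeping for non-finite complexes. Concretely, one must check that the weighted-homotopy relation corresponds exactly to homotopy of maps into $AG_\Q(B)$, so that the Dold--Thom identification is natural in the suspension variable and commutes with the structure maps $S^w$; and one must control the passage from the product $\prod_k$ (over skeleta) to the stable colimit, where $\lim^1$-terms could in principle intervene. Rationally and in the stable range these terms vanish, but verifying this cleanly is the delicate point. An alternative, more self-contained route --- likely the one taken in \cite{Pejs2} --- is to observe that $\{X,-\}^w\otimes\Q$ and $\{X,-\}\otimes\Q$ are reduced homology-type functors in the second variable, that $J_\infty\otimes\id_\Q$ is a natural transformation between them, and that it is an isomorphism when $Y$ is a sphere; a Mayer--Vietoris and cellular-induction argument then upgrades this to all CW-complexes.
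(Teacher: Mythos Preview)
The paper does not prove this lemma; it is quoted directly from \cite{Pejs2} and used as a black box in the argument for Lemma~\ref{main-result}. There is therefore no proof in the present paper to compare your proposal against.

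For what it is worth, your outline is along reasonable lines. Identifying index-zero $\Q$-weighted maps with based maps into a Dold--Thom model $AG_\Q(B)$ and then invoking the rational splitting of stable homotopy is the right conceptual picture, and the alternative route you sketch at the end --- treating both $\{X,-\}\otimes\Q$ and $\{X,-\}^w\otimes\Q$ as reduced homology-type functors, checking that $J_\infty\otimes\id_\Q$ is an isomorphism on spheres, and extending by cellular induction --- is a clean axiomatic way to finish. The technical points you flag (that weighted homotopy matches ordinary homotopy into $AG_\Q(B)$, and the $\lim^1$ control for infinite complexes) are genuine items to verify rather than errors in strategy; the original reference \cite{Pejs2} handles these within its own framework, but the present paper does not reproduce any of that material.
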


Now we are ready to prove the main lemma of this section:
\begin{lem}\label{main-result}
If $Y$ is $m$-connected and $\dim X\leq 2m$, then
\[
J_{0}\otimes \id_{\mathbb{Q}}: \pi [X;Y]\otimes \mathbb{Q}\to \pi^w[X;Y]\otimes \mathbb{Q}=\pi^w[X;Y]
\]
is an isomorphism.
\end{lem}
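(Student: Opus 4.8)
The plan is to read the result off directly from the commutative square \eqref{diagram-h-n} in the case $n=0$, by showing that the three arrows other than $J_0\otimes\id_{\Q}$ are isomorphisms and then invoking commutativity. In that square the bottom arrow $J_{\infty}\otimes\id_{\Q}$ is an isomorphism by the preceding lemma (from \cite{Pejs2}), and the right-hand vertical arrow $h_0^w\otimes\id_{\Q}$ is an isomorphism because $h_0^w$ itself is: this is exactly the content of \eqref{iso}, which follows from Lemma \ref{weighted-suspension} and requires no connectivity hypothesis. I would also note that the identification $\pi^w[X;Y]\otimes\Q=\pi^w[X;Y]$ asserted in the statement is automatic, since $\pi^w[X;Y]$ already carries the structure of a $\Q$-vector space, so tensoring with $\Q$ over $\Z$ changes nothing.

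The one place where the hypotheses on $X$ and $Y$ are genuinely used is in showing that the left-hand vertical arrow $h_0\otimes\id_{\Q}$ is an isomorphism. For this I would first argue that $h_0\colon \pi[X;Y]\to\{X,Y\}$ is itself an isomorphism. Since $\{X,Y\}$ is the direct limit of the suspension tower, it suffices to check that every suspension map $S\colon \pi[S^n\wedge X;S^n\wedge Y]\to \pi[S^{n+1}\wedge X;S^{n+1}\wedge Y]$ is an isomorphism for all $n\ge 0$. The base case $n=0$ is precisely the extended Freudenthal suspension theorem stated above, which applies because $Y$ is $m$-connected and $\dim X\le 2m$; the cases $n\ge 1$ are furnished by its corollary. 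One should observe here that the hypotheses propagate up the tower: $S^1\wedge Y$ is $(m+1)$-connected while $\dim(S^1\wedge X)=\dim X+1\le 2m+1\le 2(m+1)$, so the connectivity-dimension inequality is preserved at every stage. Hence all of the maps $h_n$, and in particular $h_0$, are isomorphisms, and since tensoring an isomorphism of abelian groups with $\Q$ again yields an isomorphism, $h_0\otimes\id_{\Q}$ is an isomorphism.

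Finally, combining the three isomorphisms via commutativity of the $n=0$ square gives
\[
J_0\otimes\id_{\Q}=(h_0^w\otimes\id_{\Q})^{-1}\circ(J_{\infty}\otimes\id_{\Q})\circ(h_0\otimes\id_{\Q}),
\]
a composite of isomorphisms, which is therefore itself an isomorphism, as claimed.

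I expect the main obstacle to be the second paragraph, namely verifying that $h_0$ is an isomorphism. The subtlety is that the quoted corollary only treats suspensions starting from level $n\ge 1$, so one must separately invoke the base-case Freudenthal theorem at $n=0$ and then confirm that the connectivity and dimension hypotheses survive each suspension, so that the entire tower of suspension maps consists of isomorphisms. Everything else is a formal diagram chase once the three surrounding arrows are known to be isomorphisms.
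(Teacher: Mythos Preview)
Your proposal is correct and follows essentially the same approach as the paper: both specialize the commutative square \eqref{diagram-h-n} to $n=0$, observe that $\pi^w[X;Y]\otimes\Q=\pi^w[X;Y]$ since it is already a $\Q$-module, invoke the Freudenthal suspension theorem to conclude that $h_0$ (and hence $h_0\otimes\id_{\Q}$) is an isomorphism, and then read off the conclusion from commutativity. Your argument is in fact more explicit than the paper's, which simply asserts that Freudenthal gives the isomorphism of $h_0$ without spelling out the propagation of connectivity and dimension through the suspension tower.
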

\begin{proof}
At the beginning of the proof, we present the following observations:
\begin{itemize}
\item $\pi^w[X;Y]\otimes \mathbb{Q}=\pi^w[X;Y]$ because $\pi^w[X;Y]$ is a $\mathbb{Q}$-module.
\item The Freudenthal suspension theorem implies that $h_0$ from \eqref{diagram-h-n} is an isomorphism.
\end{itemize}
Thus, for $n=0$, the diagram \eqref{diagram-h-n}  takes the following form:
\begin{align}
\begin{split}
\xymatrix@C=35pt@R=25pt{
\pi[X;Y]\otimes \mathbb{Q}\ar[r]^-{J_0\otimes \id_{\mathbb{Q}}}\ar[d]_{h_0\otimes \id_{\mathbb{Q}}}^{\cong} & \ar[d]_{h_0^w\otimes \id_{\mathbb{Q}}}^{\cong}
\pi^w[X;Y]\otimes \mathbb{Q}
\\
\{X,Y\}\otimes \mathbb{Q}\ar[r]^-{J_{\infty}\otimes \id_{\mathbb{Q}}}_-{\cong} & \{X;Y\}^w\otimes \mathbb{Q},
 }
 \end{split}
\end{align}
which, in turn, implies the desired conclusion.
\end{proof}

The next lemma says that any weighted map can be deformed to a pointed weighted map.

\begin{lem}(\cite{Pejs1})\label{homotopy}
Any weighted map between two pointed and connected CW-complexes is $w$-homotopic (freely) to a pointed weighted map.
\end{lem}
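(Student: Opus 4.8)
The plan is to collapse, up to free $w$-homotopy, the minimal support lying over the base point down to $y_0$, by post-composing $\phi$ with a suitable self-homotopy of $Y$. Let $\phi\colon X\to Y$ be the given weighted map with minimal usc-support $t_0$; since every usc-support takes finite values, so does $t_0$, and I write $t_0(x_0)=\{z_1,\dots,z_k\}$. I will use two facts from Remark~\ref{operation-weighte-maps}: a continuous map is a weighted map, and the class of weighted maps is closed under composition and products. Consequently, if $G\colon Y\times[0,1]\to Y$ is any homotopy, then $\Theta:=G\circ(\phi\times\id_{[0,1]})$, given explicitly by $\Theta(x,t)=\sum_{y}\langle\phi(x),y\rangle\,G(y,t)$, is again a weighted map $X\times[0,1]\to Y$, that is, a free $w$-homotopy from $G_0\circ\phi$ to $G_1\circ\phi$.

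First I would build $G$. Because $Y$ is a connected, and therefore path-connected, CW-complex, I may choose for each $i$ a path $\gamma_i\colon[0,1]\to Y$ with $\gamma_i(0)=z_i$ and $\gamma_i(1)=y_0$. The inclusion of the finite set $F=\{z_1,\dots,z_k\}$ into $Y$ is a cofibration (CW-complexes are locally contractible, so the $z_i$ admit disjoint contractible neighborhoods), so the map defined on $Y\times\{0\}\cup F\times[0,1]$ that equals $\id_Y$ on $Y\times\{0\}$ and $\gamma_i$ on $\{z_i\}\times[0,1]$ extends, by the homotopy extension property, to a homotopy $G\colon Y\times[0,1]\to Y$ with $G_0=\id_Y$ and $G_1(z_i)=y_0$ for every $i$. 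Setting $\psi:=G_1\circ\phi$, the homotopy $\Theta$ above is a free $w$-homotopy from $\phi$ to $\psi$.

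It remains to verify that $\psi$ is pointed, i.e.\ that its minimal support $t_\psi$ satisfies $t_\psi(x_0)=\{y_0\}$. Recall that the graph of $t_\psi$ is the closure of $\{(x,y)\mid\langle\psi(x),y\rangle\neq0\}$, and that $\operatorname{supp}(\psi(x))\subseteq G_1(t_0(x))$. Given any $y\neq y_0$, choose disjoint open sets $W\ni y$ and $W_0\ni y_0$; by continuity of $G_1$ together with $G_1(z_i)=y_0$ there are open sets $O_i\ni z_i$ with $G_1(O_i)\subseteq W_0$, and by upper semicontinuity of $t_0$ there is a neighborhood $N$ of $x_0$ with $t_0(N)\subseteq\bigcup_iO_i$. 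Then $\operatorname{supp}(\psi(x))\subseteq W_0$ for every $x\in N$, so the open set $N\times W$ misses the graph of the support of $\psi$, whence $(x_0,y)$ is not in its closure. Since this holds for every $y\neq y_0$ and $t_\psi(x_0)$ is nonempty, we conclude $t_\psi(x_0)=\{y_0\}$, so $\psi$ is the required pointed weighted map.

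The step I expect to be the main obstacle is this last verification. It is not enough that $G_1$ sends the single value $\phi(x_0)$ into $y_0$; one must ensure that the entire local support near $x_0$ is dragged into a small neighborhood of $y_0$ after composition, and this is precisely where the upper semicontinuity of the \emph{minimal} support $t_0$ (rather than just the pointwise datum $\phi(x_0)$) is essential. A secondary technical point is the cofibration claim for a finite subset of a possibly non-locally-finite CW-complex, which I would settle using the local (equi)contractibility of CW-complexes.
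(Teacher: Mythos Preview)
The paper does not actually prove this lemma: it is quoted from Pejsachowicz \cite{Pejs1} with no argument given. So there is no ``paper's own proof'' to compare against, and your proposal supplies what the paper omits.

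Your argument is correct in outline and in essentially all details. Post-composing with a self-homotopy of $Y$ that drags the finite set $t_0(x_0)$ to $y_0$ is the natural move; the facts you invoke from Remark~\ref{operation-weighte-maps} (continuous maps are weighted, weighted maps compose and take products) are exactly what justifies that $\Theta=G\circ(\phi\times\id_{[0,1]})$ is a $w$-homotopy. The verification at the end is also right: since $G_1\circ t_0$ is a usc-support for $\psi=G_1\circ\phi$, the minimal support satisfies $t_\psi(x_0)\subseteq G_1(t_0(x_0))=\{y_0\}$, and your neighborhood argument makes this explicit at the level of the closure description of the minimal support.

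One small point you flag yourself but do not fully resolve: you assert ``$t_\psi(x_0)$ is nonempty'' without justification. Under the paper's description of the minimal support as the closure of $\{(x,y)\mid\langle\psi(x),y\rangle\neq0\}$, this can fail when $\psi$ vanishes identically on a neighborhood of $x_0$ (which is possible for index-zero weighted maps). In practice this is harmless --- one either adopts the common convention that usc-supports are nonempty-valued, or simply observes that the enlarged support $x\mapsto t_\psi(x)\cup\{y_0\}$ is still a usc-support and adopts that --- but it is worth a sentence. Your remark about the cofibration step is fine: CW-complexes are ANRs, and finite subsets of ANRs are cofibrations, which is more than enough.
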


Now we are ready to prove that a weighted map, under certain conditions on the domain and codomain, can be deformed to a $\Q$-weighted sum of single-valued maps.

\begin{cor}\label{main-result-2}
If $Y$ is $m$-connected and $\dim X\le 2m$, then any weighted map $\varphi: X\to Y$ with weights in $\mathbb Q$ is $w$-homotopic to one of the form $\sum_{i=1}^k \lambda_i f_i$, where $\lambda_i\in \mathbb{Q}\setminus\{0\}$ and $f_i: X\to Y$ is continuous for each $1\leq i\leq k$.
\end{cor}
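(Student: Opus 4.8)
The plan is to deduce the corollary almost directly from Lemma \ref{main-result}, which asserts that $J_0\otimes\id_{\mathbb{Q}}\colon \pi[X;Y]\otimes\mathbb{Q}\to \pi^w[X;Y]$ is an isomorphism, together with Lemma \ref{homotopy}, which lets me assume the weighted map is pointed. No new topology is needed; the one genuine wrinkle is the bookkeeping of the weighted index, since $\pi^w[X;Y]$ consists of index-zero maps while $\varphi$ may have arbitrary index.

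First I would apply Lemma \ref{homotopy} to replace $\varphi$ by a $w$-homotopic pointed weighted map, so that without loss of generality $\varphi$ is pointed, with basepoints $x_0\in X$ and $y_0\in Y$. Let $d=I_w(\varphi)\in\mathbb{Q}$ be its index, and let $c\colon X\to Y$ denote the constant map at $y_0$, regarded via Remark \ref{operation-weighte-maps} as a single-valued (hence weighted) map of index $1$. Then $\psi := \varphi - d\,c$ is a pointed weighted map of index $0$, so its class $[\psi]$ lives in $\pi^w[X;Y]$, exactly the group where Lemma \ref{main-result} applies.

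Next I would invoke the surjectivity of $J_0\otimes\id_{\mathbb{Q}}$: there is an element $\sum_{i=1}^k [g_i]\otimes\lambda_i\in \pi[X;Y]\otimes\mathbb{Q}$, with each $g_i\colon X\to Y$ continuous and $\lambda_i\in\mathbb{Q}$, mapping to $[\psi]$. Unwinding the definition $J_0[g]=[g-c]$ from \eqref{J-n} (recall $S^0\wedge X\cong X$) together with bilinearity of the tensor product gives
\[ [\psi] = (J_0\otimes\id_{\mathbb{Q}})\Big(\sum_{i=1}^k [g_i]\otimes\lambda_i\Big) = \sum_{i=1}^k \lambda_i[g_i-c], \]
so that $\psi$ is $w$-homotopic to $\sum_{i=1}^k \lambda_i(g_i-c)$.

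Finally I would reassemble $\varphi$. Adding back the index term yields
\[ \varphi \;=\; \psi + d\,c \;\simeq\; \sum_{i=1}^k \lambda_i\, g_i + \Big(d-\sum_{i=1}^k\lambda_i\Big)c, \]
a $\mathbb{Q}$-linear combination of the continuous maps $g_1,\dots,g_k$ and the constant map $c$. Discarding any term whose coefficient vanishes and relabeling the survivors produces the asserted form $\sum_{i=1}^k \lambda_i f_i$ with each $\lambda_i\in\mathbb{Q}\setminus\{0\}$ and each $f_i$ continuous. I expect the main obstacle to be purely the index accounting rather than anything deep: one must stay inside the index-zero category $\pi^w[X;Y]$ for the isomorphism of Lemma \ref{main-result} to apply, and the constant map $c$ is precisely the device that absorbs the discrepancy $d-\sum_i\lambda_i$ between the index of $\varphi$ and that of the combination $\sum_i\lambda_i g_i$.
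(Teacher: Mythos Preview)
Your proposal is correct and follows essentially the same route as the paper: reduce to a pointed map via Lemma \ref{homotopy}, subtract a multiple of the constant map at $y_0$ to land in the index-zero category $\pi^w[X;Y]$, apply the surjectivity of $J_0\otimes\id_{\mathbb Q}$ from Lemma \ref{main-result}, and then add the constant term back. Your bookkeeping of the coefficient $d-\sum_i\lambda_i$ on the constant map is slightly more explicit than the paper's version, but the argument is the same.
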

\begin{proof}
By Lemma \ref{homotopy}, we may assume that $\phi$ is a pointed map.

First we prove the special case in which $I_w(\phi)=0$. Since the weighted index is zero, this means that $\phi$ corresponds to an element of $[X;Y]^w$. By Lemma \ref{main-result}, $\phi$ belongs to a homotopy class in the image of $J_0 \otimes \id_\Q$. Since
\[
(J_0\otimes \id_{\mathbb{Q}})([f]\otimes q)=[q(f-y_0)],
\]
we obtain the desired decomposition into a sum of single-valued maps.

Now we show that the general case for $I_w(\phi)\neq 0$ can be reduced to the index zero case. Let $\psi:X\to Y$ be given by
\[ \psi = \phi - I_w(\phi)y_0, \]
and then we will have $I_w(\psi)=0$. Thus by the first part, $\psi$ is $w$-homotopic to a weighted sum of single-valued maps. But we have
\[ \phi = \psi + I_w(\phi)y_0, \]
and so $\phi$ is $w$-homotopic to a weighted sum of single-valued maps as desired.
\end{proof}

\begin{cor}
Under the above assumptions, if \(\varphi: X \to Y\) is an \(\mathbb{N}\)-weighted map, then there exists a number \(k_0 \in \mathbb{N}\) such that \(k_0\varphi: X \to Y\) is \(w\)-homotopic to a map of the form \(\sum_{i=1}^k \beta_i f_i\), where \(\beta_i \in \mathbb{Z} \setminus \{0\}\) and \(f_i: X \to Y\) is continuous for all \(1 \leq i \leq k\).
\end{cor}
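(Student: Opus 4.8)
The plan is to reduce immediately to Corollary \ref{main-result-2} and then merely clear denominators. First I would observe that since $\N \subset \Q$ (as recorded in Remark \ref{two-definitions-of-weighted}), the given $\N$-weighted map $\varphi$ may be regarded as a weighted map with coefficients in $\Q$. The standing hypotheses are exactly those of Corollary \ref{main-result-2}, namely that $Y$ is $m$-connected and $\dim X \le 2m$, so that corollary applies and produces a $w$-homotopy of $\varphi$ to a map of the form $\sum_{i=1}^k \lambda_i f_i$, with each $\lambda_i \in \Q \setminus \{0\}$ and each $f_i \colon X \to Y$ continuous.

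Next I would choose the integer $k_0$. Writing each nonzero rational $\lambda_i$ in lowest terms as $a_i/b_i$, I would take $k_0 \in \N$ to be any common multiple of the denominators $b_1,\dots,b_k$ (for concreteness, their product), so that $k_0 \lambda_i \in \Z$ for every $i$. Because $\lambda_i \neq 0$ and $k_0 \neq 0$, each coefficient $\beta_i := k_0 \lambda_i$ then lies in $\Z \setminus \{0\}$, which is precisely the integrality and nonvanishing required in the statement.

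It remains to transport the scalar $k_0$ across the $w$-homotopy. If $\theta \colon X \times [0,1] \to Y$ is a $w$-homotopy realizing $\varphi \simeq \sum_{i=1}^k \lambda_i f_i$, then the scaled map $k_0\theta$ is again a weighted map by the scaling operation in Remark \ref{operation-weighte-maps}, and (since scaling all weights by $k_0$ does not alter the minimal support) it inherits whatever basepoint behaviour $\theta$ has; thus it is a $w$-homotopy from $k_0\varphi$ to $k_0 \sum_i \lambda_i f_i = \sum_{i=1}^k (k_0\lambda_i) f_i = \sum_{i=1}^k \beta_i f_i$. This yields the desired decomposition. I do not expect any genuine obstacle here: essentially all the content is carried by Corollary \ref{main-result-2}, and the remaining steps are the elementary fact that multiplying finitely many nonzero rationals by a common denominator yields nonzero integers, together with the compatibility of scalar multiplication by a natural number with the $w$-homotopy relation, which is immediate from the weighted-map operations of Remark \ref{operation-weighte-maps}.
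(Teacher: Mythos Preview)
Your argument is correct and follows essentially the same route as the paper: apply Corollary \ref{main-result-2} to write $\varphi$ as $w$-homotopic to $\sum_i \lambda_i f_i$ with $\lambda_i\in\Q\setminus\{0\}$, then take $k_0$ to be the product of the denominators so that $\beta_i=k_0\lambda_i\in\Z\setminus\{0\}$. Your additional remarks justifying that scalar multiplication by $k_0$ carries the $w$-homotopy along are a welcome elaboration of a step the paper leaves implicit.
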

\begin{proof}
From the above corollary it follows that $\varphi: X\to Y$ is $w$-homotopic to to one of the form $\sum_{i=1}^k \lambda_i f_i$, where $\lambda_i\in \mathbb{Q}\setminus\{0\}$ and $f_i: X\to Y$ is continuous, for all $1\leq i\leq k$. Let
\begin{equation*}
\lambda_i=\frac{m_i}{n_i}
\end{equation*}
where $m_i\in \mathbb{Z}$ and $n_i\in \mathbb{N}$, for $1\leq i\leq k$. Now we put
\begin{equation*}
k_0:=n_1\cdot n_2\cdot\ldots\cdot n_k.
\end{equation*}
Then $k_0\varphi$ is $w$-homotopic to $\sum_{i=1}^k \beta_i f_i$, where $\beta_i:=k_0\cdot \lambda_i\in \mathbb{Z}$.
\end{proof}

Recall that our classification of various classes of at-most-$n$-valued maps has shown that the class of $\N$-weighted maps is the same as the class of symmetric product maps. The larger class of weighted maps with weights in $\Q$ will include some maps which are not symmetric product maps. For example the map of Figure \ref{forkfig} can be made a weighted map by assigning weights such that $b=0$ in that figure. It is natural to ask which general at-most-$n$-valued maps can be made into weighted maps with weights in $\Q$.

\begin{open}
Let \( X \) and \( Y \) be metric spaces, and let \( f: X \multimap Y \) be an at-most-\( n \)-valued map. Is there some nontrivial weight function with weights in $\Q$ which makes $f$ into a weighted map?
\end{open}

\section{Topology of the at-most-$n$-valued configuration space of the circle}
In this section we show that $C_n(S^1)$ has the homotopy type of $S^n$ when $n$ is odd, and $S^{n-1}$ when $n$ is even.

Recall that $C_n(S^1)$ is defined as a quotient of the cartesian product $(S^1)^n$. For this section, it will be convenient to consider $S^1$ as the interval $[0,1]$ with endpoints identified, and $C_n(S^1)$ as a quotient of $[0,1]^n$ where we additionally identify $0$ and $1$.

Let $\Delta_n$ be the $n$-simplex given by vertices as $\Delta_n = \langle b_0, \dots, b_n\rangle$, where $b_k = e_1 + \dots + e_k$, the $e_i's$ are the standard basis vectors
for $\R^n$, and $b_0$ is the origin of  $\R^n$. In coordinates we have   $b_0=(0,\dots,0) \  b_1=(1,0,\dots,0), b_2=(1,1,0,\dots,0), \dots, b_n=(1,1,\dots,1)$. Given an
 element $w$ of $[0,1]^n$ there is an element $w_0\in \Delta_n$ such that they define the same element  in  $C_n(S^1)$. Furthermore, any two points of the interior of $\Delta_n$ will represent two different points of $C_n(S^1)$.

Then $C_n(S^1)$  is obtained  as the quotient of $\Delta_n$ by a relation on the boundary of $\Delta_n$, where certain faces of the boundary are identified. Let  $q:\Delta_n \to C_n(S^1)$ be the quotient map which identifies the appropriate faces.

For $k\le n$, we denote a $k$-face of $\Delta_n$ by $\langle b_{i_1},\dots,b_{i_k}\rangle$ for distinct $i_j\in \{0,\dots,n\}$. We say that a $k$-face $\sigma \subset \Delta_n$ is \emph{extremal} if it contains both $b_0$ and $b_n$. Otherwise, it is \emph{non-extremal}.

\begin{lem}\label{faceslemma}
When $k$-faces of $\Delta_n$ are identified via $q$, there are two classes: the extremal faces, and the non-extremal faces.
\end{lem}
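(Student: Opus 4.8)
The plan is to compute directly the image under $q$ of the interior of each $k$-face and to show that this image depends only on whether the face is extremal. First I would identify $\Delta_n$ with the set of non-increasing tuples $1\ge x_1\ge\dots\ge x_n\ge 0$: writing a point of $\Delta_n$ as $\sum_j t_j b_j$ with $t_j\ge 0$ and $\sum_j t_j=1$, the coordinates are $x_\ell=\sum_{j\ge\ell}t_j$, so that $t_0=1-x_1$, $t_j=x_j-x_{j+1}$ for $1\le j\le n-1$, and $t_n=x_n$. A point lies in $\langle b_{i_1},\dots,b_{i_k}\rangle$ exactly when $t_j=0$ for every $j\notin\{i_1,\dots,i_k\}$. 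The single observation that drives everything is this: a face omits $b_0$ precisely when $t_0=0$ is forced, i.e.\ $x_1=1$, and it omits $b_n$ precisely when $x_n=0$ is forced. Since $0$ and $1$ are identified in $S^1$, omitting either $b_0$ or $b_n$ forces the basepoint $0$ into the configuration $q(x_1,\dots,x_n)$, whereas a face containing both $b_0$ and $b_n$ produces, on its interior, a configuration whose points all lie in $S^1\setminus\{0\}$.

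Next I would count distinct points. On the interior of $\langle b_{i_1},\dots,b_{i_k}\rangle$ every chosen weight $t_{i_j}$ is strictly positive, so the non-increasing sequence $x_1\ge\dots\ge x_n$ jumps exactly at the interior indices $\{i_1,\dots,i_k\}\cap\{1,\dots,n-1\}$; if there are $m$ of these it takes $m+1$ distinct real values. For an \emph{extremal} face $0,n\in\{i_1,\dots,i_k\}$, so $m=k-2$, the $k-1$ values all lie in $(0,1)$, and the assignment of weights to the strictly decreasing tuple of values $(v_0,\dots,v_{k-2})$ is a triangular linear bijection whose form depends only on $k$, not on the chosen interior indices. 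Letting the weights range over the open simplex, $q$ maps the interior homeomorphically onto the set $E_k$ of all $(k-1)$-element subsets of $S^1\setminus\{0\}$, the same set for every extremal face. For a \emph{non-extremal} face at least one of $x_1=1$, $x_n=0$ holds, so the basepoint $0$ appears; the same counting shows the interior maps homeomorphically onto the set $N_k$ of all $k$-element subsets of $S^1$ containing $0$, again independently of the indices. The three sub-cases (omitting $b_0$, omitting $b_n$, or omitting both, in which case the forced values $1$ and $0$ coalesce to the single basepoint) all yield exactly $N_k$.

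Finally, since $E_k$ consists of subsets avoiding $0$ and $N_k$ of subsets containing $0$ (and the two have different cardinalities $k-1$ and $k$), we have $E_k\ne N_k$, so $q$ never identifies an extremal face with a non-extremal one; the previous step shows all extremal $k$-faces share the image $E_k$ and all non-extremal ones share $N_k$. Passing to closed faces only replaces these by $q(\sigma)=\overline{E_k}=C_{k-1}(S^1)$ and $\overline{N_k}=\{A:0\in A,\ |A|\le k\}$, which are still distinct, so the relation ``$q(\sigma)=q(\sigma')$'' on $k$-faces has exactly the two claimed classes (for $k=1$ there are no extremal faces and $N_1=\{\{0\}\}$ is the common image of all vertices). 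I expect the only delicate point to be verifying that $q$ restricted to the interior of a face is a homeomorphism onto $E_k$ or $N_k$, and in particular that the image is genuinely independent of the chosen interior indices: this is where one argues that the weight-to-value correspondence is invertible with continuous inverse (the decreasing-order sorting map) and that the identification $0\sim 1$ introduces no further coincidences among the interior values. Everything else is bookkeeping with the jump coordinates $t_j$.
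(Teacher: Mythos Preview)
Your proposal is correct and follows essentially the same approach as the paper: both compute $q(\sigma)$ by passing to standard coordinates $x_\ell=\sum_{j\ge\ell}t_j$ and then argue case-by-case that the resulting configuration depends only on whether $b_0$ and $b_n$ are both present. Your organizing principle of ``jump indices'' of the non-increasing tuple is a cleaner way to package the same computation the paper does in \eqref{qsigma}, and you add one point the paper leaves implicit: you explicitly verify $E_k\neq N_k$ (one consists of configurations avoiding the basepoint, the other of configurations containing it), so that there really are \emph{two} classes rather than at most two.
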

\begin{proof}
We will calculate the set $q(\sigma)$ for various $k$-faces $\sigma$. Let
\[
\sigma = \langle  b_{i_0},  b_{i_1},\dots, b_{i_{k}}\rangle.
\]
Then we have:
\[ \sigma = \left\{ \sum_{\ell=0}^k s_{i_\ell}  b_{i_\ell} \mid s_{i_\ell} \in [0,1],\;\sum_{\ell=0}^k s_{i_\ell} = 1 \right\}. \]
Because $ b_j =  e_1 + \dots +  e_j$, we can rearrage the sums above into standard coordinates to obtain:
\[
\sigma = \left\{ \sum_{j=1}^n \left( \sum_{i_m \ge j} s_{i_m} \right)  e_j \mid s_{i_\ell} \in [0,1],\;\sum_{\ell=0}^k s_{i_\ell} = 1 \right\}.
\]
Applying $q$ to the above turns these vectors into the sets of their coordinates:
\begin{equation}\label{qsigma}
q(\sigma)= \left\{ \left\{ \sum_{i_m \ge j} s_{i_m}\text{ mod }1 \mid j \in \{1,\dots, n\} \right\} \mid s_{i_\ell} \in [0,1],\;\sum_{\ell=0}^k s_{i_\ell} = 1 \right\}.
\end{equation}

When $\sigma$ is extremal, we have $i_0=0$ and $i_k =n$. In this case, the set appearing inside \eqref{qsigma} takes the form:
\[  \left\{ \sum_{i_m \ge j} s_{i_m} \text{ mod }1 \mid j \in \{1,\dots, n\} \right\} = \left\{\sum_{j=1}^k s_{i_j}, \sum_{j=2}^k s_{i_j}, \dots, s_{i_k} \right\} \text{mod } 1, \]
which is a set of $k$ elements. (Note that the parameter $s_{i_0}$ does not appear at all, since it is the coefficient on $ b_0 =  0$.) Renaming these parameters, we have $q(\sigma) = E$, where $E$ is the set:
\[ E = \left\{ \{ t_{1} + \dots + t_{k}, t_{2}+\dots + t_{k}, \dots, t_{k} \} \text{ mod }1 \mid t_\ell \in [0,1],\;\sum_{\ell=1}^k t_\ell = 1 \right\}. \]
We see that $E$ is independent of the choices of the indices $i_1,\dots,i_{k-1}$, and thus we will have $q(\sigma)=E$ for any extremal $k$-face $\sigma$.

Let $E'$ be the following set:
\[ E' = \left\{ \{ t_{1} + \dots + t_{k}, t_{2}+\dots + t_{k}, \dots, t_{k}, 0 \} \text{ mod }1 \mid t_\ell \in [0,1],\;\sum_{\ell=1}^k t_\ell = 1 \right\}. \]
We complete the proof by showing that any nonextremal $k$-face $\sigma$ has $q(\sigma)=E'$.

First assume that $\sigma$ is non-extremal because its vertices do not include $b_n$, but do include $b_0$. In this case again the $s_{i_0}$ parameter does not appear, and the set appearing inside \eqref{qsigma} takes the form:
\[  \left\{ \sum_{i_m \ge j} s_{i_m} \text{ mod }1 \mid j \in \{1,\dots, n\} \right\} = \left\{\sum_{j=1}^k s_{i_j}, \sum_{j=2}^k s_{i_j}, \dots, s_{i_k},0 \right\} \text{mod }1, \]
where the 0 appears because the sum $\sum_{i_m\ge n} s_{i_m}$ has no terms because $ b_n$ is not a vertex of $\sigma$. Renaming parameters as above, we see that $q(\sigma) = E'$ as desired.

Next we consider a $k$-face $\sigma$ which is non-extremal because its vertices do not include $b_0$, but do include $b_n$. In this case since $i_0 \ge 1$, the set appearing inside \eqref{qsigma} takes the form:
\[  \left\{ \sum_{i_m \ge j} s_{i_m} \text{ mod }1 \mid j \in \{1,\dots, n\} \right\} = \left\{\sum_{j=1}^k s_{i_j}, \sum_{j=2}^k s_{i_j}, \dots, s_{i_k} \right\}  \text{mod }1. \]
Since $s_{i_0} + \dots + s_{i_k} = 1$, the above becomes:
\[ \{ 1, s_{i_1}+\dots + s_{i_k}, \dots, s_{i_k} \} \text{ mod }1 = \{ s_{i_1}+\dots + s_{i_k}, \dots, s_{i_k}, 0 \} \text{ mod }1 \]
and so $q(\sigma) = E'$ in this case as well.

Finally we consider a $k$-face $\sigma$ which is non-extremal because its vertices do not include $b_0$ or $b_n$. In this case, for simiar reasons as above, the set appearing inside \eqref{qsigma} takes the form:
\[  \left\{ \sum_{i_m \ge j} s_{i_m} \text{ mod }1 \mid j \in \{1,\dots, n\} \right\} = \left\{1, \sum_{j=1}^k s_{i_j}, \sum_{j=2}^ks_{i_j}, \dots, s_{i_k}, 0 \right\} \text{ mod }1. \]
and since $1=0 \mod 1$, we have $q(\sigma)=E'$ as desired.

\end{proof}

The lemma means that in each dimension $k$ with $0<k<n$, we have exactly two $k$-simplices in $C_n(S^1)$: one represented by an extremal $k$-face, and one by a nonextremal $k$-face. In each dimension $k$, we will choose specific representatives:
\[
\begin{split}
\sigma_k &= q(\langle b_0, b_1, \dots, b_k \rangle) \\
\rho_k &= q(\langle b_0, b_1, \dots, b_{k-1}, b_n \rangle)
\end{split}
\]

Next we discuss the fundamental group of $C_n(S^1)$. For $n=1$ we may identify $C_1(S^1)$ with $S^1$, so we have $\pi_1(C_1(S^1)) \cong \Z$. For $n=2$, by Proposition \ref{samen=2} we may identify $C_2(S^1)=\SP^2(S^1)$. This space is known to be homotopy equivalent to $S^1$ (see \cite{blagrz2004}), and so we have $\pi_1(C_2(S^1)) \cong \Z$. For $C_n(S^1)$ with $n\ge 3$, we require new arguments.

\begin{lem}\label{edgeslemma}
For $n\ge 3$, there are two edges in the complex $C_n(S^1)$ given by $\sigma_1$ and $\rho_1$. The nonextremal edge can be parameterized by $\alpha(t)=\{0,t\}$ for $t\in [0,1]$, and the extremal edge can be parameterized by $\beta(t) = \{t\}$ for $t\in [0,1]$.
\end{lem}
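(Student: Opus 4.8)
The plan is to read the two edges directly off the description of the quotient map $q$ supplied by Lemma \ref{faceslemma}, and then convert the barycentric description of each chosen edge into an explicit subset of $S^1$ by passing through the standard coordinates of $\R^n$. First I would pin down the representatives. A $1$-face is extremal precisely when it contains both $b_0$ and $b_n$, and since a $1$-face has only two vertices, the unique extremal edge of $\Delta_n$ is $\langle b_0, b_n\rangle$, whose image is $\rho_1$; every other edge is non-extremal, and by Lemma \ref{faceslemma} all such edges are identified under $q$, so I may take $\langle b_0, b_1\rangle$ as representative, with image $\sigma_1$. This already establishes that $\sigma_1$ is the non-extremal edge and $\rho_1$ the extremal edge, as claimed.

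Next I would compute each parameterization. A point of $\langle b_0, b_1\rangle$ has the form $(1-t)b_0 + t\,b_1$ for $t\in[0,1]$; since $b_0 = 0$ and $b_1 = e_1 = (1,0,\dots,0)$, in standard coordinates this is the tuple $(t,0,\dots,0)$. Applying $q$, which reads a tuple as the subset of $S^1$ consisting of its coordinates, yields the set $\{t,0,\dots,0\} = \{0,t\}$, giving $\sigma_1 = \{\,\alpha(t) = \{0,t\} : t\in[0,1]\,\}$. Likewise, a point of $\langle b_0, b_n\rangle$ is $t\,b_n$, and since $b_n = e_1 + \dots + e_n = (1,\dots,1)$ this is the tuple $(t,t,\dots,t)$, which $q$ sends to the singleton $\{t\}$, giving $\rho_1 = \{\,\beta(t) = \{t\} : t\in[0,1]\,\}$.

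I do not expect a genuine obstacle here: the whole content is the coordinate bookkeeping induced by $b_k = e_1 + \dots + e_k$, together with mild care about the identification $0\sim 1$ in $S^1$. The one point I would check explicitly, because the subsequent fundamental-group computation depends on it, is that both maps are surjective loops based at the single vertex $\{0\}$ of the complex. Indeed every vertex $b_k$ is sent by $q$ to $\{0\}$ (as $b_k$ has coordinates in $\{0,1\}$ and $1\equiv 0$), so the complex has a unique $0$-cell, and one verifies $\alpha(0)=\alpha(1)=\{0\}$ and $\beta(0)=\beta(1)=\{0\}$, confirming that $\sigma_1$ and $\rho_1$ are loops at $\{0\}$.
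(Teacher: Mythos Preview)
Your proof is correct and follows essentially the same approach as the paper's: both invoke Lemma~\ref{faceslemma} to conclude there are exactly two edges, take the same representatives $\langle b_0,b_1\rangle$ and $\langle b_0,b_n\rangle$, and read off the parameterizations $(t,0,\dots,0)\mapsto\{0,t\}$ and $(t,\dots,t)\mapsto\{t\}$ directly from the coordinate description of $q$. Your additional verification that both edges are loops at the unique vertex $\{0\}$ is not in the paper's proof but is a useful observation for the subsequent fundamental-group argument.
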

\begin{proof}
Lemma \ref{faceslemma} shows that $\sigma_1$ and $\rho_1$ are the only edges in $C_n(S^1)$. We need only demonstrate that they have the parameterizations described above.
%
%

Consider the non-extremal edge $\sigma = \langle b_0,b_1\rangle$ of $\Delta_n$. This is the edge from $b_0 = (0,\dots,0)$ to $b_1=(1,0,\dots,0)$.
Then the standard parameterization of $\sigma$ is $a:[0,1] \to \Delta_n$ given by $a(t) = (t,0,\dots,0)$,
and so $\sigma_1 = q(\sigma)$ is parameterized by $\alpha = q\circ a$ given by:
\[ \alpha(t) = \{0,t\}. \]

For the extremal edge $\rho_1 = q(\langle b_0, b_n\rangle)$, the parameterization of $\langle b_0,b_n\rangle$ is given by $b(t) = (t, \dots, t)$,
and so $\rho_1$ is parameterized by $\beta(t) = \{t\}$ as desired.
\end{proof}

\begin{thm}\label{simplyconnected}
For $n\ge 3$, the fundamental group  $\pi_1(C_n(S^1))$ is trivial.
\end{thm}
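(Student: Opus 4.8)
The plan is to compute $\pi_1(C_n(S^1))$ from a presentation read off the $2$-skeleton, using the cellular structure furnished by the quotient $q:\Delta_n\to C_n(S^1)$ together with Lemma \ref{faceslemma}. Since the fundamental group of a CW-complex depends only on its $2$-skeleton, and since Lemma \ref{faceslemma} shows that for $n\ge 3$ there are exactly two cells in each of dimensions $1$ and $2$, I only need the cells $\sigma_1,\rho_1,\sigma_2,\rho_2$ together with the vertices. First I would check that $C_n(S^1)$ has a single $0$-cell: every vertex $b_i$ of $\Delta_n$ has all coordinates equal to $0$ or $1$, so $q(b_i)=\{0\}$ after reducing mod $1$, and hence all vertices are identified. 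Combined with Lemma \ref{edgeslemma}, this shows the $1$-skeleton is a wedge of two circles $\sigma_1\vee\rho_1$, so that $\pi_1$ of the $1$-skeleton is the free group $\langle a,b\rangle$ with $a=[\alpha]$ (the nonextremal loop $\alpha(t)=\{0,t\}$) and $b=[\beta]$ (the extremal loop $\beta(t)=\{t\}$).

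Next I would determine the two relators coming from the attaching maps of $\sigma_2$ and $\rho_2$. For this I trace each boundary edge of the defining triangle and identify its $q$-image among $\sigma_1,\rho_1$, keeping track of orientation. For $\sigma_2=q(\langle b_0,b_1,b_2\rangle)$, all three boundary edges are nonextremal, and evaluating $q$ on each edge (writing points in barycentric coordinates and reducing coordinates mod $1$) shows that traversing $b_0\to b_1\to b_2\to b_0$ yields the word $a\cdot a\cdot a^{-1}$; hence $\sigma_2$ contributes the relation $a=1$. For $\rho_2=q(\langle b_0,b_1,b_n\rangle)$, the edges $\langle b_0,b_1\rangle$ and $\langle b_1,b_n\rangle$ are nonextremal (mapping to $\alpha$) while $\langle b_0,b_n\rangle$ is extremal (mapping to $\beta$); traversing the boundary gives the word $a\cdot a\cdot b^{-1}$, so $\rho_2$ contributes $a^2=b$.

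Putting these together via van Kampen (equivalently, the standard $2$-complex presentation of $\pi_1$) I obtain
\[
\pi_1(C_n(S^1))=\langle a,b\mid a,\ a^2b^{-1}\rangle.
\]
The relation $a=1$ forces $a$ to be trivial, and then $b=a^2=1$, so both generators die and the group is trivial, as claimed.

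I expect the main obstacle to be the orientation bookkeeping in the attaching maps: one must evaluate $q$ explicitly along each edge of the triangles $\langle b_0,b_1,b_2\rangle$ and $\langle b_0,b_1,b_n\rangle$ to be sure which of $\alpha,\beta$ each edge traverses and in which direction, since a sign error in either relator would change the computed group. A secondary point to justify carefully is that the quotient structure on $C_n(S^1)$ is genuinely a CW-complex with the open cells $\sigma_k,\rho_k$ embedded, so that the $2$-skeleton presentation applies; this follows from the observation (recorded before Lemma \ref{faceslemma}) that distinct interior points of $\Delta_n$ represent distinct points of $C_n(S^1)$, applied to the relevant faces.
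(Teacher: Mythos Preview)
Your proposal is correct and follows essentially the same route as the paper: identify the $2$-skeleton using Lemma~\ref{faceslemma} and Lemma~\ref{edgeslemma}, read off the attaching words $\alpha\alpha\alpha^{-1}$ and $\alpha\alpha\beta^{-1}$ for $\sigma_2$ and $\rho_2$, and apply Van Kampen to obtain the presentation $\langle \alpha,\beta\mid \alpha,\ \alpha^2\beta^{-1}\rangle$, which is trivial. Your version is slightly more careful than the paper's in explicitly verifying that all vertices $b_i$ collapse to the single $0$-cell $\{0\}$ and in flagging the CW-structure and orientation checks, but the argument is otherwise identical.
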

\begin{proof}
We need only focus our attention on the edges and faces of $C_n(S^1)$. By Lemma \ref{faceslemma} there are two faces $\sigma_2,\rho_2$, and these can be represented in $\Delta_n$ by $A = \langle b_0,b_1,b_2\rangle$ and $B=\langle b_0,b_1,b_n\rangle$. These are pictured in Figure \ref{facesfig}, in which $a, b$ are as in Lemma \ref{edgeslemma} and $a^{-1}$ and $b^{-1}$ denote the reverse of the paths $\alpha$ and $\beta$.
\begin{figure}

\[
\begin{tikzpicture}

\foreach \theta in {-30, 90, 210} {
 \fill (\theta:2) circle (.05cm);
 }

\node[below left] at (210:2) {$b_0$};
\node[above] at (90:2) {$b_1$};
\node[below right] at (-30:2) {$b_2$};

\fill[fill=lightgray] (-30:2) -- (90:2) -- (210:2) -- cycle;

\begin{scope}[thick,decoration={
    markings,
    mark=at position 0.5 with {\arrow{>}}}
    ]
\draw[postaction={decorate}] (210:2) -- (90:2) node[pos=.5,above left] {$a$};
\draw[postaction={decorate}] (90:2) -- (-30:2) node[pos=.5,above right] {$a$};
\draw[postaction={decorate}] (-30:2) -- (210:2) node[pos=.5,below] {$a^{-1}$};
\end{scope}

\node at (0,0) {$A$};
\end{tikzpicture}
\qquad
\begin{tikzpicture}

\foreach \theta in {-30, 90, 210} {
 \fill (\theta:2) circle (.05cm);
 }

\node[below left] at (210:2) {$b_0$};
\node[above] at (90:2) {$b_1$};
\node[below right] at (-30:2) {$b_n$};

\fill[fill=lightgray] (-30:2) -- (90:2) -- (210:2) -- cycle;

\begin{scope}[thick,decoration={
    markings,
    mark=at position 0.5 with {\arrow{>}}}
    ]
\draw[postaction={decorate}] (210:2) -- (90:2) node[pos=.5,above left] {$a$};
\draw[postaction={decorate}] (90:2) -- (-30:2) node[pos=.5,above right] {$a$};
\draw[postaction={decorate}] (-30:2) -- (210:2) node[pos=.5,below] {$a^{-1}$};
\end{scope}

\node at (0,0) {$B$};
\end{tikzpicture}
\]
\caption{Representatives for the two faces of $C_n(S^1)$.\label{facesfig}}
\end{figure}
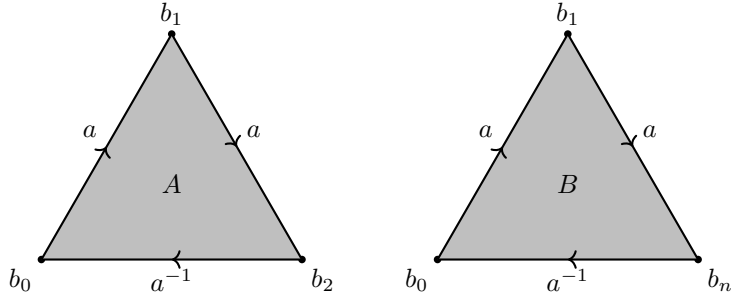

The loop traversing the boundary of $\sigma_1$ is given by $\alpha * \alpha * \alpha^{-1} \simeq \alpha$, and the loop traversing the boundary of $\sigma_2$ is given by $\alpha * \alpha * \beta^{-1}$. Thus by Van Kampen's theorem, we have the following presentation for the fundamental group:
\[ \pi_1(C_n(S^1)) = \langle \alpha, \beta \mid \alpha, \alpha^2\beta^{-1} \rangle. \]
From the second relation we can eliminate $\beta$ and we obtain  $\langle \alpha \mid \alpha \rangle$, and the result follows.
\end{proof}

For a complex $X$, let $S_k(X)$ denote the simplicial chain group of $X$ in dimension $k$.

Lemma \ref{faceslemma} means that for $0<k<n$, the group $S_k(C_n(S^1))$ has two generators $\sigma_k$ and $\rho_k$. In the top dimension $k=n$ we have a single $n$-simplex which we denote $\rho_n = q(\Delta_n)$. The next lemma computes the boundary maps:
\begin{lem}\label{boundarymaps}
With notations as above, for $0<k<n$, we have:
\[
\partial \sigma_k =  \begin{cases} \sigma_{k-1} &$ if $k$ is even,$ \\
0 &$ if $k$ is odd.$ \end{cases}
\qquad
\partial \rho_k =  \begin{cases} 2\sigma_{k-1} - \rho_{k-1} &$ if $k$ is even,$ \\
0 &$ if $k$ is odd.$ \end{cases} \]
And we have
\[ \partial \rho_n = \begin{cases} 2\sigma_{n-1} - \rho_{n-1} &$ if $n$ is even,$ \\
0 &$ if $n$ is odd.$ \end{cases} \]
\end{lem}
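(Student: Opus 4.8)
The plan is to compute each boundary by lifting to $\Delta_n$. Since $\sigma_k$, $\rho_k$ and $\rho_n$ are images under $q$ of explicit faces of $\Delta_n$, I would take the usual simplicial boundary of the representing face and push it forward by $q_*$, using Lemma \ref{faceslemma} to recognize each resulting $(k-1)$-face as either $\sigma_{k-1}$ or $\rho_{k-1}$. The only genuinely delicate point is orientation: Lemma \ref{faceslemma} identifies faces merely as subsets of $C_n(S^1)$, whereas the boundary map requires the sign with which each face is attached to its chosen representative.

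First I would record the orientation bookkeeping as a preliminary claim: if $F=\langle b_{j_0},\dots,b_{j_{k-1}}\rangle$ with $j_0<\dots<j_{k-1}$, then $q_*F=+\sigma_{k-1}$ when $F$ is non-extremal and $q_*F=+\rho_{k-1}$ when $F$ is extremal, where $\sigma_{k-1},\rho_{k-1}$ carry the orientations of their standard representatives. To prove this I would reuse the coordinate computation from the proof of Lemma \ref{faceslemma}: an interior point of $F$ with barycentric coordinates $s_{j_0},\dots,s_{j_{k-1}}$ maps under $q$ to the set of partial sums $\sum_{j_m\ge j}s_{j_m}\bmod 1$, and since these are strictly decreasing in $j$ they recover the parameters. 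Matching this description against the one for the representative shows that the resulting affine identification carries the $m$-th smallest vertex of $F$ to the $m$-th smallest vertex of the representative. Being an order-preserving vertex correspondence, it is orientation-preserving, so the attaching sign is $+1$.

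Granting the claim, the three formulas reduce to sign counts. For $\partial\sigma_k=q_*\partial\langle b_0,\dots,b_k\rangle=q_*\sum_{i=0}^k(-1)^i\langle b_0,\dots,\hat{b_i},\dots,b_k\rangle$, every omitted face still avoids $b_n$ (as $k<n$), hence is non-extremal and maps to $+\sigma_{k-1}$; thus $\partial\sigma_k=\big(\sum_{i=0}^k(-1)^i\big)\sigma_{k-1}$, which equals $\sigma_{k-1}$ for $k$ even and $0$ for $k$ odd. For $\partial\rho_k=q_*\partial\langle b_0,\dots,b_{k-1},b_n\rangle$ I would sort the $k+1$ faces by type: omitting $b_0$ gives a non-extremal face contributing $+\sigma_{k-1}$; omitting $b_i$ for $1\le i\le k-1$ gives an extremal face contributing $(-1)^i\rho_{k-1}$; and omitting the last vertex $b_n$ gives exactly the representative $\sigma_{k-1}$ with sign $(-1)^k$. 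Collecting terms gives $\sigma_{k-1}+\big(\sum_{i=1}^{k-1}(-1)^i\big)\rho_{k-1}+(-1)^k\sigma_{k-1}$, which simplifies to $2\sigma_{k-1}-\rho_{k-1}$ for $k$ even and to $0$ for $k$ odd.

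The top cell is handled by the identical count with $k=n$: in $\partial\rho_n=q_*\partial\langle b_0,\dots,b_n\rangle$, omitting $b_0$ or $b_n$ yields non-extremal faces each mapping to $+\sigma_{n-1}$, while omitting $b_i$ for $1\le i\le n-1$ yields extremal faces mapping to $+\rho_{n-1}$, producing $\sigma_{n-1}+\big(\sum_{i=1}^{n-1}(-1)^i\big)\rho_{n-1}+(-1)^n\sigma_{n-1}$ and hence $2\sigma_{n-1}-\rho_{n-1}$ for $n$ even and $0$ for $n$ odd. I expect the main obstacle to be the orientation claim of the second paragraph: once the face identifications are known to be orientation-preserving, the rest is merely evaluating $\sum(-1)^i$ over the relevant index ranges. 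A secondary point worth checking is that $q$ is injective on the interior of each representing face, which follows from the strict monotonicity of the partial sums used in the claim.
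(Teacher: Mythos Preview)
Your proposal is correct and follows essentially the same strategy as the paper: lift to $\Delta_n$, take the simplicial boundary of the representing face, push forward by $q$, and use Lemma~\ref{faceslemma} to identify each resulting $(k-1)$-face as $\sigma_{k-1}$ or $\rho_{k-1}$; the alternating sums $\sum(-1)^i$ then yield the stated formulas exactly as in the paper. The one place you go beyond the paper is the orientation discussion: the paper silently treats every face identification as $+1$, whereas you make explicit (and justify) that the $q$-identification of any ordered face $\langle b_{j_0},\dots,b_{j_{k-1}}\rangle$ with its chosen representative is order-preserving on vertices and hence orientation-preserving --- a point the paper leaves implicit but which is indeed needed for the boundary computation to be rigorous.
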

\begin{proof}
First we prove the statement for $\sigma_k$ and $\rho_k$ with $0<k<n$. We compute:
\[ \partial \sigma_k = \sum_{i=0}^k (-1)^i q(\langle b_0, \dots, \hat b_i, \dots, b_k \rangle) \]
where $\hat b_i$ denotes omission of this coordinate. The $(k-1)$-faces appearing inside the sum are all non-extremal because they all lack the vertex $b_n$. Thus we have
\[ \partial \sigma_k = \sum_{i=0}^k (-1)^i \sigma_{k-1}
= \begin{cases} \sigma_{k-1} &$ if $k$ is even,$ \\
0 &$ if $k$ is odd$ \end{cases}
\]
as desired.

Similarly we compute $\partial \rho_k$ as an alternating sum of various omissions of $\rho_k = q(\langle b_0, b_1, \dots, b_{k-1}, b_n \rangle)$. The first and last of these will be nonextremal, and the others will be extremal. We obtain:
\[ \partial \rho_k = \sigma_k + (-1)^k \sigma_k + \sum_{i=1}^{k-1} \rho_k = \begin{cases} 2\sigma_{k-1} - \rho_{k-1} &$ if $k$ is even,$ \\
0 &$ if $k$ is odd,$ \end{cases}
\]
as desired.

For the statement concerning $\rho_n$ we compute as above starting with $\rho_n = q(\langle b_0, \dots, b_n \rangle)$. Since this simplex includes both $b_0$ and $b_n$, the calculation proceeds exactly as in the case above for an extremal face. The desired formula follows.
\end{proof}

The calculations above allow us to compute the homology groups easily. The cases $n=1$ and $n=2$ are straightforward because $C_1(S^1)$ can be identified with $S^1$, and as discussed above $C_2(S^1)$ can be identified with $\SP^2(S^1)$ which is homotopy equivalent to $S^1$. Thus for $n \in \{1,2\}$ we have:
\[ \begin{split}
H_0(C_n(S^1)) &= H_1(C_n(S^1)) = \Z, \\
H_k(C_n(S^1)) &= 0 \text{ for $k\not\in \{0,1\}$.}
\end{split} \]

For $n\ge 3$ our result is as follows:
\begin{thm}\label{mainHom}
The homology groups of $C_n(S^1)$ are as follows. When $n$ is even, we have:
\[
\begin{split}
H_0(C_n(S^1)) &= H_{n-1}(C_n(S^1)) = \Z, \\
H_k(C_n(S^1)) &= 0 \text{ for $k\not\in \{0,n-1\}$}.
\end{split}
\]
When $n$ is odd, we have:
\[
\begin{split}
H_0(C_n(S^1)) &= H_n(C_n(S^1)) = \Z, \\
H_k(C_n(S^1)) &= 0 \text{ for $k\not\in \{0,n\}$.}
\end{split}
\]
That is, $C_n(S^1)$ is a homology $(n-1)$-sphere when $n$ is even, and a homology $n$-sphere when $n$ is odd.
\end{thm}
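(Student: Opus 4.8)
The plan is to compute directly from the simplicial chain complex already assembled in Lemmas \ref{faceslemma} and \ref{boundarymaps}. First I would record the chain groups. Since every vertex $b_j$ satisfies $q(b_j)=\{0\}$, there is a single $0$-simplex and so $S_0(C_n(S^1))=\Z$; Lemma \ref{faceslemma} gives $S_k(C_n(S^1))=\Z\langle \sigma_k\rangle\oplus\Z\langle\rho_k\rangle\cong\Z^2$ for $0<k<n$; and the top group is $S_n(C_n(S^1))=\Z\langle\rho_n\rangle\cong\Z$. The crucial structural feature, visible in Lemma \ref{boundarymaps}, is that every boundary map out of an odd degree vanishes, while every boundary map out of an even degree $k$ with $1<k<n$ is given, in the ordered bases $(\sigma_k,\rho_k)$ and $(\sigma_{k-1},\rho_{k-1})$, by the matrix $\left(\begin{smallmatrix}1 & 2\\ 0 & -1\end{smallmatrix}\right)$.

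Next I would exploit that this matrix has determinant $-1$, hence defines an isomorphism $\Z^2\to\Z^2$. Consequently, for each even $k$ with $0<k<n$ the map $\partial_k$ is injective, so $\ker\partial_k=0$ and $H_k=0$; and for each odd $k$ with $0<k<n$ and $k+1<n$ the map $\partial_{k+1}$ is surjective onto $S_k$, so $\operatorname{im}\partial_{k+1}=S_k=\ker\partial_k$ and again $H_k=0$. This disposes of every middle degree at once and, in particular, recovers $H_1=0$ in agreement with Theorem \ref{simplyconnected}. Degree $0$ is immediate: $\partial_1=0$ and $C_n(S^1)$ is connected, so $H_0=\Z$.

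Everything then reduces to the top two degrees, where the pattern breaks, and this is the step requiring the most care. When $n$ is odd, $\partial_n=0$, so $\ker\partial_n=S_n=\Z$ with nothing above it and $H_n=\Z$; since $n-1$ is then even, $\partial_{n-1}$ is injective and $H_{n-1}=0$, giving a homology $n$-sphere. When $n$ is even, $\partial_n$ sends $\rho_n\mapsto 2\sigma_{n-1}-\rho_{n-1}$, which is nonzero, so $\ker\partial_n=0$ and $H_n=0$; and since $n-1$ is odd we have $\ker\partial_{n-1}=S_{n-1}=\Z^2$, whence $H_{n-1}=\Z^2/\langle 2\sigma_{n-1}-\rho_{n-1}\rangle$. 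The hard part is precisely this quotient: the point to verify is that $(2,-1)$ is a primitive vector in $\Z^2$ (its entries are coprime), so it extends to a basis and the quotient is free of rank one, giving $H_{n-1}=\Z$ and a homology $(n-1)$-sphere. I would finish by collecting the displayed formulas for the two parities.
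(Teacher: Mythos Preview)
Your proposal is correct and follows essentially the same approach as the paper's proof: both compute directly from the simplicial chain complex set up in Lemmas~\ref{faceslemma} and~\ref{boundarymaps}, observe that $\partial_k$ vanishes for odd $k$ and is an isomorphism $\Z^2\to\Z^2$ for even $k$ in the middle range, and then handle the top two degrees separately with the primitivity argument for $2\sigma_{n-1}-\rho_{n-1}$ in the even case. Your version is slightly more explicit (writing the matrix $\left(\begin{smallmatrix}1&2\\0&-1\end{smallmatrix}\right)$ and its determinant, and spelling out that $(2,-1)$ extends to a basis), but the argument is the same.
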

\begin{proof}
Lemma \ref{boundarymaps} shows that, for $0<k < n$, the boundary map
\[
\partial_k:S_k(C_n(S^1)) \to S_{k-1}(C_n(S^1))
\]
is an isomorphism when $k$ is even, and 0 when $k$ is odd. This means that $H_0(C_n(S^1)) = \Z$, and $H_k(C_n(S^1)) = 0$ for $k<n-1$.

It remains to consider dimensions $n$ and $n-1$, in which the chain complex is:
\[
0 \xrightarrow{\partial_{n+1}} S_n(C_n(S^1)) \xrightarrow{\partial_n} S_{n-1}(C_n(S^1)) \xrightarrow{\partial_{n-1}} \dots
\]
where $S_n(C_n(S^1)) \cong \Z$ and $S_{n-1}(C_n(S^1)) \cong \Z^2$.

If $n$ is even, then $\partial_n$ is injective, so $H_n(C_n(S^1)) = 0$. The calculations of Lemma \ref{boundarymaps} show that $\im(\partial_n) \cong \Z$,  
$\ker(\partial_{n-1}) = S_{n-1}(C_n(S^1)) = \Z^2$ because $\partial_{n-1}=0$, and $\im(\partial_n)$   is a primitive subgroups of   $\Z^2$. So follows  that $H_{n-1}(C_n(S^1)) = \Z$.

If $n$ is odd then $\partial_n$ is zero while  $\partial_{n-1}$ is isomorphism. This means that $H_n(C_n(S^1)) = S_n(C_n(S^1)) = \Z$, and $H_{n-1}(C_n(S^1)) = 0$.


\end{proof}

Using the Hurewicz and Whitehead theorems, the result above will show that $C_n(S^1)$ in fact has the homotopy type of the appropriate spheres.

\begin{cor} The  space   $C_n(S^1)$ has the homotopy type of $S^1$ for $n=1,2$. For  $n\geq 3$ it  has
 the homotopy type of  $n$-sphere when $n$ is odd, and the  homotopy type of     $(n-1)$-sphere when $n$ is even.
\end{cor}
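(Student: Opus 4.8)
The plan is to read off the homotopy type directly from the homology computation of Theorem~\ref{mainHom} together with the simple connectivity of Theorem~\ref{simplyconnected}, using the Hurewicz and Whitehead theorems. The cases $n=1,2$ are immediate and require no new work: $C_1(S^1)$ is identified with $S^1$, and $C_2(S^1)\cong \SP^2(S^1)$ is homotopy equivalent to $S^1$ (cited from \cite{blagrz2004}). So the real content is the case $n\geq 3$, which I would handle uniformly by introducing the notation $m=n$ when $n$ is odd and $m=n-1$ when $n$ is even, so that Theorem~\ref{mainHom} reads $\tilde H_i(C_n(S^1))=0$ for $i<m$ and $H_m(C_n(S^1))\cong \Z$. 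Note that $m\geq 3$ in every case, so all connectivity hypotheses below are met.

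First I would invoke Theorem~\ref{simplyconnected} to record that $C_n(S^1)$ is simply connected for $n\geq 3$. Since the reduced homology vanishes below degree $m$, the Hurewicz theorem in its form for simply connected spaces then shows that $C_n(S^1)$ is in fact $(m-1)$-connected and that the Hurewicz homomorphism
\[
\pi_m(C_n(S^1))\to H_m(C_n(S^1))\cong \Z
\]
is an isomorphism. This is the step that upgrades vanishing \emph{homology} below degree $m$ into vanishing \emph{homotopy} below degree $m$, which is exactly what is needed to produce a comparison map out of a sphere.

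Next I would choose a map $g\colon S^m\to C_n(S^1)$ representing a generator of $\pi_m(C_n(S^1))\cong\Z$. By naturality of the Hurewicz homomorphism applied to $g$, the class $g$ is sent to a generator of $H_m(C_n(S^1))$; tracing the fundamental class of $S^m$ through the commuting square shows that $g_*\colon H_m(S^m)\to H_m(C_n(S^1))$ is an isomorphism. Because both $S^m$ and $C_n(S^1)$ have reduced integral homology concentrated in the single degree $m$, where it is $\Z$, the map $g$ induces isomorphisms on \emph{all} homology groups. Both spaces are simply connected CW-complexes, so the homology form of the Whitehead theorem applies and $g$ is a homotopy equivalence. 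Hence $C_n(S^1)\simeq S^m$, which is $S^n$ when $n$ is odd and $S^{n-1}$ when $n$ is even, as claimed.

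I do not expect a serious obstacle: this is a standard ``homology sphere that is simply connected is a homotopy sphere'' argument. The only points demanding care are the application of the simply connected Hurewicz theorem to promote homological connectivity to homotopical connectivity, and the verification that $g$ is a homology isomorphism in the top degree via naturality of Hurewicz rather than through any explicit description of $g$ itself.
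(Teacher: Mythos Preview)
Your proposal is correct and follows essentially the same route as the paper's own proof: handle $n=1,2$ directly, then for $n\ge 3$ use Theorem~\ref{simplyconnected} together with Theorem~\ref{mainHom} and the Hurewicz theorem to produce a map from the appropriate sphere inducing homology isomorphisms, and conclude via Whitehead. Your write-up is in fact slightly more careful than the paper's in justifying that the chosen map is a homology isomorphism in the top degree (via naturality of the Hurewicz homomorphism), but the argument is the same.
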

\begin{proof}
Let $k=n-1$ if $n$ is even, and $k=n$ if $n$ is odd. We will show that $C_n(S^1)$ has the homotopy type of $S^k$.
The statement for $n=1$ is clear because $C_1(S^1)=S^1$. For $n=2$, as mentioned above, by Proposition \ref{samen=2} we may identify $C_2(S^1)$ with $\SP^2(S^1)$, which is known to be homotopy equivalent to $S^1$ (see \cite{blagrz2004}). So it remains to consider the case $n\ge 3$.

Theorem \ref{simplyconnected} has already shown that $C_n(S^1)$ is simply connected when $n\ge 3$. By the Hurewicz theorem, the first nonzero homotopy and homology groups must occur in the same dimension, and they are isomorphic.
By Theorem \ref{mainHom} this means that $\pi_i(C_n(S^1)) = 1$ for $i<k$ and $\pi_k(C_n(S^1)) = \Z = \pi_k(S^k)$.

Let $f:S^k \to C_n(S^1)$ be a map which realizes the isomorphism in the $k$th homotopy group. In fact this $f$ induces an isomorphism in homology in all dimensions, because we have already shown that the homology in all other dimensions is zero.  Then by the Whitehead theorem, $f$ gives a homotopy equivalence of $S^k$ and $C_n(S^1)$ as desired.
\end{proof}

\end{document}